\documentclass[10pt]{article}

\usepackage{geometry}
\usepackage{srcltx,graphicx}
\usepackage{amsmath,amssymb,amsthm,amscd,mathrsfs,bm,stmaryrd}
\usepackage[unicode,colorlinks,linkcolor=blue,hyperindex]{hyperref}
\usepackage{indentfirst} 
\usepackage{enumerate} 
\usepackage{cases} 
\usepackage[mathscr]{euscript}
\usepackage{url,breakurl}
\usepackage{wasysym}
\usepackage{algorithm}
\usepackage{algpseudocode}
\usepackage{tikz-cd}
\usepackage{arydshln}
\usepackage{colortbl}

\usepackage{yhmath}
\usepackage{extarrows}
\usepackage{booktabs}

\usepackage[T3,T1]{fontenc}
\DeclareSymbolFont{tipa}{T3}{cmr}{m}{n}
\DeclareMathAccent{\invbreve}{\mathalpha}{tipa}{16}

\usepackage{subcaption}
\makeatletter
\renewcommand{\p@subfigure}{\thefigure}
\makeatother

\numberwithin{equation}{section} \numberwithin{figure}{section} \numberwithin{table}{section}

{\theoremstyle{remark} \newtheorem{remark}{Remark}}
\newtheorem{theorem}{Theorem}[section]
\newtheorem{corollary}[theorem]{Corollary}
\newtheorem{lemma}[theorem]{Lemma}

\def\XXint#1#2#3{{\setbox0=\hbox{$#1{#2#3}{\int}$ }
\vcenter{\hbox{$#2#3$ }}\kern-.6\wd0}}

\makeatletter
\def\widebreve{\mathpalette\wide@breve}
\def\wide@breve#1#2{\sbox\z@{$#1#2$}%
     \mathop{\vbox{\m@th\ialign{##\crcr
\kern0.08em\brevefill#1{0.8\wd\z@}\crcr\noalign{\nointerlineskip}%
                    $\hss#1#2\hss$\crcr}}}\limits}
\def\brevefill#1#2{$\m@th\sbox\tw@{$#1($}%
  \hss\resizebox{#2}{\wd\tw@}{\rotatebox[origin=c]{90}{\upshape(}}\hss$}
\makeatletter

\newcommand{\triplenorm}[1]{\ensuremath{|\!|\!| #1 |\!|\!|}}

\newcommand{\curl}{{\rm curl}}
\newcommand{\rot}{{\rm rot}}
\renewcommand{\div}{{\rm div}}
\newcommand{\divg}{{\rm div}_\gamma}
\newcommand*{\dif}{\mathop{}\!\mathrm{d}}

\newcommand{\tr}{{\rm tr}}

\newcommand{\bz}{{\boldsymbol z}}
\newcommand{\bn}{{\boldsymbol n}}
\newcommand{\bt}{{\boldsymbol t}}
\newcommand{\bv}{{\boldsymbol v}}

\newcommand{\bP}{{\mathbf{P}}}
\newcommand{\bA}{{\mathbf{A}}}
\newcommand{\bH}{{\mathbf{H}}}
\newcommand{\bnu}{{\bm{\nu}}}
\newcommand{\Hg}{{\mathrm{H}_\gamma}}
\newcommand{\HG}{{\mathrm{H}_{\Gamma_h}}}
\newcommand{\nbg}{{\nabla_\gamma}}
\newcommand{\nbG}{{\nabla_{\Gamma_h}}}
\newcommand{\bp}{{\boldsymbol p}}
\newcommand{\bq}{{\boldsymbol q}}

\begin{document} 
\title{Stabilized Morley FEM for Surface Stokes in Stream-Function Formulation: Optimal Convergence via a New Geometric Estimate}
\author{
Shuonan Wu\thanks{Corresponding author. School of Mathematical Sciences,
Peking University, Beijing 100871, China
(\href{mailto:snwu@math.pku.edu.cn}{snwu@math.pku.edu.cn}).}
\and
Hao Zhou\thanks{School of Mathematical Sciences, Peking University,
Beijing 100871, China
(\href{mailto:zhouhao23@pku.edu.cn}{zhouhao23@pku.edu.cn}).}
}
\date{}
\maketitle

\begin{abstract}
We propose an intrinsic stabilized Morley finite element method for the stream-function formulation of the surface Stokes problem on closed surfaces. The method is posed directly on a polyhedral approximation of the surface. A parameter-free jump stabilization is introduced to recover coercivity, and a discrete Korn's inequality for the trace-free Hessian is established.

The main analytical contribution is a normal-separated geometric estimate for piecewise linearly approximated closed surfaces. It shows that a broad class of normal-dependent geometric consistency errors is in fact second order, even though a direct treatment suggests only first-order control. The missing order is recovered through an integral cancellation in the first normal variation. This estimate refines the standard $\bP_h\bnu$-type estimate and yields second-order consistency for the trace-free Hessian, the Stokes-type tensor Green identity, and the conormal fluxes arising in the discretization. Together with the discrete Korn's inequality, these estimates yield, under the natural $H^3$ regularity, optimal-order convergence: first order in the broken $H^2$ norm and second order in the broken $H^1$ norm. As a direct consequence, the recovered tangential velocity admits a second-order $L^2$ estimate. Numerical experiments are provided to support the theoretical results.
\end{abstract}

\medskip
\noindent\textbf{Keywords:} Surface Stokes, stream-function formulation, Morley element.

\medskip
\noindent\textbf{MSC Classification:} 65N12, 65N15, 65N30.


\section{Introduction}\label{sec:intro}
The surface Stokes equations describe stationary incompressible viscous flow constrained to a smooth surface \cite{jankuhn2018incompressible}. Their finite element approximation combines the velocity--pressure stability issues of the planar Stokes problem with geometric constraints: the discrete velocity must represent tangential flow on an approximated surface, and the differential operators themselves depend on the surface geometry. Surface finite element methods (SFEMs) and trace finite element methods (TraceFEMs) provide standard fitted and unfitted frameworks for these problems \cite{dziuk2013finite}.

Most numerical methods retain velocity and pressure as the primary unknowns. TraceFEM and parametric SFEM schemes based on ambient $H^1$-conforming vector spaces impose tangentiality weakly \cite{olshanskii2018finite,jankuhn2021error,brandner2022finite,hardering2024parametric}. Other constructions enforce tangentiality strongly: surface $H(\div)$-conforming and discontinuous spaces are used in \cite{bonito2020divergence,lederer2020divergence}, while penalty-free nodal MINI and Taylor--Hood spaces are developed in \cite{demlow2024tangential,demlow2025taylor} by Demlow and Neilan. Nochetto and Shakipov retain the velocity--pressure variables but replace the saddle-point formulation by an elliptic system that avoids the usual discrete inf--sup condition \cite{nochetto2025surface}. These approaches discretize the original vector system and provide velocity and pressure directly.

On a simply connected closed surface, a different formulation is available. Every tangential divergence-free velocity has a unique zero-mean stream function $\phi$ such that $\bm u=\curl_\gamma\phi$; testing the momentum equation with surface curl then eliminates the pressure \cite{reusken2020stream}. The resulting scalar problem is
\begin{equation} \label{eq:s-streamStokes}
\frac{1}{2}\Delta_\gamma^2 \phi +\div_\gamma((K-1)\nabla_\gamma \phi)  = -\rot_\gamma \bm{f} \quad \text{on }\gamma, \quad \int_\gamma \phi \,\mathrm{d}\sigma = 0,
\end{equation}
where $\Delta_\gamma$ is the Laplace--Beltrami operator and $K$ is the Gaussian curvature. The simple-connectivity assumption is essential for this representation: on surfaces of nontrivial topology, harmonic velocity components must also be included \cite{reusken2020stream,brueers2026releasing}. 

The stream-function formulation removes both the pressure and the divergence constraint, but replaces the vector saddle-point problem by a scalar fourth-order equation. Existing discretizations of this formulation either split it into two coupled second-order scalar equations \cite{reusken2020stream,brandner2020finite} or treat the fourth-order problem directly with a $C^0$ interior penalty method \cite{neilan2024c}. A systematic numerical comparison with velocity--pressure discretizations is given in \cite{brandner2022finite}. More broadly, standard primal Galerkin discretizations of fourth-order surface PDEs would require $H^2$-conforming (that is, $C^1$) finite element spaces, which cannot be constructed on a Lipschitz polyhedral surface. This has motivated $C^0$-type primal schemes \cite{larsson2017continuous,cai2024continuous}, second-order splitting and mixed formulations \cite{elliott2019second,stein2019mixed}, and the surface Hellan--Herrmann--Johnson method \cite{walker2022kirchhoff}.

This paper develops a stabilized primal nonconforming discretization of \eqref{eq:s-streamStokes} based on the surface Morley element \cite{morley1968triangular,wang2006morley}. Related stabilized nonconforming methods for the surface biharmonic equation have been developed using Zienkiewicz-type elements \cite{wu2025stabilized}. A distinguishing advantage of the Morley construction is its intrinsic character: for a fixed triangulation, the method is determined entirely by the edge lengths of its triangles and does not require knowledge of how the discrete surface is embedded in three-dimensional space. A related perspective is discussed in \cite{Yakov2025blowup}. The discrete bilinear form uses no information about the Gaussian curvature of the exact surface $\gamma$, nor any numerical approximation of it, and includes a fixed value-jump stabilization with no tunable penalty parameter.

The main analytical challenge concerns geometric consistency. Although a piecewise linear surface approximates positions and areas with second-order accuracy, its normal is only first-order accurate pointwise \cite{dziuk2013finite,demlow2009higher}. Consequently, a direct treatment of normal-dependent terms gives only first-order consistency. For surface Stokes in the stream-function formulation, expressed through a $\bP_h\bnu$-type estimate, this consistency error improves a geometric error bound of order $\mathcal{O}(h^{k_g})$ to
$\mathcal{O}\bigl(h^{\min\{k_g+1,\,2k_g-1\}}\bigr)$ (cf.~\cite{neilan2024c}),
where $k_g$ is the degree of the geometric approximation.
At the lowest order, $k_g=1$, corresponding to a piecewise linear surface, this bound for the broken $H^1$ error nevertheless remains only first order. For the more tractable surface biharmonic problem, whose variational form involves the Laplacian rather than the trace-free Hessian, Larson and Larsson \cite{larsson2017continuous} recover second-order geometric consistency, but at the expense of additional $H^4$ regularity of the solution. Through a duality argument, however, this only delivers a second-order $L^2$ estimate and does not reach the broken $H^1$ norm.

The main analytical contribution is to overcome this remaining barrier through a new {\it normal-separated geometric consistency} argument. Surface differential operators can be expressed in terms of ambient derivatives and the surface normal, so their evaluation on the discrete surface can be interpreted as a perturbation of the normal variable. A Taylor expansion with respect to this variable reveals an integral cancellation in the first-order term, thereby recovering the missing order. This viewpoint provides a common framework that contains the well-known $\bP_h\bnu$ estimate as a special case. Applying the argument sharpens the estimates obtained in earlier analyses and yields the optimal convergence rates: first-order convergence in the broken $H^2$ norm and second-order convergence in the broken $H^1$ norm. As a direct consequence, the recovered velocity $\bm u_h=\curl_{\Gamma_h}\phi_h$ admits a second-order $L^2$ estimate, all under the minimal $H^3$ regularity assumption.

The remainder of the paper is organized as follows. Section~\ref{sec:preliminaries} introduces the surface Stokes problem, its stream-function formulation, and the geometric setting. Section~\ref{sec:geo-error} establishes the normal-separated geometric estimate and its vector, tensor, and conormal-flux consequences. Section~\ref{sec:FEM} defines the surface Morley space, the surface Piola interpolation, and the stabilized method. Section~\ref{sec:analysis} proves the broken-$H^2$ and broken-$H^1$ error estimates and derives the velocity $L^2$ estimate. Section~\ref{sec:numerical} presents the numerical experiments. Section~\ref{sec:conclusion} concludes with a discussion of the geometric estimate.

We shall use $X \lesssim Y$ (resp. $X \gtrsim Y$) to denote $X \leq CY$ (resp. $X \geq CY$), where $C$ is a constant independent of the mesh size $h$. We write $X \eqsim Y$ when both $X \lesssim Y$ and $X \gtrsim Y$ hold.

\section{Notation and preliminaries} \label{sec:preliminaries}

In this paper, we assume that $\gamma \subset \mathbb{R}^3$ is a compact, closed, orientable surface with regularity at least $C^2$. For sufficiently small $\delta>0$, there exists a tubular neighborhood
$
U_\delta := \{ x \in \mathbb{R}^3 : \mathrm{dist}(x,\gamma) < \delta \},
$
in which the signed distance function $d$ is well-defined, with $d>0$ outside $\gamma$. The unit outward normal is given by $\bm{\nu} := \nabla d$, and the Weingarten map by $\mathbf{H} := \nabla^2 d$. The closest point projection $\bm{p}: U_\delta \to \gamma$ is defined by
$
\bm{p}(x) := x - d(x)\nabla d(x).
$
An important property is that $\bnu(x) = \bnu(\bm{p}(x))$ for $x \in U_\delta$ \cite[eq. (24)]{bonito2020finite}.
The tangential projection is $\mathbf{P} := \mathbf{I} - \bm{\nu} \otimes \bm{\nu}$, where $\mathbf{I}$ is the $3\times 3$ identity matrix. Consequently,
$
\nabla \bm{p} = \mathbf{P} - d\,\mathbf{H}.
$

\subsection{Differential operators and function spaces}
For a scalar function \(\psi\) on \(\gamma\), let $\psi^e$ denote a smooth extension of $\psi$ to $U_\delta$ (for instance $\psi^e:=\psi\circ\bm{p}$); the surface operators defined below are independent of the particular extension, since they only involve tangential derivatives. The tangential gradient is \(\nabla_\gamma \psi := \mathbf{P}\nabla \psi^e\),  
and the surface curl is \(\curl_\gamma \psi := \bm{\nu}\times\nabla_\gamma \psi\).
For a vector field \(\bm{g}=(g_1,g_2,g_3)^\top\), define \(\nabla\bm{g}^e:=(\nabla g_1^e,\nabla g_2^e,\nabla g_3^e)^\top\). Its surface Jacobian is \(\nabla_\gamma \bm{g}:=\mathbf{P}\nabla\bm{g}^e\mathbf{P}\). The surface deformation tensor is defined as
\[
\mathrm{E}_\gamma \bm{g} = \frac{1}{2}(\nabla_\gamma \bm{g}+(\nbg \bm{g})^\top).
\]

The surface divergence of a vector field is defined by \(\div_\gamma \bm{g}:=\mathrm{tr}(\nabla_\gamma \bm{g})\), and for a matrix field  $\bm{\tau}=(\bm{\tau}_1,\bm{\tau}_2,\bm{\tau}_3)^\top$, \(\div_\gamma \bm{\tau}:=(\div_\gamma\bm{\tau}_1,\div_\gamma\bm{\tau}_2,\div_\gamma\bm{\tau}_3 )^\top \). The scalar curl is \(\rot_\gamma \bm{g}:=\div_\gamma(\bm{g}\times\bm{\nu})\).  
The Laplace--Beltrami operator is \(\Delta_\gamma \psi:=\div_\gamma\nabla_\gamma \psi\), and the surface Hessian is \(\nabla_\gamma^2 \psi:=\nabla_\gamma(\nabla_\gamma \psi)\).
Define the trace-free Hessian operator as
\[
\Hg \psi := \mathrm{E}_\gamma(\curl_\gamma \psi).
\]
Its relation to the surface Hessian (cf.~\cite[(2.7)]{neilan2024c}) is
\begin{equation}\label{eq:H-Hessian}
\Hg \psi = \tfrac{1}{2}\bigl(\bnu^\times \nabla_\gamma^2 \psi - \nabla_\gamma^2 \psi\, \bnu^\times\bigr)=\text{sym}(\bnu^\times \nabla_\gamma^2 \psi),
\end{equation}
where $\bnu^\times$ denotes the skew-symmetric tensor defined by its action $\bnu^\times \cdot := \bnu \times \cdot$ for any vector or matrix. 


\smallskip
The standard notation $W_q^m(\gamma)$ is used for the Sobolev space of order $m$ and exponent $q$ on $\gamma$, equipped with the seminorm $|\cdot|_{W_q^m(\gamma)}$ and the norm $\|\cdot\|_{W_q^m(\gamma)}$. When $m=0$, this space is denoted by $L^q(\gamma)$. In particular, we define the Hilbert space $H^m(\gamma):=W_2^m(\gamma)$ and denote the $L^2$ inner product on $\gamma$ by $(\cdot,\cdot)_\gamma$. Similar notation applies to any subdomain of $\gamma$.
%
%
Let $\mathring{W}_q^m(\gamma)$ denote the subspace of $W_q^m(\gamma)$ consisting of functions with zero mean. In particular, $\mathring{L}^2(\gamma)$ denotes the subspace of $L^2(\gamma)$ consisting of functions with vanishing mean value. 
For any domain $D$ considered below, we write $\bm{W}_q^m(D):=[W_q^m(D)]^3$,
with the componentwise norm, and set $\bm{H}^m(D):=\bm{W}_2^m(D)$ and
$\bm{L}^q(D):=\bm{W}_q^0(D)$.
%
We also define the space
$$
\bm{H}({\rm div}_\gamma; \gamma) = \{\bm{g} \in \bm{L}^2(\gamma):~ {\rm div}_\gamma \bm{g} \in L^2(\gamma)\}.
$$


\subsection{Surface Stokes problem}
The surface Stokes problem seeks the fluid velocity $\bm{u} : \gamma \to \mathbb{R}^3$ satisfying $\bm{u} \cdot \bm{\nu} = 0$ and the surface fluid pressure $p : \gamma \to \mathbb{R}$ such that
\begin{subequations} \label{eq:strong_stokes}
\begin{align}
-\mathbf{P} \div_\gamma(\mathrm{E}_\gamma(\bm{u})) + \bm{u} + \nabla_\gamma p &= \bm{f} \quad \text{on } \gamma, \label{eq:strong_stokes_a}\\
\div_\gamma \bm{u} &= 0 \quad \text{on } \gamma, \label{eq:strong_stokes_b}
\end{align}
\end{subequations}
where $\bm{f} \in \bm{L}^2(\gamma)$ with $\bm{f} \cdot \bm{\nu} = 0$ is a given applied force vector.

Assuming the surface $\gamma$ is \textit{simply connected}, there exists a unique stream function $\phi \in \mathring{H}^2(\gamma)$ such that $\bm{u} = \curl_\gamma \phi$. To derive the variational formulation, we take the inner product of \eqref{eq:strong_stokes_a} with a test function $\bm{v} = \curl_\gamma \psi$ for $\psi \in \mathring{H}^2(\gamma)$. The pressure term vanishes due to $\div_\gamma(\curl_\gamma \psi) = 0$. Applying integration by parts to the viscous term yields
\begin{equation*}
(-\mathbf{P} \div_\gamma(\mathrm{E}_\gamma(\bm{u})), \curl_\gamma \psi)_\gamma = (\mathrm{E}_\gamma(\bm{u}), \nabla_\gamma \curl_\gamma \psi)_\gamma = (\Hg \phi, \Hg \psi)_\gamma.
\end{equation*}
The lower-order term simplifies to $(\bm{u}, \curl_\gamma \psi)_\gamma = (\nabla_\gamma \phi, \nabla_\gamma \psi)_\gamma$.

Consequently, the weak formulation of the surface Stokes problem seeks $\phi \in \mathring{H}^2(\gamma)$ such that
\begin{equation} \label{eq:weak_form}
a(\phi, \psi) = l(\psi) \quad \forall \psi \in \mathring{H}^2(\gamma),
\end{equation}
where the bilinear form $a(\cdot, \cdot)$ and the linear functional $l(\cdot)$ are defined as
\begin{align}
a(\phi, \psi) &:= (\Hg \phi, \Hg \psi)_\gamma + (\nabla_\gamma \phi, \nabla_\gamma \psi)_\gamma, \label{eq:bilinear_form}\\
l(\psi) &:= (\bm{f}, \curl_\gamma \psi)_\gamma. \label{eq:linear_functional}
\end{align}
The well-posedness of \eqref{eq:weak_form} follows from the Lax--Milgram lemma. In particular, the coercivity on $\mathring{H}^2(\gamma)$ is guaranteed by the surface Korn's inequality for $\curl_\gamma \phi$ and the Poincaré inequality. 

Furthermore, we can explicitly connect the weak form \eqref{eq:weak_form} to a scalar fourth-order PDE. Recall the surface differential identity \cite[(2.13)]{reusken2020stream}:
\begin{equation*}
\mathbf{P} \div_\gamma(\mathrm{E}_\gamma(\bm{u})) =  \nabla_\gamma \div_\gamma \bm{u} + K \bm{u}+\frac{1}{2} \curl_\gamma (\rot_\gamma \bm{u}),
\end{equation*}
where $K$ is the Gaussian curvature of $\gamma$. For sufficiently smooth functions $\phi$ and $\psi$, applying this identity and performing integration by parts allows us to rewrite the bilinear form and the linear functional as
\begin{equation} \label{eq:l_continue}
\begin{aligned}
a(\phi,\psi) &= - (\div_\gamma (\Hg \phi), \curl_\gamma \psi)_\gamma + (\nabla_\gamma \phi, \nabla_\gamma \psi)_\gamma \\
&= ( \frac{1}{2}\Delta_\gamma^2 \phi +\div_\gamma(K\nabla_\gamma \phi) - \Delta_\gamma \phi, \psi  )_\gamma, \\
l(\psi) &= (-\rot_\gamma \bm{f}, \psi)_\gamma.
\end{aligned}
\end{equation}
Equating these two terms yields the strong form of the surface stream-function equation \eqref{eq:s-streamStokes}.


Regarding the regularity of the solution, on a $C^4$ surface $\gamma$, employing a partition of unity together with interior elliptic regularity estimates yields the bound
\begin{equation} \label{eq:regularity}
\|\phi\|_{H^{3}(\gamma)} \lesssim \|\rot_\gamma \bm{f}\|_{H^{-1}(\gamma)} \lesssim \|\bm{f}\|_{L^2(\gamma)} .
\end{equation}
This regularity result can also be found in \cite[Lemma 6.2]{neilan2024c}.

\subsection{Discretization}
Let $\Gamma_h$ be a polyhedral surface approximation of $\gamma$ composed of shape-regular, quasi-uniform triangular faces $\mathcal{T}_h$ with mesh size $h := \max_{K \in \mathcal{T}_h} \mathrm{diam}(K)$. Assume $h$ is sufficiently small such that $\Gamma_h \subset U_\delta$ and the closest point projection $\bm{p}: \Gamma_h \to \gamma$ is bijective and yields an $\mathcal{O}(h^2)$ geometric approximation. The projected exact surface elements and the corresponding mesh are denoted by
$
K^\gamma := \bm{p}(K)$, and $\mathcal{T}_h^\gamma := \{K^\gamma: K \in \mathcal{T}_h\}.
$

Let $\mathcal{V}_h$ and $\mathcal{E}_h$ denote the sets of all vertices and edges in $\mathcal{T}_h$. For a given $K \in \mathcal{T}_h$, its vertices and edges are denoted by $\mathcal{V}_K$ and $\mathcal{E}_K$. For an interior edge $e = \partial K_1 \cap \partial K_2 \in \mathcal{E}_h$, the discrete and exact edge patches are defined as $\omega_e := K_1\cup K_2$ and $\omega_{e^\gamma} := K_1^\gamma \cup K_2^\gamma$, respectively.

The piecewise constant outward unit normal to $\Gamma_h$ is denoted by $\bm{\nu}_h$, with its restriction to $K$ being $\bm{\nu}_K := \bm{\nu}_h|_K$. The discrete tangential projection is $\mathbf{P}_h := \mathbf{I} - \bm{\nu}_h \otimes \bm{\nu}_h$. The surface measures of $\gamma$ and $\Gamma_h$ are related by $\mathrm{d}\sigma(\bm{p}(x)) = \mu_h(x) \mathrm{d}\sigma_h(x)$, where $|1 - \mu_h| \lesssim h^2$. For simplicity, we frequently omit the composition with $\bm{p}$ and write $|\bm{\nu} - \bm{\nu}_h| \lesssim h$.

On the exact surface element $K^\gamma$, let $\bm{n}$ denote the outward unit co-normal vector along $\partial K^\gamma$, and let $\bm{t} := \bm{\nu} \times \bm{n}$ be the unit tangent vector along $\partial K^\gamma$. Analogously, for a discrete element $K \in \mathcal{T}_h$, let $\bm{n}_K$ denote the co-normal along $\partial K$, and $\bm{t}_K := \bm{\nu}_K \times \bm{n}_K$ be the edge tangent vector. When defined globally on $\Gamma_h$, these piecewise constant vectors are denoted by $\bm{n}_h$ and $\bm{t}_h$.

\paragraph{Operators and function spaces}
Differential operators on $\Gamma_h$, such as $\nabla_{\Gamma_h}$, ${\rm div}_{\Gamma_h}$, $\curl_{\Gamma_h}$ and $\HG$, are understood in a piecewise sense. Accordingly, we define the broken Sobolev spaces:
$$
H_h^m(\Gamma_h) := \{v \in L^2(\Gamma_h): v|_K \in H^m(K),\ \forall K \in \mathcal{T}_h\}, \quad \|v\|_{H^m_h(\Gamma_h)}^2 := \sum_{K \in \mathcal{T}_h} \|v\|_{H^m(K)}^2.
$$
For a domain $D \in \{\Gamma_h, \mathcal{E}_h, K, e\}$, denote the $L^2$ inner product on $D$ by $ (\cdot, \cdot)_D $.
We also write
\[
H_h^m(\gamma)
=
\{v\in L^2(\gamma): v|_{K^\gamma}\in H^m(K^\gamma)\ \forall K\in\mathcal T_h\},\quad \|v\|_{H_h^m(\gamma)}^2
=
\sum_{K\in\mathcal T_h}\|v\|_{H^m(K^\gamma)}^2.
\]
The vector-valued broken space $\bm{H}_h^m(\Gamma_h):=[H_h^m(\Gamma_h)]^3$ is
defined componentwise, with the corresponding componentwise norm.

Define the second-order directional derivatives of $v\in H_h^2(K)$ along $\partial K$ as
\begin{equation*} 
\partial_{tt} v := \bm{t}_K^\top (\nabla_K^2 v) \bm{t}_K, \quad
\partial_{nn} v := \bm{n}_K^\top (\nabla_K^2 v) \bm{n}_K, \quad
\partial_{tn} v := \bm{t}_K^\top (\nabla_K^2 v) \bm{n}_K.
\end{equation*}

Next, we define the standard jump and average operators. For edge $e=\partial K_1 \cap \partial K_2$, let $\bm{n}_i$ denote the co-normal vector to $\partial K_i$ along $e$, and let $\bm{t}_i := \bm{\nu}_i \times \bm{n}_i$ be the corresponding unit tangent vector. Because $\Gamma_h$ is a discrete polyhedral surface, $\bm{n}_1 \neq - \bm{n}_2$ in general, whereas $\bm{t}_1 = - \bm{t}_2$. 
For a scalar trace $v_i:=v|_{K_i}$, and an ordinary vector-valued trace $\bm v_i:=\bm v|_{K_i}$, we use the value jump and average
\begin{equation}\label{eq:ordinary-jump-average}
        [v] := v_1-v_2,
        \quad
        \{v\}:=\frac12(v_1+v_2),\quad \llbracket \bm v \rrbracket := \bm v_1-\bm v_2,
        \quad
        \{\bm v\}:=\frac12(\bm v_1+\bm v_2).
\end{equation}
For normal and tangential components, we use the outward co-normal convention
\begin{equation}\label{eq:component-jump-average}
\begin{aligned}
[\bm v\cdot\bm n_h]
:= \bm v_1\cdot\bm n_1+\bm v_2\cdot\bm n_2,&\qquad
[\bm v\cdot\bm t_h]
:= \bm v_1\cdot\bm t_1+\bm v_2\cdot\bm t_2, \\
\{\bm v\cdot\bm n_h\}
:=\frac12(\bm v_1\cdot\bm n_1-\bm v_2\cdot\bm n_2),&\qquad 
\{\bm v\cdot\bm t_h\}
:=\frac12(\bm v_1\cdot\bm t_1-\bm v_2\cdot\bm t_2).
\end{aligned}
\end{equation}
Here the minus sign in the averages, and the plus sign in the jumps, reflect the outward co-normal convention $\bm n_1,\bm n_2$ (and $\bm t_1,\bm t_2$), which point in opposite directions across $e$ on the polyhedral surface; with this convention $[\bm v\cdot\bm n_h]$ measures the true normal-flux mismatch and $\{\bm v\cdot\bm n_h\}$ its average.
For $\bm{v} \in \bm{H}_h^1(\Gamma_h)$, $\bm{v} \in \bm{H}(\mathrm{div}_{\Gamma_h}; \Gamma_h)$ if and only if $[\bm{v} \cdot \bm{n}_h]|_e = 0$ for all interior edges $e$.
For an elementwise tensor field $\bA$, we define the
conormal-flux jump and average by
\begin{equation}\label{eq:tensor-conormal-jump-average}
\begin{aligned}
\llbracket \bA \bm n_h\rrbracket
:= \bA_1\bm n_1+\bA_2\bm n_2,\qquad 
\{\bA\bm n_h\}
:=\frac12(\bA_1\bm n_1-\bA_2\bm n_2),
\end{aligned}
\end{equation}
where $\bA_i:=\bA|_{K_i}$. With these conventions, for any
ordinary vector trace $\bm v$,
\begin{equation}\label{eq:edge-flux-decomposition}
\begin{aligned}
(\bA_1\bm n_1)\cdot\bm v_1
+
(\bA_2\bm n_2)\cdot\bm v_2
=
\{\bA_h\bm n_h\}\cdot\llbracket \bm v\rrbracket
+
\llbracket \bA_h\bm n_h\rrbracket\cdot\{\bm v\}.
\end{aligned}
\end{equation}

\begin{lemma}\label{lm:Pnjump}
Let $\llbracket \bn_h \rrbracket := \bm{n}_1 + \bm{n}_2$ denote the jump of the discrete co-normal across an interior edge $e$. Then, the exact tangential projection satisfies
$$
|\bP \llbracket \bn_h \rrbracket |\lesssim h^2.
$$
\end{lemma}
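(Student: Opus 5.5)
The bound rests on the geometric structure of the edge $e$. Both co-normals are orthogonal to the common edge tangent, and each lies in the plane of its face. Concretely, $\bn_i = \bm t_i\times\bnu_i$ up to a sign convention, with $\bm t_1=-\bm t_2=:\bm t_e$. Hence $\bn_1$ and $\bn_2$ both lie in the plane orthogonal to $\bm t_e$, and so does $\bnu$'s relevant component. I will work entirely inside this two-dimensional plane $\bm t_e^\perp$.

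\textbf{Step 1.} Write $\bnu_1,\bnu_2$ as unit vectors in $\bm t_e^\perp$ up to the small $\bm t_e$-components of $\bnu$. Inside this plane, $\bn_i$ is $\bnu_i$ rotated by $\pm\pi/2$. Let $R$ be the rotation by $\pi/2$ about $\bm t_e$. The orientation convention gives $\bn_1 = R\bnu_1$ and $\bn_2 = -R\bnu_2$. Therefore
\[
\llbracket \bn_h\rrbracket = R(\bnu_1-\bnu_2),
\]
and $|\llbracket \bn_h\rrbracket| = |\bnu_1-\bnu_2| = 2\sin(\theta/2)$, where $\theta$ is the dihedral defect angle. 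This already gives $|\llbracket\bn_h\rrbracket|\lesssim h$, since $|\bnu_i-\bnu|\lesssim h$.

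\textbf{Step 2.} Determine the direction of the jump. Since $\bnu_1$ and $\bnu_2$ are unit vectors, $\bnu_1-\bnu_2$ is orthogonal to $\bnu_1+\bnu_2$. Applying $R$, the vector $\llbracket\bn_h\rrbracket$ is parallel to $\bnu_1+\bnu_2$ (its rotation is perpendicular, and we are in two dimensions). Indeed, $\bn_1+\bn_2$ is the sum of two unit vectors symmetric about the bisector, so it points along the average normal direction $\bm m := (\bnu_1+\bnu_2)/|\bnu_1+\bnu_2|$. Thus
\[
\llbracket \bn_h\rrbracket = \pm|\llbracket\bn_h\rrbracket|\,\bm m .
\]

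\textbf{Step 3.} Combine the two facts:
\[
|\bP\llbracket\bn_h\rrbracket| \le |\llbracket\bn_h\rrbracket|\,|\bP\bm m| \lesssim h\,|\bP\bm m|,
\]
with $|\bP\bm m|\le |\bm m-\bnu|\lesssim \max_i|\bnu_i-\bnu|\lesssim h$, evaluated at points of $e$ (composed with $\bm p$). This yields the $h^2$ bound. No second-order cancellation is needed.

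\textbf{Main obstacle.} The delicate point is the sign and orientation bookkeeping in Step 1. I must verify that $\bn_1$ and $\bn_2$ are the in-plane rotations of $\bnu_1$ and $\bnu_2$ with opposite signs. I will do this by noting that $\bn_i=\bm t_i\times\bnu_i$, which follows from $\bm t_i=\bnu_i\times\bn_i$ and orthonormality. Then $\bn_1+\bn_2=\bm t_e\times(\bnu_1-\bnu_2)$ directly, since $\bm t_2=-\bm t_e$. This formula makes Step 2 immediate: $\bm t_e\times(\bnu_1-\bnu_2)$ is orthogonal both to $\bm t_e$ and to $\bnu_1-\bnu_2$. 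The latter is orthogonal to $\bnu_1+\bnu_2$, and all three vectors lie in $\bm t_e^\perp$, so the jump is parallel to $\bnu_1+\bnu_2$. Throughout I use the paper's standing estimate $|\bnu-\bnu_h|\lesssim h$.
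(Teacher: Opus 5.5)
Your argument is correct, but it uses the edge formula differently from the paper. Both proofs start from $\llbracket\bn_h\rrbracket=\bm t_e\times(\bnu_1-\bnu_2)$.

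\textbf{The paper's route.} It splits $\bm d=\bnu_1-\bnu_2$ relative to the exact normal, $\bm d=(\bm d\cdot\bnu)\bnu+\bP\bm d$. The first piece is bounded by the unit-vector identity $|\bm d\cdot\bnu|=\tfrac12\bigl|\,|\bnu_2-\bnu|^2-|\bnu_1-\bnu|^2\,\bigr|\lesssim h^2$. For the second piece it also splits $\bm t_e$, using that $\bP$ annihilates the cross product of two $\gamma$-tangential vectors and that $|\bm t_e\cdot\bnu|\lesssim h$.

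\textbf{Your route.} You use the exact planar structure at the edge. Since $e$ lies in both flat faces, $\bnu_1,\bnu_2\in\bm t_e^\perp$ exactly, so the jump is exactly parallel to the discrete bisector $\bm m$. This gives the identity $\bP\llbracket\bn_h\rrbracket=\pm|\bnu_1-\bnu_2|\,\bP\bm m$, an explicit product of two first-order factors: the dihedral defect times the misalignment of $\bm m$ with $\bnu$.

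\textbf{Comparison.} Your version is shorter, needs a single decomposition, and shows where each power of $h$ comes from. The paper's version never has to identify the direction of the jump, and it stays in the exact-normal frame used throughout Section~\ref{sec:geo-error}.

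\textbf{A wording point.} In Step~1, the phrase ``up to the small $\bm t_e$-components of $\bnu$'' is misleading. The face normals lie in $\bm t_e^\perp$ with no error, and that exactness is precisely what Step~2 needs. The exact normal enters only in Step~3, through $|\bP\bm m|\le|\bm m-\bnu|\lesssim h$ at points $x\in e$, with $\bnu$ evaluated at $\bm p(x)$.
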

The proof is deferred to Appendix \ref{sec:appendix_proof}.

\paragraph{Extensions and lifts} 
The closest point projection $\bm{p}: \Gamma_h \to \gamma$ is a bijection, with its inverse denoted by $\bm{p}^{-1}$. For a function $v$ on $\Gamma_h$, its lift to $\gamma$ is defined as $v^\ell := v \circ \bm{p}^{-1}$. Conversely, for a function $v \in H^m(\gamma)$ with its extension $v^e = v \circ \bm{p}$, it holds that
\begin{equation}\label{eq:nabla-trans}
\nabla v^e =(\nabla \bp)^T (\nabla_{\gamma} v)^e = (\bP-d\bH)(\nabla_{\gamma} v)^e,
\end{equation}
and, for each integer $0\le m\le 3$, the following norm equivalence holds provided that $\gamma$ is of class $C^{m+1}$ (cf. \cite{dziuk1988finite, bonito2020finite}):
\begin{equation}  \label{eq:scalar-norm-equiv}
\|v\|_{H^m(K^\gamma)} \simeq \|v^e\|_{H^m(K)} \quad \forall K \in \mathcal{T}_h.
\end{equation}

\section{A new geometric estimate with applications}\label{sec:geo-error}
In this section, we present a new geometric estimate for normal-dependent integrands. It provides a convenient way to recover the $\bP_h\bnu$-type estimate widely used in the surface finite element literature; see, for instance, \cite{larsson2017continuous,hansbo2020analysis,neilan2024c}. The new estimate applies to a broader class of geometric error terms.

The section is organized around a single master estimate.
We first prove Theorem~\ref{thm:normal-separated} (normal-separated
geometric estimate), a unified tool that converts a normal-dependent
integrand into a tangential defect plus a second-order remainder. Three
groups of applications follow. The applications to vector fields
(Section~\ref{subsec:geo-vector}) recover known results---the
$\bP_h\bnu$-type estimate and a first-order superconvergence
identity---and thus mainly illustrate the strength of the master
estimate. The remaining two groups produce the geometric consistency
estimates actually used in the error analysis of
Section~\ref{sec:analysis}; both concern the tangential pairing of a
stream function $\psi$ with a symmetric tangential tensor $\bA$, and are
distinguished by whether the estimate lives in the element interior or
on the skeleton. The interior consistency
(Section~\ref{subsec:geo-tensor}) culminates in the Stokes-type tensor
Green consistency (Corollary~\ref{cor:stokes-tensor-green}), assembled
from the trace-free Hessian consistency
(Lemma~\ref{lem:neilan-volume-estimate}) and the curl--divergence
consistency (Lemma~\ref{lem:stokes-curl-div}). The boundary-flux
consistency (Section~\ref{subsec:geo-boundary}) yields the conormal-flux
consistency (Corollary~\ref{cor:conormal-flux}), obtained by specializing
a weak conormal-flux estimate (Lemma~\ref{lem:weak-conormal-flux}).

\subsection{Normal-separated geometric estimate}
We begin with a fundamental observation regarding a higher-order geometric error estimate: although the normal vector of a polyhedral surface approximation generally satisfies only a first-order pointwise accuracy $\|\bnu_h-\bnu^e\|_{L^\infty(\Gamma_h)}= \mathcal{O}(h)$, its inner product with the extension of a tangential vector field gains an additional order of accuracy in the integral sense. Crucially, the principal constant of this enhanced bound is governed mainly by the surface divergence of the tangential vector field. The explicit form of this geometric estimate is formulated in the following lemma.

\begin{lemma}[normal-to-divergence estimate] \label{lm:normal-to-divergence}
Let $\Gamma_h$ be a polyhedral approximation to a closed surface $\gamma$. Assume $\bq$ is a tangential vector field on $\gamma$, i.e., $\bq \cdot \bnu = 0$ on $\gamma$. Then, the following geometric estimate holds:
\begin{equation}\label{eq:normal-to-divergence}
    \left| \int_{\Gamma_h} \bq^e \cdot \bnu_h \dif\sigma_h \right| \lesssim h^2 \|{\rm div}_\gamma \bq\|_{L^1(\gamma)} + h^3 \|\bq\|_{L^1(\gamma)}.
\end{equation}
\end{lemma}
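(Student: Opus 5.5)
Our plan is to convert the flux of $\bq^e$ through $\Gamma_h$ into a volume integral via the divergence theorem in the thin region between $\Gamma_h$ and $\gamma$, where $\bq\cdot\bnu=0$ on $\gamma$ kills the flux on the exact surface. Concretely, since both surfaces are closed and $\Gamma_h\subset U_\delta$ with $|d|\lesssim h^2$ on $\Gamma_h$, for each $K\in\mathcal T_h$ consider the region $D_K:=\{\bm p(x)+s\bnu(\bm p(x)) : x\in K,\ s \text{ between } 0 \text{ and } d(x)\}$. Apply the divergence theorem to $\bq^e$ (with $\bq^e=\bq\circ\bm p$) on $\bigcup_K D_K$, treating the parts where $d>0$ and $d<0$ with signs. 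The lateral faces of $D_K$ are ruled by normal lines $\bnu$. Since $\bq^e$ is tangential, meaning $\bq^e\perp\bnu^e$ everywhere by $\bnu(x)=\bnu(\bm p(x))$, these lateral faces carry no flux, and the lateral contributions of neighbouring elements cancel anyway. On $\gamma$ the flux vanishes because $\bq\cdot\bnu=0$. This yields
\[
\int_{\Gamma_h}\bq^e\cdot\bnu_h\dif\sigma_h=\pm\int_{D}\div\,\bq^e\dif x ,
\]
with the sign determined pointwise by the sign of $d$. It suffices to bound $\int_{D}|\div\,\bq^e|\dif x$.

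Next we express $\div\,\bq^e$ through surface quantities. By \eqref{eq:nabla-trans} applied componentwise, $\nabla\bq^e=(\nabla_\gamma\bq)^e(\bP-d\bH)$ up to the tangential/normal decomposition. Here $\nabla\bq^e=(\nabla\bq_\gamma)^e(\bP-d\bH)$, where $\nabla\bq_\gamma$ is the full $3\times3$ surface derivative $\nabla\bq^e|_\gamma$. Taking the trace gives
\[
\div\,\bq^e=(\div_\gamma\bq)^e - d\,\tr\bigl((\nabla\bq^e|_\gamma)\bH\bigr)^e .
\]
The second term is $O(|d|)$ times a derivative of $\bq$. That is not $\|\bq\|_{L^1}$ yet, so the key step is to handle it using tangentiality. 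We have $\bH$ tangential and symmetric, and $\bnu\cdot\bq=0$ gives $\bnu^\top\nabla_\gamma\bq=-\bq^\top\bH$. We then write the term via the Weingarten-weighted divergence. An alternative route is to use the coarea (Fubini) decomposition $\dif x\approx(1-s\,\tr\bH+\dots)\dif s\dif\sigma$ on $D$ and note that $\div(\bq^e)$ on the level set $\{d=s\}$ equals the surface divergence of $\bq^e$ on that parallel surface. By the standard formula relating parallel-surface divergence, that is $\div_{\gamma}(\bq\,\mu_s)/\mu_s$ with $\mu_s=\det(\bI-s\bH)$. Then $\mu_s\div\,\bq^e=\div_\gamma\bq-s\,\div_\gamma(\text{tr-adj terms}\cdot\bq)$. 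Integrating the $s$-dependent divergence term over $\gamma$ against the weight $s\in(0,d^\ell)$ would require an integration by parts that moves the derivative onto $d^\ell$, giving $|\nabla_\gamma d^\ell|\lesssim h$ times $h^2$ volume thickness, i.e.\ $h^3\|\bq\|_{L^1}$ after one more factor. This is the step I expect to be the main obstacle. A cleaner choice is to avoid the $s$-derivative altogether: the exact identity $\int_{\{d=s\}}$ flux formula yields $\mu_s\div\,\bq^e=\div_\gamma(\mathrm{cof}(\bI-s\bH)\bq)$, which is linear in $s$ with coefficient $\div_\gamma(\tilde\bH\bq)$. Integrating by parts elementwise against the thickness function $\int_0^{d^\ell}s\,ds=\tfrac12(d^\ell)^2$ gives a bound $\|\nabla_\gamma (d^\ell)^2\|_{L^\infty}\|\bq\|_{L^1}\lesssim h^3\|\bq\|_{L^1}$, using $|d|\lesssim h^2$ and $|\nabla_\gamma d^\ell|\lesssim h$. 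Because $d^\ell$ is continuous across elements (though only piecewise smooth), the elementwise integration by parts produces no interface terms.

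Collecting the pieces, the zeroth-order part contributes $\int_\gamma |d^\ell|\,|\div_\gamma\bq|\dif\sigma\lesssim h^2\|\div_\gamma\bq\|_{L^1(\gamma)}$, and the first-order-in-$s$ part contributes $h^3\|\bq\|_{L^1(\gamma)}$. Higher powers of $s$ are handled identically, with even smaller factors. This gives \eqref{eq:normal-to-divergence}. Rigorously justifying the divergence theorem on the sign-changing region $D$ requires some care. I would make it rigorous by writing everything directly as the parametrization $(y,s)\mapsto y+s\bnu(y)$ over $y\in\gamma$, $s\in[0,d^\ell(y)]$. For this I would use the elementwise identity $\bnu_h\dif\sigma_h=\mathrm{cof}(\nabla\Phi)\cdot$ for the graph map $\Phi(y)=y+d^\ell(y)\bnu(y)$, so that the left side equals $\int_\gamma \bq\cdot\mathrm{cof}(\bP+d^\ell\bH+\bnu\otimes\nabla_\gamma d^\ell)\,\bnu\dif\sigma$, which can then be expanded directly.
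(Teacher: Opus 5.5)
Your proof is correct, but it takes a different route from the paper.

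\textbf{The paper's route.} It stays on the two surfaces and argues pointwise. From $\nabla_{\Gamma_h}d=\bP_h\bnu^e$ it writes $\bq^e\cdot\bnu_h=(\bq^e\cdot\bnu_h)(1-\bnu^e\cdot\bnu_h)-\bq^e\cdot\nabla_{\Gamma_h}d$. It then expresses $\nabla_{\Gamma_h}d=\bP_h(\bP-d\bH)(\nabla_\gamma\rho_h)^e$ through the height function $\rho_h=d\circ\bp^{-1}$ (your $d^\ell$). It discards $O(h^3)|\bq|$ remainders coming from $|1-\bnu^e\cdot\bnu_h|$, $|\bP\bP_h\bP-\bP|$, $|d|$ and $|\mu_h^{-1}-1|$. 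Finally it integrates $\int_\gamma\bq\cdot\nabla_\gamma\rho_h$ by parts.

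\textbf{Your route.} You obtain an exact representation instead. The divergence theorem over the signed region between $\gamma$ and $\Gamma_h$, combined with the parallel-surface Piola identity $\mu_s\div\bq^e=\div_\gamma\bigl(\mathrm{adj}(\mathbf I+s\bH)\bq\bigr)$, $\mu_s=\det(\mathbf I+s\bH)$, gives
$\int_\gamma\rho_h\div_\gamma\bq\dif\sigma+\frac12\int_\gamma\rho_h^2\div_\gamma(\tilde\bH\bq)\dif\sigma$, with $\tilde\bH=(\tr\bH)\bP-\bH$. The adjugate is exactly affine in $s$, so no higher powers occur. One more integration by parts bounds the second term by $h^3\|\bq\|_{L^1(\gamma)}$.

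The engine is the same in both proofs. The function $\rho_h$ is globally Lipschitz on $\gamma$ with $|\rho_h|\lesssim h^2$ and $|\nabla_\gamma\rho_h|\lesssim h$, and integration by parts on the closed surface moves the derivative from $\rho_h$ onto $\bq$.

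\textbf{Two details of the volume route need fixing.}
\begin{itemize}
\item The lateral faces of $D_K$ do carry flux. Their normals are orthogonal to $\bnu$, just as $\bq^e$ is, so nothing forces $\bq^e\cdot\bn=0$ there. Only the cancellation between neighbouring elements, or a global change of variables, is legitimate.
\item The intermediate term $\div_\gamma(\tilde\bH\bq)$ involves the full derivative $\nabla_\gamma\bq$ (and $\nabla_\gamma\bH$ if expanded). The lemma does not assume this regularity, so a density argument would be needed.
\end{itemize}

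\textbf{Your final suggestion removes both issues} and is the neatest version. With $\Phi=\bp^{-1}|_\gamma$ and $\bq\perp\bnu$, the normal part $\det(\mathbf I+\rho_h\bH)\bnu$ of $\mathrm{cof}(D\Phi)\bnu$ drops out, and
\[
\int_{\Gamma_h}\bq^e\cdot\bnu_h\dif\sigma_h=-\int_\gamma\bq\cdot\bigl(\bP+\rho_h\tilde\bH\bigr)\nabla_\gamma\rho_h\dif\sigma .
\]
This is an exact form of the paper's Steps 1--3. It needs only $\gamma\in C^2$ and a single integration by parts.

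\textbf{What the paper's version buys.} Although inexact, it is phrased in the quantities $\bP_h$, $\bnu_h$, $\mu_h$. These are reused directly in the Taylor-expansion argument of Theorem~\ref{thm:normal-separated}.
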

\begin{proof}
First, a rough estimate shows that $|\bq^e \cdot \bnu_h| \lesssim h |\bq^e|$, since the tangential vector field $\bq$ satisfies $\bq^e \cdot \bnu_h= \bq^e \cdot (\bnu_h - \bnu^e)$. We provide a more refined estimate in the following steps. 

{\it Step 1 (decomposition of the normal component).} Since $\nabla d$ is constant along the closest point projection, it holds that $\nabla_{\Gamma_h} d = \bP_h \nabla d = \bP_h \bnu^e$. Taking the inner product with $\bq^e$, we have
\begin{equation*}
    \bq^e \cdot \nabla_{\Gamma_h} d = \bq^e \cdot (\bnu^e - (\bnu^e \cdot \bnu_h)\bnu_h) = - (\bnu^e \cdot \bnu_h)(\bq^e \cdot \bnu_h),
\end{equation*}
which gives a decomposition of $\bq^e \cdot \bnu_h$ into a principal part and a higher-order perturbation:
\begin{equation*}
    \bq^e \cdot \bnu_h = (\bq^e \cdot \bnu_h)(1 - \bnu^e \cdot \bnu_h) - \bq^e \cdot \nabla_{\Gamma_h} d.
\end{equation*}
For the first term, since $|1 - \bnu^e \cdot \bnu_h| \lesssim h^2$, combining this with the rough estimate $|\bq^e \cdot \bnu_h| \lesssim h |\bq^e|$ shows that its integral over $\Gamma_h$ is readily bounded by $C h^3 \|\bq\|_{L^1(\gamma)}$.

{\it Step 2 (representation of $\nabla_{\Gamma_h}d$).} Since $\nabla_{\Gamma_h}d$ depends solely on its values on $\Gamma_h$, we can regard $\bp$ as a mapping $\bp: \Gamma_h \to \gamma$. We define a function on $\gamma$ by $\rho_h(y) := d(\bp^{-1}(y))$ for $y\in \gamma$; equivalently, $d(x) = \rho_h(\bp(x))$ for all $x \in \Gamma_h$. Using the relation $\nabla \bp = \bP - d \bH$, the chain rule implies that on $\Gamma_h$,
\begin{equation} \label{eq:rho-h}
\bP_h \bnu^e = \nabla_{\Gamma_h} d = \bP_h (\bP - d \bH)(\nabla_\gamma\rho_h)^e.
\end{equation}
The tangential field satisfies $\bP (\nabla_\gamma\rho_h)^e = (\nabla_\gamma\rho_h)^e$. Since $|\bnu^e - \bnu_h| \lesssim h$, we obtain
\begin{equation*}
\bP_h (\bP - d \bH)(\nabla_\gamma\rho_h)^e = (\mathbf{I} - \bnu_h \otimes (\bnu_h - \bnu^e) - d\bP_h \bH) (\nabla_\gamma\rho_h)^e = (\mathbf{I} + \mathcal{O}(h)) (\nabla_\gamma\rho_h)^e.
\end{equation*}
Therefore, $|(\nabla_\gamma\rho_h)^e| \lesssim |\bP_h \bnu^e| \lesssim h$.

{\it Step 3 (estimate of $\int_{\Gamma_h} \bq^e \cdot \nabla_{\Gamma_h} d \dif\sigma_h$).} For a tangential vector field $\bq$, it holds that $\bP\bq^e = \bq^e$. Using the identity \eqref{eq:rho-h} from Step 2, we obtain
\begin{equation*}
\begin{aligned}
\bq^e \cdot \nabla_{\Gamma_h} d &= \bq^e \cdot \big(\bP_h (\bP - d \bH) (\nabla_\gamma\rho_h)^e\big) 
= \big( (\bP - d\bH) \bP_h \bq^e \big) \cdot (\nabla_\gamma \rho_h)^e \\
&= (\bP\bP_h\bP \bq^e - d\bH \bP_h \bq^e) \cdot (\nabla_\gamma \rho_h)^e \\
&= \bq^e \cdot (\nabla_\gamma \rho_h)^e + (\bP\bP_h\bP - \bP)\bq^e \cdot (\nabla_\gamma \rho_h)^e - d(\bH \bP_h \bq^e)\cdot (\nabla_\gamma \rho_h)^e.
\end{aligned}
\end{equation*}
Using the estimate $|(\nabla_\gamma \rho_h)^e| \lesssim h$ from Step 2, along with $|\bP\bP_h\bP - \bP| \lesssim h^2$ and $|d| \lesssim h^2$, the last two terms can be bounded pointwise by $C h^3 |\bq^e|$. Consequently, their integrals over $\Gamma_h$ are bounded by $C h^3 \|\bq\|_{L^1(\gamma)}$.

Applying integration by parts on the closed surface $\gamma$ to the first term yields
\begin{equation*}
\begin{aligned}
    \int_{\Gamma_h} \bq^e \cdot (\nabla_\gamma \rho_h)^e \dif\sigma_h &= \int_\gamma \bq \cdot \nabla_\gamma \rho_h \dif \sigma +  \int_{\gamma} \bq \cdot \nabla_\gamma \rho_h (\mu_h^{-1} - 1) \dif\sigma \\
    &= -\int_\gamma {\rm div}_\gamma \bq \cdot \rho_h \dif \sigma +  \int_{\gamma} \bq \cdot \nabla_\gamma \rho_h (\mu_h^{-1} - 1) \dif\sigma. 
\end{aligned}
\end{equation*}
Noting that $|\rho_h| \lesssim h^2$ and $|\mu_h^{-1} - 1| \lesssim h^2$, we obtain
\begin{equation*}
\left|\int_{\Gamma_h} \bq^e \cdot (\nabla_\gamma \rho_h)^e \dif\sigma_h\right| \lesssim h^2\|{\rm div}_\gamma \bq\|_{L^1(\gamma)} + h^3 \|\bq\|_{L^1(\gamma)}.
\end{equation*}
Combining the steps above yields the desired estimate \eqref{eq:normal-to-divergence}.
\end{proof}

\begin{theorem}[normal-separated geometric estimate]
\label{thm:normal-separated}
Let $\Gamma_h$ be a polyhedral approximation to a closed surface $\gamma$.
Let $F:\gamma\times\mathbb{S}^2 \to \mathbb{R}$, where the second variable can be extended to an open neighborhood $U$ of $\mathbb{S}^2$ (denoted as $\tilde{F}: \gamma \times U \to \mathbb{R}$).
For $y \in \gamma$, set
\begin{equation}\label{def:geo-qM}
\begin{aligned}
    F^\bnu(y) &:= F(y,\bnu(y)), \\ 
     \bq_F(y) &:=\bP(y)D_\bz \tilde{F}(y,\bnu(y)), \quad 
    q_F^\perp(y) := D_\bz \tilde{F}(y,\bnu(y)) \cdot \bnu(y), \\
    M_F(y) &:= \sup_{|\bz-\bnu(y)|\le C_\gamma h} |D_\bz^2 \tilde{F}(y,\bz)|,
\end{aligned}
\end{equation}
where $\bP(y) = \mathbf{I} - \bnu(y)\otimes \bnu(y)$ is the tangential projection, and the constant $C_\gamma$ is chosen large enough such that $|\bnu_h^\ell(y) - \bnu(y)| \le C_\gamma h$ for all $y \in \gamma$.
Assume $F^{\bnu}, q_F^\perp, M_F \in L^1(\gamma)$, $\bq_F \in \bm{L}^1(\gamma)$ and ${\rm div}_\gamma \bq_F \in L^1(\gamma)$. Then, the following geometric estimate holds:
\begin{equation}\label{eq:normal-separated}
\begin{aligned}
&\quad \left| \int_{\Gamma_h} F(y,\bnu_h^\ell(y))|_{y = \bp(x)} \dif \sigma_h(x) - \int_\gamma F(y,\bnu(y))\dif \sigma(y) \right| \\
&\lesssim h^{2}\big(
\|F^\bnu\|_{L^1(\gamma)} + \|{\rm div}_\gamma\bq_F\|_{L^1(\gamma)} + h\|\bq_F\|_{L^1(\gamma)} + \|q_F^\perp\|_{L^1(\gamma)} + \|M_F\|_{L^1(\gamma)}
\big).
\end{aligned}
\end{equation}
\end{theorem}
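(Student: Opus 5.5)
First I will change variables to move the discrete integral onto $\gamma$, and then Taylor expand in the normal variable. Using $\mathrm{d}\sigma(\bp(x))=\mu_h(x)\,\mathrm{d}\sigma_h(x)$, I rewrite
\[
\int_{\Gamma_h} F(y,\bnu_h^\ell(y))|_{y=\bp(x)}\dif\sigma_h(x)=\int_\gamma F(y,\bnu_h^\ell(y))\,(\mu_h^\ell)^{-1}(y)\dif\sigma(y).
\]
The difference with the exact integral then splits into two pieces. The first is $\int_\gamma [F(y,\bnu_h^\ell)-F(y,\bnu)]\dif\sigma$. The second is the measure defect $\int_\gamma F(y,\bnu_h^\ell)((\mu_h^\ell)^{-1}-1)\dif\sigma$. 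Since $|(\mu_h^\ell)^{-1}-1|\lesssim h^2$, the measure defect is bounded by $h^2\int_\gamma|F(y,\bnu_h^\ell)|$.

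For both pieces I apply Taylor's formula to $\tilde F(y,\cdot)$ along the segment from $\bnu(y)$ to $\bnu_h^\ell(y)$. This segment lies in the ball $|\bz-\bnu(y)|\le C_\gamma h$, and I assume $h$ is small enough that this ball sits inside $U$. With $\bm\delta:=\bnu_h^\ell-\bnu$, I get pointwise
\[
F(y,\bnu_h^\ell)=F^\bnu+D_\bz\tilde F(y,\bnu)\cdot\bm\delta+R,\qquad |R|\le \tfrac12 M_F|\bm\delta|^2\lesssim h^2M_F .
\]
In particular $|F(y,\bnu_h^\ell)|\lesssim |F^\bnu|+h|\bq_F|+h|q_F^\perp|+h^2M_F$. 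Hence the measure defect is bounded by the right-hand side of \eqref{eq:normal-separated}, using $h^3\le h^2$ for the $q_F^\perp$ term.

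Next I decompose the linear term as $D_\bz\tilde F\cdot\bm\delta=\bq_F\cdot\bm\delta+q_F^\perp\,(\bnu\cdot\bm\delta)$.
\begin{itemize}
\item Normal part: since both vectors are unit, $\bnu\cdot\bm\delta=\bnu\cdot\bnu_h^\ell-1=-\tfrac12|\bm\delta|^2=\mathcal O(h^2)$. This contributes at most $h^2\|q_F^\perp\|_{L^1(\gamma)}$.
\item Tangential part: since $\bq_F\cdot\bnu=0$, we have $\bq_F\cdot\bm\delta=\bq_F\cdot\bnu_h^\ell$.
\end{itemize}
The remainder $R$ contributes at most $h^2\|M_F\|_{L^1}$.

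It remains to bound $\int_\gamma\bq_F\cdot\bnu_h^\ell\dif\sigma$, which is the only step where the pointwise $\mathcal O(h)$ size must be beaten. This is exactly the setting of Lemma~\ref{lm:normal-to-divergence}, but that lemma is stated on $\Gamma_h$. So I transfer back:
\[
\int_\gamma \bq_F\cdot\bnu_h^\ell\dif\sigma=\int_{\Gamma_h}\bq_F^e\cdot\bnu_h\,\mu_h\dif\sigma_h=\int_{\Gamma_h}\bq_F^e\cdot\bnu_h\dif\sigma_h+\int_{\Gamma_h}\bq_F^e\cdot\bnu_h(\mu_h-1)\dif\sigma_h .
\]
The last integral is $\lesssim h\cdot h^2\|\bq_F\|_{L^1}$, because $|\bq_F^e\cdot\bnu_h|=|\bq_F^e\cdot(\bnu_h-\bnu^e)|\lesssim h|\bq_F^e|$. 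The first integral is bounded by Lemma~\ref{lm:normal-to-divergence} with $\bq=\bq_F$, giving $h^2\|\div_\gamma\bq_F\|_{L^1}+h^3\|\bq_F\|_{L^1}$.

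Collecting all contributions yields \eqref{eq:normal-separated}. The main obstacle is this tangential first-order term. It is handled entirely by the integral cancellation in Lemma~\ref{lm:normal-to-divergence}, so the remaining care is only in matching the $\mu_h$ factors and checking that the Taylor segment stays within the domain of $\tilde F$.
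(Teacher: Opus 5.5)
Your proof is correct and follows the paper's argument. You Taylor-expand $\tilde F(y,\cdot)$ about $\bnu(y)$, split $D_\bz\tilde F$ into $\bq_F+q_F^\perp\bnu$, dispose of the normal part via $\bnu\cdot(\bnu_h^\ell-\bnu)=-\tfrac12|\bnu_h^\ell-\bnu|^2$, and hand the tangential term $\int_{\Gamma_h}\bq_F^e\cdot\bnu_h$ to Lemma~\ref{lm:normal-to-divergence}. The only difference is how the area factor is tracked: the paper keeps $\mu_h^{-1}$ on the difference $F(y,\bnu_h^\ell)-F^\bnu$, so the tangential term maps back to $\Gamma_h$ exactly and the measure defect falls on $F^\bnu$. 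Your split instead puts the defect on $F(y,\bnu_h^\ell)$, which costs an extra Taylor bound and an extra $h^3\|\bq_F\|_{L^1(\gamma)}$ transfer term, both harmless.
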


\begin{proof} 
Recall that $\bnu_h^\ell$ is the lift of $\bnu_h$ to $\gamma$. By expanding the function with respect to the second variable in the ambient space, we obtain
\begin{equation} \label{eq:geo-taylor}
    F(y,\bnu_h^\ell(y))-F(y,\bnu(y)) = D_\bz \tilde{F}(y,\bnu(y))\cdot(\bnu_h^\ell(y)-\bnu(y))+R_h(y) \qquad \text{for } y \in \gamma,
\end{equation}
where the second-order remainder $R_h$ satisfies $|R_h|\lesssim M_F|\bnu_h^\ell-\bnu|^2\lesssim h^2M_F$.

We decompose the gradient into its tangential and normal components: $D_\bz \tilde{F}(y,\bnu(y)) = \bq_F(y) + q_F^\perp(y)\bnu(y)$ (Here, we note that the tangential component $\bq_F$ is independent of the extension). Consequently, the linear term in \eqref{eq:geo-taylor} becomes
$$
\begin{aligned}
D_\bz \tilde{F}(y,\bnu) \cdot (\bnu_h^\ell-\bnu) &= \bq_F(y) \cdot (\bnu_h^\ell-\bnu) + q_F^\perp(y)\bnu \cdot (\bnu_h^\ell-\bnu) \\
&= \bq_F(y) \cdot (\bnu_h^\ell-\bnu) -\frac12 q_F^\perp(y)|\bnu_h^\ell - \bnu|^2.
\end{aligned}
$$
Therefore, we can absorb the normal component into a modified remainder $\tilde{R}_h(y) := R_h(y) - \frac{1}{2}q_F^\perp(y)|\bnu_h^\ell-\bnu|^2$, which is readily bounded by $|\tilde{R}_h| \lesssim h^2(M_F + |q_F^\perp|)$. The Taylor expansion \eqref{eq:geo-taylor} then simplifies to
$$
    F(y,\bnu_h^\ell(y))-F(y,\bnu(y)) = \bq_F(y)\cdot(\bnu_h^\ell(y)-\bnu(y))+\tilde{R}_h(y).
$$
Crucially, by our definition, $\bq_F$ is a tangential vector field on $\gamma$ (i.e., $\bq_F \cdot \bnu = 0$). We can now proceed with the integration:
\[
\begin{aligned}
&\int_{\Gamma_h}F(y,\bnu_h^\ell(y))|_{y = \bp(x)}\dif\sigma_h(x) 
- \int_\gamma F(y,\bnu(y))\dif\sigma(y) \\
=~&\int_{\gamma} \big(F(y,\bnu_h^\ell(y))- F(y,\bnu(y)) \big) \mu_h^{-1} \dif\sigma(y) + \int_\gamma(\mu_h^{-1}-1)F^\bnu\dif\sigma \\
=~&\int_{\gamma} \bq_F(y) \cdot \bnu_h^\ell(y) \mu_h^{-1}\dif\sigma(y) + \int_\gamma\mu_h^{-1} \tilde{R}_h\dif\sigma + \int_\gamma(\mu_h^{-1}-1)F^\bnu\dif\sigma \\
=~&\int_{\Gamma_h} \bq_F^e \cdot \bnu_h\dif\sigma_h + \int_\gamma\mu_h^{-1} \tilde{R}_h\dif\sigma + \int_\gamma(\mu_h^{-1}-1)F^\bnu\dif\sigma.
\end{aligned}
\]
Applying Lemma \ref{lm:normal-to-divergence} (normal-to-divergence estimate) directly to the first term and noting that $|\mu_h^{-1} - 1| \lesssim h^2$, we obtain the desired estimate \eqref{eq:normal-separated}.
\end{proof}

\begin{remark}[special case]
Let us take $\tilde{F}(y, \bz) = \bq(y) \cdot \bz$ in Theorem \ref{thm:normal-separated}, where $\bq$ is a tangential vector field on $\gamma$. Since this function is naturally linear with respect to $\bz$, a straightforward calculation yields $F^\bnu(y) = 0$, $\bq_F(y) = \bq(y)$, $q_F^\perp(y) = 0$, and $M_F(y) = 0$. Consequently, the estimate \eqref{eq:normal-separated} perfectly recovers the result established in Lemma \ref{lm:normal-to-divergence}.
\end{remark}

\begin{remark}[higher-order geometric approximation]
Suppose $\Gamma_h$ is a $k$-th order surface approximation satisfying the standard geometric bounds $|d| \eqsim |\rho_h| \lesssim h^{k+1}$, $|\bnu_h^\ell-\bnu| \lesssim h^k$, and $|\mu_h^{-1}-1| \lesssim h^{k+1}$ (cf. \cite{demlow2009higher}). Following the identical proof structure, the geometric estimate naturally generalizes to:
\begin{equation*}
\begin{aligned}
&\quad \left| \int_{\Gamma_h} F(y,\bnu_h^\ell(y))|_{y = \bp(x)}\dif \sigma_h(x) - \int_\gamma F(y,\bnu(y))\dif \sigma(y) \right| \\
& \lesssim
h^{k+1}\big(\|F^\bnu\|_{L^1(\gamma)} + \|{\rm div}_\gamma\bq_F\|_{L^1(\gamma)} + h^k\|\bq_F\|_{L^1(\gamma)}\big) + h^{2k}\big(\|q_F^\perp\|_{L^1(\gamma)} + \|M_F\|_{L^1(\gamma)}\big).
\end{aligned}
\end{equation*}
\end{remark}

\begin{remark}[intrinsic formulation in the fiber variable]
The preceding argument can also be formulated intrinsically on the trivial sphere bundle $\gamma\times\mathbb S^2$. Indeed, the second variable of $F$ is regarded as a point on the fiber $\mathbb S^2$, then the corresponding vertical gradient at $\bz=\bnu(y)$ lies in
\[
T_{\bnu(y)}\mathbb S^2
= \{\xi\in\mathbb R^3:\xi\cdot\bnu(y)=0\}
= T_y\gamma .
\]
Equivalently, after extending $F$ off the sphere by the radial projection $\bz\mapsto \bz/|\bz|$, namely $\tilde{F}(y, \bz) := F(y, \bz/|\bz|)$, the ambient derivative satisfies
\[
D_\bz \tilde{F}(y,\bnu(y))\cdot\bnu(y)=0 .
\]
Thus, in this case, the normal component $q_F^\perp$ vanishes and the estimate above holds with the term $\|q_F^\perp\|_{L^1(\gamma)}$ omitted.
\end{remark}

\subsection{Applications to vector fields}\label{subsec:geo-vector}

In this subsection, we demonstrate the applications of Theorem \ref{thm:normal-separated} to integral forms involving vector fields. As we shall see, these results recover, and in some cases strengthen, existing geometric error estimates.

\begin{corollary}[$\bP_h\bnu$-type estimate]
\label{cor:Phn}
For \(\bm{\chi}\in \bm{W}_1^1(\gamma)\) it holds that
\begin{equation}
\label{eq:Phn-estimate}
\left|
\int_{\Gamma_h}
\bP_h\bnu^e\cdot \bm{\chi}^e \dif\sigma_h
\right|
\lesssim
h^2\big(\|\div_{\gamma}(\bP\bm{\chi})\|_{L^1(\gamma)} + \|\bm{\chi}\|_{L^1(\gamma)}\big) \lesssim h^2 \|\bm{\chi}\|_{W_1^1(\gamma)}.
\end{equation}
\end{corollary}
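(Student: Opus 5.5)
We will obtain the estimate from Theorem~\ref{thm:normal-separated} by choosing an integrand that is linear in the normal variable. The natural candidate is $\tilde F(y,\bz) := (\mathbf{I}-\bz\otimes\bz)\bnu(y)\cdot\bm{\chi}(y) = \bnu(y)\cdot\bm{\chi}(y) - (\bz\cdot\bnu(y))(\bz\cdot\bm{\chi}(y))$. Then $F(y,\bnu_h^\ell(y))|_{y=\bp(x)} = \bP_h\bnu^e\cdot\bm{\chi}^e$ on $\Gamma_h$, since $\bnu^e(x)=\bnu(\bp(x))$, $\bm\chi^e=\bm\chi\circ\bp$, and $\bnu_h^\ell(\bp(x))=\bnu_h(x)$. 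On the exact surface, $F^\bnu(y) = \bP\bnu\cdot\bm\chi = 0$, so the second integral in \eqref{eq:normal-separated} vanishes, as does the $\|F^\bnu\|$ term.

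Next we compute the quantities in \eqref{def:geo-qM}. We have
\[
D_\bz\tilde F(y,\bz) = -(\bz\cdot\bm\chi)\bnu - (\bz\cdot\bnu)\bm\chi,
\]
so at $\bz=\bnu$ this equals $-(\bnu\cdot\bm\chi)\bnu - \bm\chi$. Hence $\bq_F = -\bP\bm\chi$ and $q_F^\perp = -2\bnu\cdot\bm\chi$. Moreover, $D_\bz^2\tilde F = -(\bnu\otimes\bm\chi + \bm\chi\otimes\bnu)$ is independent of $\bz$, so $M_F\lesssim|\bm\chi|$. Substituting these into \eqref{eq:normal-separated} gives
\[
\Big|\int_{\Gamma_h}\bP_h\bnu^e\cdot\bm\chi^e\dif\sigma_h\Big| \lesssim h^2\big(\|\div_\gamma(\bP\bm\chi)\|_{L^1(\gamma)} + h\|\bP\bm\chi\|_{L^1(\gamma)} + \|\bm\chi\|_{L^1(\gamma)}\big),
\]
and the middle term is absorbed into the last one. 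This yields the first inequality.

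The second inequality requires $\|\div_\gamma(\bP\bm\chi)\|_{L^1(\gamma)}\lesssim\|\bm\chi\|_{W^1_1(\gamma)}$. To see this, write $\div_\gamma(\bP\bm\chi) = \tr(\nabla_\gamma(\bP\bm\chi))$ and expand with the product rule: $\nabla_\gamma\bP = -\nabla_\gamma(\bnu\otimes\bnu)$ involves only $\bH$, which is bounded on a $C^2$ surface. This gives $|\div_\gamma(\bP\bm\chi)|\lesssim|\nabla_\gamma\bm\chi| + |\bm\chi|$ pointwise.

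The only delicate point is checking that the hypotheses of Theorem~\ref{thm:normal-separated} hold for $\bm\chi\in\bm W_1^1(\gamma)$, namely that $\bq_F$, $\div_\gamma\bq_F$, $q_F^\perp$ and $M_F$ all lie in $L^1$. This follows from the pointwise bounds above.
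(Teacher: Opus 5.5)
Your proof is correct and matches the paper's: the same choice $\tilde F(y,\bz)=(\mathbf{I}-\bz\otimes\bz)\bnu(y)\cdot\bm{\chi}(y)$, the same values $F^\bnu=0$, $\bq_F=-\bP\bm\chi$, $q_F^\perp=-2\bm\chi\cdot\bnu$, $M_F\lesssim|\bm\chi|$ fed into Theorem~\ref{thm:normal-separated}, and the same bounded-curvature argument for the second inequality. One wording slip: this $\tilde F$ is quadratic, not linear, in $\bz$ (the paper says so, and your own nonzero constant $D_\bz^2\tilde F$ confirms it), but this does not affect the argument.
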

\begin{proof}
Let us choose $\tilde{F}(y,\bz)=\bP_{\bz}\bnu(y)\cdot\bm{\chi}(y)$, where $\bP_{\bz}=\mathbf{I}-\bz\otimes\bz$. Noting that $\tilde{F}(y, \bz)$ is quadratic in $\bz$, a direct calculation according to the definitions in \eqref{def:geo-qM} yields
$$
F^\bnu(y) = 0, \quad \bq_F(y) = -\bP(y)\bm{\chi}(y), \quad q_F^\perp(y) = -2\bm{\chi}(y) \cdot \bnu(y), \quad M_F(y) \lesssim |\bm{\chi}(y)|.
$$
Moreover, $F(y,\bnu_h^\ell(y))|_{y = \bp(x)} = \bP_h\bnu^e(x)\cdot\bm{\chi}^e(x)$. Applying Theorem~\ref{thm:normal-separated} directly yields the first inequality in \eqref{eq:Phn-estimate}. The second inequality then follows from $\|\bP\|_{W^1_\infty(\gamma)} \lesssim 1$.
\end{proof}

It is worth noting that the first inequality in Corollary \ref{cor:Phn} provides a sharper geometric bound. A simple upper bound then yields the second inequality, which is the conventional estimate widely adopted in \cite{larsson2017continuous,hansbo2020analysis,neilan2024c}. Furthermore, we consider an estimate that plays a central role in optimal duality arguments for the Laplace-Beltrami surface FEM. While classical approaches often rely on pointwise superconvergence properties—such as $|\bP\bP_h\bP-\bP|\lesssim h^2$ (cf.~\cite{demlow2009higher})—to control tangential gradients, the normal-separated geometric estimate recovers this crucial result without imposing additional regularity. 

\begin{corollary}[superconvergence estimate for a first-order tangential form]
\label{cor:first-order-tangential}
Let $u\in H^1(\gamma)$, and $\bm{\chi} \in \bm{L}^2(\gamma)$ be a tangential vector field. Then
\begin{equation}
\label{eq:first-order-tangential}
\left|
(\nbG u^e,\bm{\chi}^e)_{\Gamma_h}
- (\nbg u,\bm{\chi})_\gamma \right|
\lesssim
h^2 \|\nbg u\|_{L^2(\gamma)} \|\bm{\chi}\|_{L^2(\gamma)} .
\end{equation}
\end{corollary}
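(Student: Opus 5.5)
The plan is to rewrite the discrete integrand pointwise in terms of lifted quantities on $\gamma$, and to observe that the normal-dependent part is \emph{quadratic} in the normal defect, because both fields are tangential. Consequently no integration by parts, and hence no divergence of $\bm{\chi}$, is needed. This matters because $\bm{\chi}$ is only in $\bm L^2(\gamma)$.

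\textbf{Step 1 (chain rule).} Set $\bm g:=\nbg u$. By \eqref{eq:nabla-trans}, on each $K\in\mathcal T_h$ we have
\[
\nbG u^e=\bP_h\nabla u^e=\bP_h(\bP-d\bH)\bm g^e .
\]
Hence
\[
(\nbG u^e,\bm\chi^e)_{\Gamma_h}=\int_{\Gamma_h}\bm\chi^e\cdot\bP_h\bm g^e\dif\sigma_h-\int_{\Gamma_h} d\,\bm\chi^e\cdot\bP_h\bH\bm g^e\dif\sigma_h .
\]
Since $|d|\lesssim h^2$, the second term is bounded by $Ch^2\|\bm\chi^e\|_{L^2(\Gamma_h)}\|\bm g^e\|_{L^2(\Gamma_h)}$. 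By norm equivalence \eqref{eq:scalar-norm-equiv} with $m=0$, this is $\lesssim h^2\|\nbg u\|_{L^2(\gamma)}\|\bm\chi\|_{L^2(\gamma)}$.

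\textbf{Step 2 (principal term, quadratic normal dependence).} Expanding $\bP_h=\mathbf I-\bnu_h\otimes\bnu_h$ gives
\[
\bm\chi^e\cdot\bP_h\bm g^e=\bm\chi^e\cdot\bm g^e-(\bm\chi^e\cdot\bnu_h)(\bm g^e\cdot\bnu_h).
\]
Both $\bm\chi$ and $\bm g$ are tangential, so $\bm\chi^e\cdot\bnu_h=\bm\chi^e\cdot(\bnu_h-\bnu^e)$, and likewise for $\bm g^e$. The product is therefore pointwise bounded by $h^2|\bm\chi^e||\bm g^e|$, and its integral is $\lesssim h^2\|\bm g\|_{L^2(\gamma)}\|\bm\chi\|_{L^2(\gamma)}$ by Cauchy--Schwarz.

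Equivalently, this is Theorem~\ref{thm:normal-separated} applied with $\tilde F(y,\bz)=\bm\chi(y)\cdot\bm g(y)-(\bm\chi(y)\cdot\bz)(\bm g(y)\cdot\bz)$. For this choice, $D_\bz\tilde F(y,\bnu)=-(\bm\chi\cdot\bnu)\bm g-(\bm g\cdot\bnu)\bm\chi=0$. Thus $\bq_F=0$ and $q_F^\perp=0$, while $M_F\lesssim|\bm\chi||\bm g|$ and $F^\bnu=\bm\chi\cdot\bm g$, all of which lie in $L^1(\gamma)$.

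\textbf{Step 3 (area element).} It remains to compare the leading term with its counterpart on $\gamma$:
\[
\int_{\Gamma_h}\bm\chi^e\cdot\bm g^e\dif\sigma_h-\int_\gamma\bm\chi\cdot\bm g\dif\sigma=\int_\gamma(\mu_h^{-1}-1)\,\bm\chi\cdot\bm g\dif\sigma .
\]
Since $|\mu_h^{-1}-1|\lesssim h^2$, this is again $\lesssim h^2\|\bm\chi\|_{L^2(\gamma)}\|\nbg u\|_{L^2(\gamma)}$. Collecting Steps 1--3 gives \eqref{eq:first-order-tangential}.

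The only point needing care is Step 2. One must recognize that, in the normal-separated framework, the linear term of the Taylor expansion vanishes identically. This is why the estimate needs no regularity beyond $H^1$ for $u$ and $L^2$ for $\bm\chi$, in contrast with Corollary~\ref{cor:Phn}, which requires $\div_\gamma(\bP\bm\chi)$. All remaining steps are routine.
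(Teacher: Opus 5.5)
Your proof is correct and is essentially the paper's argument. The paper uses the same $\tilde F(y,\bz)=\bP_{\bz}\nabla u^e(y)\cdot\bm\chi(y)$, which coincides with your quadratic function because $\nabla u^e=\nbg u$ on $\gamma$. It then obtains $\bq_F=\bm 0$ and $q_F^\perp=0$, applies Theorem~\ref{thm:normal-separated}, and removes the $d\bH$ term exactly as in your Step~1. Your direct pointwise bound in Step~2, together with the area-element comparison in Step~3, is just that theorem unpacked in the degenerate case $\bq_F=\bm 0$, and it makes explicit that Lemma~\ref{lm:normal-to-divergence} is never needed here.
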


\begin{proof}
Set $\tilde{F}(y,\bz)=\bP_{\bz}\nabla u^e(y)\cdot \bm{\chi}(y)$, where $\bP_{\bz}=\mathbf{I}-\bz\otimes\bz$.
On $\gamma$, we have $\nabla u^e = (\bP - d\bH)(\nabla_\gamma u)^e = \nbg u$, hence both $\nabla u^e$ and $\bm{\chi}$ are tangential vector fields. Therefore, a direct calculation from the definitions in \eqref{def:geo-qM} gives
$$
F^\bnu(y) = \nbg u(y) \cdot \bm{\chi}(y), \quad \bq_F(y) = \bm{0}, \quad q_F^\perp(y) = 0, \quad M_F(y) \lesssim |\nbg u(y)|\,|\bm{\chi}(y)|.
$$
Applying Theorem~\ref{thm:normal-separated} and the Cauchy-Schwarz inequality directly gives
\[
\left|
\int_{\Gamma_h}
\bP_h(\nbg u)^e\cdot\bm{\chi}^e\dif\sigma_h
- \int_\gamma \nbg u\cdot \bm{\chi}\dif\sigma \right|
\lesssim
h^2 \|\nbg u\|_{L^2(\gamma)} \|\bm{\chi}\|_{L^2(\gamma)} .
\]
Finally, by $\nabla_{\Gamma_h}u^e=\bP_h(\bP-d\bH)(\nabla_\gamma u)^e$, $|d|\lesssim h^2$, and the norm equivalence, we have 
\[
\left|
\int_{\Gamma_h}
\left(
\nbG u^e-\bP_h(\nbg u)^e
\right)\cdot\bm{\chi}^e\dif\sigma_h
\right|\lesssim
h^2
\|\nbg u\|_{L^2(\gamma)}
\|\bm{\chi}\|_{L^2(\gamma)}.
\]
Combining the two estimates proves \eqref{eq:first-order-tangential}.
\end{proof}

\subsection{Applications to tangential tensors: interior consistency}\label{subsec:geo-tensor}
Guided by the preceding analysis (e.g., Corollary \ref{cor:first-order-tangential}), the construction of $\tilde{F}$ follows a systematic rule: within the definitions of the extended surface differential operators, we replace all the $\bnu$ with $\bz$, thereby explicitly isolating the normal-dependent components.
In this subsection, we present several interior consistency estimates obtained from Theorem \ref{thm:normal-separated} for second-order tensors; together they yield the Stokes-type tensor Green consistency directly applicable to the geometric error analysis of the surface Stokes equations in stream-function formulation. The first application recovers \cite[Lemma 5.4, (5.7)]{neilan2024c} in the present notation.

\begin{lemma}[surface trace-free Hessian consistency] \label{lem:neilan-volume-estimate}
Let $\gamma$ be a $C^3$ surface, let $\bA$ be a tangential second-order tensor on \(\gamma\), with components in \(H^1(\gamma)\), and let \(\psi\in H^3(\gamma)\). Then, it holds that
\begin{equation} \label{eq:A-Hg-HG-estimate1}
\left| (\HG\psi^e, \bA^e)_{\Gamma_h} - (\Hg \psi, \bA)_{\gamma} \right| \lesssim h^2  \|\psi \|_{H^3(\gamma)}\|\bA\|_{H^1(\gamma)}. 
\end{equation}
Consequently, 
\begin{equation} \label{eq:A-Hg-HG-estimate} 
\left| \bigl(\HG \psi^e - (\Hg \psi)^e, \bA^e \bigr)_{\Gamma_h} \right| \lesssim h^2 \|\psi \|_{H^3(\gamma)} \|\bA\|_{H^1(\gamma)}. 
\end{equation} 
\end{lemma}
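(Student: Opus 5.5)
We apply Theorem~\ref{thm:normal-separated} following the systematic rule stated at the start of this subsection: in the ambient expression of $\HG\psi^e$ we replace every $\bnu_h$ by $\bz$. On an element $K$, $\nabla_K^2\psi^e = \bP_h\nabla(\bP_h\nabla\psi^e)\bP_h = \bP_h \nabla^2\psi^e \bP_h$, since $\bP_h$ is constant on $K$. Combining this with \eqref{eq:H-Hessian} suggests
\[
\tilde F(y,\bz) := \mathrm{sym}\bigl(\bz^\times \bP_{\bz}\nabla^2\psi^e(y)\bP_{\bz}\bigr):\bA(y),\qquad \bP_{\bz}=\mathbf I-\bz\otimes\bz,
\]
with $\nabla^2\psi^e$ the ambient Hessian of the extension $\psi^e=\psi\circ\bp$ evaluated on $\gamma$. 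This $\tilde F$ is a polynomial of degree five in $\bz$. Then $F(\bp(x),\bnu_h^\ell(\bp(x)))$ equals $\HG\psi^e(x):\bA^e(x)$ up to replacing $\nabla^2\psi^e(x)$ by $(\nabla^2\psi^e)^e(x)$. Because $\nabla^2\psi^e$ varies along normal lines at rate $\lesssim|\psi|_{W^3}$-type quantities times $|d|\lesssim h^2$, this discrepancy contributes $O(h^2)\|\psi\|_{H^3}\|\bA\|_{L^2}$. Alternatively, one can expand $\nabla^2\psi^e = \bP\nabla_\gamma^2\psi\,\bP$ plus curvature terms multiplied by $d$ or $\nabla_\gamma\psi$, using \eqref{eq:nabla-trans}. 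At $\bz=\bnu$ we get $F^\bnu=\Hg\psi:\bA$, because on $\gamma$ we have $\bP\nabla^2\psi^e\bP=\nabla_\gamma^2\psi$.

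Next we compute the quantities in \eqref{def:geo-qM}. Since $\tilde F$ is polynomial in $\bz$ with coefficients bilinear in $(\nabla^2\psi^e,\bA)$, we get pointwise $|q_F^\perp|+|M_F|\lesssim |\nabla^2\psi^e||\bA|$, so their $L^1$ norms are $\lesssim\|\psi\|_{H^2}\|\bA\|_{L^2}$. The key term is $\bq_F=\bP D_\bz\tilde F(y,\bnu)$. Differentiating, the derivative either hits $\bz^\times$, giving a term like $\bP(\ldots)$ built from $\nabla_\gamma^2\psi$ and $\bA$, or hits a projector, producing factors $-(\delta\bz\otimes\bnu+\bnu\otimes\delta\bz)$. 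In the latter case $\nabla^2\psi^e\bnu$ appears, which on $\gamma$ equals $-\bH\nabla_\gamma\psi$ (from differentiating \eqref{eq:nabla-trans}), or $\bnu^\top\nabla^2\psi^e$, similarly. In every case $\bq_F$ is a tangential field that is bilinear in $(\nabla^2\psi^e|_\gamma,\bA)$ with $C^1$ geometric coefficients (here $\gamma\in C^3$ is used: $\bH\in C^1$). Hence
\[
\|\div_\gamma\bq_F\|_{L^1}\lesssim \|\psi\|_{H^3}\|\bA\|_{L^2}+\|\psi\|_{H^2}\|\bA\|_{H^1},
\]
by the product rule and Cauchy--Schwarz. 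Theorem~\ref{thm:normal-separated} then yields \eqref{eq:A-Hg-HG-estimate1} after all $L^1$ norms of products are bounded by products of $L^2$ norms.

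For \eqref{eq:A-Hg-HG-estimate}, we write
\[
((\Hg\psi)^e,\bA^e)_{\Gamma_h}-(\Hg\psi,\bA)_\gamma=\int_\gamma(\mu_h^{-1}-1)\Hg\psi:\bA\,\dif\sigma,
\]
which is $O(h^2)$ by $|1-\mu_h|\lesssim h^2$, and subtract this from \eqref{eq:A-Hg-HG-estimate1}.

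The main obstacle is the bookkeeping in $\bq_F$, specifically verifying that it involves only second derivatives of $\psi$ and first-order geometric quantities, so that its surface divergence costs exactly one more derivative: $H^3$ on $\psi$, $H^1$ on $\bA$, and $C^3$ on $\gamma$. We do not need to show that $\bq_F$ is small; only its divergence must be controlled. A second point needing care is the precise identification of $\HG\psi^e$ with $F(\cdot,\bnu_h^\ell)$, that is, justifying $\nabla_K^2\psi^e=\bP_h\nabla^2\psi^e\bP_h$ on flat elements and controlling the $O(d)$ error from evaluating $\nabla^2\psi^e$ off $\gamma$. This error requires $\psi\in H^3$ and is handled by a Taylor expansion along normals together with the norm equivalence \eqref{eq:scalar-norm-equiv}.
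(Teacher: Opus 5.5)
Your proof is correct, but it is organized differently from the paper's.

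\textbf{The paper's route.} The paper takes $\tilde F(y,\bz)=\mathrm{H}_{\bz}\psi(y):\bA(y)$, with $\mathrm{H}_{\bz}\psi$ built from $\nabla(\bP_{\bz}\nabla\psi^e)^e$, i.e.\ from the gradient of the closest-point extension of $\nabla_\gamma\psi$. This object lacks the normal-derivative part $-\bH\nabla_\gamma\psi\otimes\bnu$ of the ambient Hessian. Consequently, when the paper matches $F(\cdot,\bnu_h^\ell)$ with $\HG\psi^e:\bA^e$, it must differentiate $\bP_h(\mathbf I-d\bH)(\nabla_\gamma\psi)^e$. This produces two terms carrying a factor $d$, plus the term $I_3$ containing $\bP_h\nabla d=\bP_h\bnu^e$. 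That term is only $O(h)$ pointwise and needs a second invocation of the master estimate, through Corollary~\ref{cor:Phn}.

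\textbf{Your route.} You start from the exact elementwise identity $\HG\psi^e=\mathrm{sym}(\bnu_h^\times\bP_h\nabla^2\psi^e\bP_h)$. The $\bP_h\bnu$-contribution then already sits inside your $\tilde F$; it is precisely the $\bH\nabla_\gamma\psi$ part of your $\bq_F$. The only leftover is the shift of $\nabla^2\psi^e$ from $x$ to $\bp(x)$, which is genuinely $O(h^2)$ pointwise. So a single application of Theorem~\ref{thm:normal-separated} suffices.

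\textbf{What each buys.} You must verify the identities $\bP\nabla^2\psi^e\bP=\nabla_\gamma^2\psi$ and $\nabla^2\psi^e\bnu=-\bH\nabla_\gamma\psi$ on $\gamma$; both hold. The paper's choice gives $F^\bnu=\Hg\psi:\bA$ by construction and keeps the classical $\bP_h\bnu$ term explicitly visible.

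\textbf{Two minor sharpenings.} First, because $\psi^e$ is constant along normals, $\nabla^2\psi^e(x)-\nabla^2\psi^e(\bp(x))$ is bounded by $|d|$ times first and second surface derivatives of $\psi$ at $\bp(x)$, using only $\gamma\in C^3$. So no third derivatives of $\psi$ enter that step. Second, for tangential $\bA$ the contribution $\mathrm{sym}(\bm\xi^\times\nabla_\gamma^2\psi):\bA$ to $\bq_F\cdot\bm\xi$ vanishes, since the columns of $\bm\xi^\times\nabla_\gamma^2\psi$ are parallel to $\bnu$. Hence $\bq_F$ involves only $\bH\nabla_\gamma\psi$ and $\bA$, and your bound on $\|\div_\gamma\bq_F\|_{L^1(\gamma)}$ is conservative rather than sharp.
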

\begin{proof}
Since all the Hessian-type tensors involved are symmetric, we may assume without loss of generality that $\bA$ is symmetric. We choose $\tilde{F}(y,\bz) = \mathrm{H}_{\bz} \psi(y) : \bA(y)$, where
\[
\bP_{\bz}=\mathbf{I}-\bz\otimes\bz, \quad 
\bz^\times \bm\xi=\bz\times\bm\xi, \quad 
\mathrm{H}_{\bz}\psi (y) = \mathrm{sym}
\left(\bz^\times\nabla(\bP_{\bz}\nabla \psi^e)^e(y)\bP_{\bz}\right),
\]
and the extension is the closest point projection on $\gamma$.
Comparing with the definition of $\Hg$ in \eqref{eq:H-Hessian}, at $\bz=\bnu(y)$ we immediately have $\mathrm{H}_{\bnu}\psi=\Hg\psi$, whence $F^\bnu(y) = F(y, \bnu(y)) = \mathrm{H}_\gamma \psi(y) : \bA(y)$ for $y \in \gamma$.

Since $\bA$ and $\psi$ are independent of $\bz$, every $\bz$-derivative of $\tilde{F}$ falls on $\bP_{\bz}$ or $\bz^\times$ and leaves the order of differentiation in $\psi$ unchanged. Hence, the quantities $\bq_F$, $q_F^\perp$, and $M_F$ from \eqref{def:geo-qM} are finite sums of contractions of $\bA$, surface derivatives of $\psi$ of order at most two, and smooth coefficients built from $\bnu$ and the Weingarten map $\bH=\nabla_\gamma\bnu$, while $\div_\gamma\bq_F$ carries in addition one derivative of $\bA$, third derivatives of $\psi$, and one derivative of $\bH$. The last group is where the assumption $\gamma\in C^3$ is used: it guarantees $\|\nabla_\gamma\bH\|_{L^\infty(\gamma)}\lesssim 1$. Consequently,
\begin{equation*}
\begin{aligned}
\|F^\bnu\|_{L^1(\gamma)}
+\|\bq_F\|_{L^1(\gamma)}
+\|q_F^\perp\|_{L^1(\gamma)}
+\|M_{F}\|_{L^1(\gamma)}
&\lesssim
 \|\psi\|_{H^2(\gamma)} \|\bA\|_{L^2(\gamma)},\\
\|\div_\gamma\bq_F\|_{L^1(\gamma)}
&\lesssim
 \|\psi\|_{H^3(\gamma)} \|\bA\|_{H^1(\gamma)}.
\end{aligned}
\end{equation*}
A direct application of Theorem \ref{thm:normal-separated} (normal-separated geometric estimate) yields
\begin{equation} \label{eq:geo-stokes-calculus-bound}
\left|(\mathrm{H}_{\bnu_h^\ell}\psi (\bp(x)), \bA^e(x) )_{\Gamma_h} - (\mathrm{H}_\gamma\psi, \bA)_\gamma \right| \lesssim
 h^2 \|\psi\|_{H^3(\gamma)} \|\bA\|_{H^1(\gamma)},
\end{equation}
where, by the definition of $\mathrm{H}_{\bz}\psi$, we have for $x \in \Gamma_h$
\begin{equation} \label{eq:geo-stokes-Hnuh}
\begin{aligned}
\mathrm{H}_{\bnu_h^\ell}\psi (\bp(x)) : \bA^e(x) &= \bnu_h^\ell(\bp(x))^\times \nabla (\bP_{\bnu_h^\ell} \nabla \psi^e)^e(\bp(x)) \bP_{\bnu_h^\ell}(\bp(x)) : \bA^e(x)\\
&= \bnu_h(x)^\times \nabla (\bP_{\bnu_h^\ell} \nabla_\gamma\psi)^e(\bp(x)) \bP_h(x) : \bA^e(x)\\
&= -\nabla (\bP_{\bnu_h^\ell} \nabla_\gamma\psi)^e(\bp(x)) : \bnu_h(x)^\times \bA^e(x) \bP_h(x).
\end{aligned}
\end{equation}
Here, we utilized the identity $\bnu_h^\ell(\bp(x)) = \bnu_h(x)$ on $\Gamma_h$, and the fact that $\nabla\psi^e(y) = \nabla_\gamma\psi(y)$ for any $y \in \gamma$ due to the properties of the closest point projection. 

To compare $(\mathrm{H}_{\bnu_h^\ell}\psi (\bp(x)), \bA^e(x) )_{\Gamma_h}$ in \eqref{eq:geo-stokes-calculus-bound} with $(\mathrm{H}_{\Gamma_h}\psi^e, \bA^e)_{\Gamma_h}$ in \eqref{eq:A-Hg-HG-estimate1}, we first decompose $\mathrm{H}_{\Gamma_h}\psi^e(x) :\bA^e(x)$. By the definition of trace-free Hessian in \eqref{eq:H-Hessian}, $\mathrm{H}_{\Gamma_h}\psi^e(x) = \mathrm{sym}( \bnu_h^\times(x) \nabla(\bP_h \nabla \psi^e)^{e|_{\Gamma_h}}(x) \bP_h(x))$ for $x \in \Gamma_h$, where the outermost extension is taken arbitrarily with respect to $\Gamma_h$. Thus, for $x \in \Gamma_h$, we have
\begin{equation} \label{eq:Stokes-I123}
\begin{aligned}
\mathrm{H}_{\Gamma_h} \psi^e(x) : \bA^e(x) &= \bnu_h(x)^\times \nabla(\bP_h \nabla \psi^e)^{e|_{\Gamma_h}}(x) \bP_h(x) : \bA^e(x) \\
&= -\nabla\big(\bP_h(\mathbf{I} - d \bH)(\nabla_\gamma\psi)^e \big)^{e|_{\Gamma_h}}(x) : 
\bnu_h(x)^\times \bA^e(x) \bP_h(x) \\
&= -\nabla\big(\bP_h (\nabla_\gamma\psi)^e \big)^{e|_{\Gamma_h}}(x) : 
\bnu_h(x)^\times \bA^e(x) \bP_h(x) \\
& \quad + d(x) \nabla\big(\bP_h \bH (\nabla_\gamma\psi)^e \big)^{e|_{\Gamma_h}}(x) : \bnu_h(x)^\times \bA^e(x) \bP_h(x) \\
& \quad + \bP_h(x) \nabla d(x) \otimes \big(\bH (\nabla_\gamma\psi)^e \big)^{e|_{\Gamma_h}}(x) : \bnu_h(x)^\times \bA^e(x) \bP_h(x) \\
&:= I_1 + I_2 + I_3.
\end{aligned}
\end{equation}

{\it \underline{Estimate of $I_1$}: Pointwise high-order approximation of $\mathrm{H}_{\bnu_h^\ell}\psi (\bp(x)) : \bA^e(x)$.}
Observe from \eqref{eq:geo-stokes-Hnuh} that $(\bP_{\bnu_h^\ell} \nabla_\gamma \psi)^e|_{\Gamma_h} = \bP_h (\nabla_\gamma\psi)^e|_{\Gamma_h}$, which allows us to take $\big(\bP_h (\nabla_\gamma\psi)^e \big)^{e|_{\Gamma_h}} = (\bP_{\bnu_h^\ell} \nabla_\gamma \psi)^e$ in the term $I_1$. Consequently, denoting $\bm{w} := \bP_{\bnu_h^\ell} \nabla_\gamma \psi$, we obtain
$$
\begin{aligned}
& \nabla\big(\bP_h (\nabla_\gamma\psi)^e \big)^{e|_{\Gamma_h}}(x) - \nabla (\bP_{\bnu_h^\ell} \nabla_\gamma\psi)^e(\bp(x)) = \nabla \bm{w}^e(x) - \nabla \bm{w}^e(\bp(x)) \\
=~& (\nabla_\gamma \bm{w})^e(x) (\mathbf{I} - d(x) \bH(x)) - (\nabla_\gamma \bm{w})^e(x) = -d(x) (\nabla_\gamma \bm{w})^e(x) \bH(x).
\end{aligned}
$$
Using $|d| \lesssim h^2$ and the norm equivalence, we have 
\begin{equation}\label{eq:Stokes-H-I1}
\begin{aligned}
\left|(\mathrm{H}_{\bnu_h^\ell}\psi (\bp(x)), \bA^e(x) )_{\Gamma_h} - \int_{\Gamma_h} I_1\mathrm{d}\sigma_h \right| 
&\lesssim h^2 \sum_{K \in \mathcal{T}_h} \|\bm{w}\|_{H^1(K^\gamma)} \|\bA\|_{L^2(K^\gamma)} \\
&\lesssim h^2 \|\psi\|_{H^2(\gamma)} \|\bA\|_{L^2(\gamma)}.
\end{aligned}
\end{equation}

{\it \underline{Estimate of $I_2$}: Pointwise high-order bound.} Following a similar argument, we can choose 
$(\bP_h \bH (\nabla_\gamma\psi)^e)^{e|_{\Gamma_h}} = \bP_{\bnu_h^\ell} \bH (\nabla_\gamma\psi)^e$ in $I_2$. 
By utilizing the geometric bound $|d| \lesssim h^2$ and the norm equivalence once again, we directly obtain
\begin{equation*}
\left| \int_{\Gamma_h} I_2 \mathrm{d}\sigma_h \right| \lesssim h^2 \|\psi\|_{H^2(\gamma)} \|\bA\|_{L^2(\gamma)}.
\end{equation*}

{\it \underline{Estimate of $I_3$}: $\bP_h\bnu$-type estimate.} We take $\big(\bH (\nabla_\gamma\psi)^e \big)^{e|_{\Gamma_h}} = \bH (\nabla_\gamma\psi)^e$ in $I_3$. 
Since $|\bnu_h-\bnu^e|, |\bP_h\bnu^e | \lesssim h$, it follows that
\[
\begin{aligned}
\left| \int_{\Gamma_h} I_3 \mathrm{d}\sigma_h \right| & = \left| \int_{\Gamma_h} \bP_h \bnu \cdot (\bnu_h^\times \bA^e \bP_h \bH (\nabla_\gamma \psi)^e ) \right| \\
&\leq   \left| \int_{\Gamma_h} \bP_h \bnu \cdot (\bnu^\times \bA \bH \nabla_\gamma \psi)^e  \right| + C h^2\|\psi\|_{H^1(\gamma)} \|\bA\|_{L^2(\gamma)}.
\end{aligned}
\]
Apply Corollary \ref{cor:Phn} ($\bP_h\bnu$-type estimate) with $\bm{\chi} = \bnu^\times \bA \bH \nabla_\gamma \psi$, then 
\begin{equation} \label{eq:Stokes-H-I3}
\left| \int_{\Gamma_h} I_3 \mathrm{d}\sigma_h \right| \lesssim h^2 \|\bnu^\times \bA \bH \nabla_\gamma \psi \|_{W_1^1(\gamma)} + h^2\|\psi\|_{H^1(\gamma)} \|\bA\|_{L^2(\gamma)}
\lesssim h^2\|\psi\|_{H^2(\gamma)} \|\bA\|_{H^1(\gamma)}.
\end{equation}

Combining the estimates \eqref{eq:Stokes-H-I1}--\eqref{eq:Stokes-H-I3} with \eqref{eq:Stokes-I123}, we obtain
$$
\left|(\mathrm{H}_{\bnu_h^\ell}\psi (\bp(x)), \bA^e(x) )_{\Gamma_h} - (\mathrm{H}_{\Gamma_h} \psi^e(x), \bA^e(x))_{\Gamma_h} \right| \lesssim
 h^2 \|\psi\|_{H^2(\gamma)} \|\bA\|_{H^1(\gamma)}.
$$
Combining this bound with \eqref{eq:geo-stokes-calculus-bound} and the area element estimate $|\mu_h - 1| \lesssim h^2$, we arrive at the desired estimates \eqref{eq:A-Hg-HG-estimate1} and \eqref{eq:A-Hg-HG-estimate}.
\end{proof}

\begin{lemma}[surface curl--divergence consistency]\label{lem:stokes-curl-div}
Let $\gamma$ be a $C^3$ surface, let $\bA$ be a symmetric tangential second-order tensor on \(\gamma\), with components in \(H^1(\gamma)\), and let \(\psi\in H^3(\gamma)\).
Then, it holds that
\begin{equation}
\label{eq:stokes-curl-div}
\left|
(\curl_{\Gamma_h}\psi^e, \mathrm{div}_{\Gamma_h}\bA^e)_{\Gamma_h} - (\curl_\gamma \psi, \mathrm{div}_\gamma \bA)_\gamma \right|
\lesssim
h^2  \|\psi\|_{H^2(\gamma)} \|\bA\|_{H^1(\gamma)}.
\end{equation}
\end{lemma}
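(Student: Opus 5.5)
We follow the systematic rule stated at the beginning of this subsection and apply Theorem~\ref{thm:normal-separated} to an integrand in which every occurrence of $\bnu$ inside the surface operators is replaced by a free variable $\bz$. Note that the claimed bound has only $\|\psi\|_{H^2(\gamma)}$. A derivative of $\psi$ cannot be moved onto $\div_\gamma \bq_F$ without costing a third derivative. We therefore let $\psi$ enter only through first derivatives, and let the divergence of $\bA$ supply the other derivative, which is affordable since $\bA\in H^1$.

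\textbf{Defining the integrand.} We set
\[
\tilde F(y,\bz) := \bigl(\bz\times \bP_{\bz}\nabla\psi^e(y)\bigr)\cdot \mathrm{div}_{\bz}\bA(y),
\qquad
\mathrm{div}_{\bz}\bA := \bigl(\nabla(\bP_{\bz}\bA^e\bP_{\bz})^e : \bP_{\bz}\bigr)_{\text{row-wise}},
\]
so that $\tilde F(y,\bnu(y))=\curl_\gamma\psi\cdot\div_\gamma\bA$. The derivatives $\bq_F$, $q_F^\perp$ and $M_F$ are then contractions of $\nabla_\gamma\psi$, $\bA$, $\nabla_\gamma\bA$ and geometric coefficients. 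Their $L^1$ norms are bounded by $\|\psi\|_{H^1}\|\bA\|_{H^1}$.

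\textbf{The obstacle.} The term $\div_\gamma\bq_F$ would then contain second derivatives of $\bA$, which are not available. This is the main obstacle. I would avoid it in one of two ways.

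The first option is to integrate by parts on $\gamma$ and elementwise on $\Gamma_h$ first. Since $\div\curl=0$ and $\gamma$ is closed, the exact term equals $-(\nabla_\gamma\curl_\gamma\psi,\bA)_\gamma$, which is $-(\Hg\psi,\bA)_\gamma$ by symmetry of $\bA$ and the definition $\Hg=\mathrm{E}_\gamma\curl_\gamma$. On $\Gamma_h$, $\psi^e$ is globally continuous, so $\curl_{\Gamma_h}\psi^e\cdot\bt_h$ is single-valued up to the tangent orientation. Hence the discrete integration by parts gives $-(\HG\psi^e,\bA^e)_{\Gamma_h}$ plus edge terms
\[
\sum_e\int_e \bigl(\bA_1^e\bn_1+\bA_2^e\bn_2\bigr)\cdot \curl_{\Gamma_h}\psi^e ,
\]
together with a mismatch between the two one-sided curls. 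These edge terms involve $\bP\llbracket\bn_h\rrbracket=\mathcal O(h^2)$ (Lemma~\ref{lm:Pnjump}) and the normal component $\bnu\cdot\llbracket\bn_h\rrbracket$. The latter pairs with $\bA^e\bnu\cdot(\dots)=\mathcal O(h)$, because $\bA$ is tangential and $\curl_{\Gamma_h}\psi^e$ is $\bP_h$-tangential, so that product is also $\mathcal O(h^2)$ pointwise. A trace inequality $\sum_e\|w\|_{L^1(e)}\lesssim h^{-1}\|w\|_{L^1}+\|w\|_{W^1_1}$ then bounds these terms. The volume part is handled by Lemma~\ref{lem:neilan-volume-estimate}, but that lemma costs $\|\psi\|_{H^3}$.

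The second option achieves the sharper $H^2$ dependence and is the one I would pursue. I would instead apply Theorem~\ref{thm:normal-separated} to the integrated-by-parts form $\tilde F=\nabla(\bz\times\bP_{\bz}\nabla\psi^e)^e\bP_{\bz}:\bA$. Its $\bz$-derivatives contain $\nabla^2\psi$ and $\bA$ only, so $\div_\gamma\bq_F$ requires $\nabla^3\psi$ — again too much.

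I therefore expect the honest route to be a hybrid. Apply the theorem with $\bq_F$ built from $\nabla_\gamma\psi$ and $\nabla_\gamma\bA$. Write $\bq_F=\bm{B}\,\nabla_\gamma\psi$ plus terms, where $\bm B$ is linear in $\nabla_\gamma\bA$. Use only its weak divergence: in Lemma~\ref{lm:normal-to-divergence} the quantity actually needed is $\int_\gamma\bq\cdot\nabla_\gamma\rho_h$ with $|\rho_h|\lesssim h^2$ and $|\nabla_\gamma\rho_h|\lesssim h$. Integrating by parts only the part of $\bq_F$ that contains $\nabla_\gamma\psi$ would move one derivative onto $\psi$ (now $H^2$) and $\rho_h$. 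This keeps second derivatives off $\bA$.

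The remaining pointwise comparisons between the $\bnu_h^\ell$-evaluated integrand and $\curl_{\Gamma_h}\psi^e\cdot\div_{\Gamma_h}\bA^e$ are handled exactly as $I_1$, $I_2$, $I_3$ in Lemma~\ref{lem:neilan-volume-estimate}: the $d$-terms are $\mathcal O(h^2)$, and the $\bP_h\bnu^e$-term is handled by Corollary~\ref{cor:Phn} with $\bm\chi$ involving $\nabla_\gamma\psi\otimes\bA$, whose $W^1_1$ norm is bounded by $\|\psi\|_{H^2}\|\bA\|_{H^1}$. The step I expect to be delicate is this refined use of Lemma~\ref{lm:normal-to-divergence} in weak form, avoiding $\div_\gamma\bq_F$ in $L^1$.
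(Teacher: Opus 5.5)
There is a genuine gap, and it sits at the step you call the main obstacle. That obstacle does not exist, and the hybrid workaround you propose for it would not close the argument even if it did.

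\textbf{The missing idea.} Compute $\bq_F$ explicitly at $\bz=\bnu$ rather than listing which derivatives it might contain. Take, as the paper does, $\tilde F(y,\bz)=(\bz\times\nabla\psi^e)\cdot\div_{\bz}\bA$ with $(\div_{\bz}\bA)_i=\tr(\bP_{\bz}\nabla\bA_i^e)$.

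For tangential $\bm\xi$, the derivative of the divergence factor is $-\bnu\cdot(\nabla\bA_i^e)\bm\xi-\bm\xi\cdot(\nabla\bA_i^e)\bnu$. The second term vanishes for the closest-point extension. Differentiating $\bA_i\cdot\bnu=0$ turns the first term into $\bA_i\cdot\bH\bm\xi$.

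The derivative of the curl factor is $\bm\xi\times\nabla_\gamma\psi=-(\bm\xi\cdot\curl_\gamma\psi)\bnu$. It therefore only sees the normal component $\bnu\cdot\div_\gamma\bA=-\bA:\bH$. Hence
\[
\bq_F=\bigl[(\bA:\bH)\mathbf I+\bH\bA\bigr]\curl_\gamma\psi ,
\]
which contains no derivative of $\bA$ and only first derivatives of $\psi$. Tangentiality of $\bA$ converts every normal derivative of $\bA$ into $\bA\bH$. The same cancellation occurs for your $\tilde F$ with the extra projections.

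Consequently $\|\div_\gamma\bq_F\|_{L^1(\gamma)}\lesssim\|\psi\|_{H^2(\gamma)}\|\bA\|_{H^1(\gamma)}$, using $\gamma\in C^3$ to control $\nabla_\gamma\bH$. Theorem~\ref{thm:normal-separated} then applies directly. $\nabla\bA$ enters only through $q_F^\perp$ and $M_F$, which are needed merely in $L^1$.

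The last step is also simpler than you expect. The integrand evaluated at $\bp(x)$ differs from $\curl_{\Gamma_h}\psi^e\cdot\div_{\Gamma_h}\bA^e$ at $x$ only through $\nabla v^e(x)-\nabla v^e(\bp(x))=-d\bH(\nabla_\gamma v)^e$. This is pointwise $\mathcal O(h^2)$, so no $\bP_h\bnu$ term arises.

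\textbf{Why the hybrid fails.} In Lemma~\ref{lm:normal-to-divergence}, the gain from $h$ to $h^2$ comes only from moving the derivative off $\rho_h$. This trades $|\nabla_\gamma\rho_h|\lesssim h$ for $|\rho_h|\lesssim h^2$, and the derivative necessarily lands on all of $\bq_F$, including any factor linear in $\nabla_\gamma\bA$. You cannot integrate by parts only the part containing $\nabla_\gamma\psi$.

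Moving the derivative off $\psi$ instead puts it on $\bm B^\top\nabla_\gamma\rho_h$. That requires $\nabla\bm B$, i.e.\ second derivatives of $\bA$ again, and second derivatives of $\rho_h$. The latter are $\mathcal O(1)$, and $\nabla_\gamma\rho_h$ jumps across the lifted edges. So if $\bq_F$ really contained $\nabla_\gamma\bA$, your route would stall at $\mathcal O(h)$.

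Your first option yields only $\|\psi\|_{H^3(\gamma)}$, which is weaker than the statement. It also has a slip. On $\Gamma_h$ it is $\curl_{\Gamma_h}\psi^e\cdot\bn_K=-\nabla_{\Gamma_h}\psi^e\cdot\bt_K$ that is single-valued up to sign. By contrast, $\curl_{\Gamma_h}\psi^e\cdot\bt_K=\nabla_{\Gamma_h}\psi^e\cdot\bn_K$ is not.
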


\begin{proof}
We choose $\tilde{F}(y,\bz) = \curl_{\bz}\psi(y) \cdot {\rm div}_{\bz}\bA(y)$, where
\[
\begin{aligned}
\curl_{\bz}\psi &= \bz^\times\nabla\psi^e,\quad \bz^\times \bm\xi=\bz\times\bm\xi, \\
(\div_{\bz}\bA)_i &= \tr\!\left(\bP_{\bz}\nabla \bA_i^e\right),  \quad \bP_{\bz}=\mathbf{I}-\bz\otimes\bz.
\end{aligned}
\]
Here, $\bA_i^e$ denotes the $i$-th row of $\bA^e$.   Since \(\bA\) and \(\psi\) do not depend on \(\bz\), every $\bz$-derivative of $\tilde{F}$ falls on $\bP_{\bz}$ or $\bz^\times$ and leaves the order of differentiation in $\psi$ unchanged. Hence, it is straightforward that the quantities defined in \eqref{def:geo-qM} satisfy 
\begin{equation}\label{eq:stokes-calculus-bound}
\|F^\bnu\|_{L^1(\gamma)}
+\|\bq_F\|_{L^1(\gamma)}
+\|q_F^\perp\|_{L^1(\gamma)}
+\|M_{F}\|_{L^1(\gamma)}
\lesssim
 \|\psi\|_{H^1(\gamma)} \|\bA\|_{H^1(\gamma)}.
\end{equation}

Next, we provide a more precise characterization of the tangential vector field $\bq_F(y) := \bP(y) D_{\bz} \tilde{F}(y, \bnu(y))$. For any tangent vector $\bm\xi \perp \bnu$, the component of $\bq_F$ along $\bm\xi$ can be decomposed as $\bq_F \cdot \bm\xi = \bq_1 \cdot \bm\xi + \bq_2 \cdot \bm\xi$,
where
\begin{equation}\label{eq:stokes-curl-div-q12}
\begin{aligned}
\bq_1(y) \cdot\bm\xi
&:=
D_\bz\curl_{\bz}\psi(y)\big|_{\bz=\bnu}
[\bm\xi]\cdot\divg \bA(y),
\\
\bq_2(y) \cdot\bm\xi
&:=
\curl_\gamma\psi\cdot
D_\bz
\div_{\bz}\bA(y)\big|_{\bz=\bnu}
[\bm\xi].
\end{aligned}
\end{equation}
For $\bq_1$ on $\gamma$, note that $\curl_{\bz}\psi=\bz^\times\nabla\psi^e$ is linear in $\bz$, so
$D_\bz\curl_{\bz}\psi \big|_{\bz=\bnu}[\bm\xi]=\bm\xi\times\nabla_\gamma\psi$ and hence
$\bm{q}_1 \cdot\bm\xi = (\bm\xi\times\nabla_\gamma\psi) \cdot \divg \bA$.
Since $\bm\xi$ and $\nabla_\gamma\psi$ are both tangential, their cross product satisfies $\bm\xi\times\nabla_\gamma\psi
 = -(\bm\xi\cdot\curl_\gamma\psi)\,\bnu$.
Moreover, differentiating $\bnu^\top\bA=0$ yields the surface identity $\divg\bA\cdot\bnu=-\bA:\nabla_\gamma\bnu$.
Combining these two facts,
\[
\begin{aligned}
    \bm{q}_1 \cdot\bm\xi
    &= -(\bm\xi\cdot\curl_\gamma\psi)(\divg \bA\cdot\bnu)
    =(\bm\xi\cdot\curl_\gamma\psi)(\bA:\nabla_\gamma\bnu).
\end{aligned}
\]
For $\bq_2$ on $\gamma$, since $(\div_{\bz}\bA)_i=\tr(\bP_{\bz}\nabla\bA_i^e)$ with $D_\bz\bP_{\bz}[\bm\xi]=-(\bm\xi\otimes\bz+\bz\otimes\bm\xi)$, we have
\[
D_\bz(\div_{\bz}\bA)_i\big|_{\bz=\bnu}[\bm\xi]
=
-\tr\!\big(
(\bm\xi\otimes\bnu+\bnu\otimes\bm\xi)
\nabla \bA_i^e
\big)
=
-\bnu\cdot(\nabla \bA_i^e)\bm\xi - \bm\xi\cdot(\nabla \bA_i^e)\bnu .
\]
The second term vanishes because $\bA_i^e$ is the closest-point extension, so $(\nabla \bA_i^e)\bnu=0$; and differentiating $\bA_i\cdot\bnu=0$ along $\bm\xi$ gives $-\partial_\xi \bA_i^e\cdot\bnu=\bA_i^e\cdot\partial_\xi\bnu$. Hence $D_\bz(\div_{\bz}\bA)_i\big|_{\bz=\bnu}[\bm\xi]=\bA_i^e\cdot\partial_\xi\bnu$, that is, $D_\bz\div_{\bz}\bA\big|_{\bz=\bnu}[\bm\xi]=\bA \nabla_\gamma\bnu \bm\xi$, and therefore
\[
    \bm{q}_2 \cdot\bm\xi = \curl_\gamma\psi\cdot\bA\nabla_\gamma\bnu\,\bm\xi .
\]

Substituting the expressions for $\bm{q}_i \cdot\bm\xi$ $(i=1,2)$ into \eqref{eq:stokes-curl-div-q12}, and recalling that $\nabla_\gamma\bnu=\bH$ is the (symmetric) Weingarten map, we obtain
\begin{equation}\label{eq:stokes-qF}
\bq_F = (\bA:\bH)\,\curl_\gamma\psi + \bH\bA\,\curl_\gamma \psi
= \big[(\bA:\bH)\,\mathbf{I} + \bH\bA\big]\curl_\gamma\psi.
\end{equation}
Thus $\bq_F$ is a finite sum of contractions of $\bA$ with $\curl_\gamma\psi=\bnu\times\nabla_\gamma\psi$ and the curvature coefficients in $\bH$. Its surface divergence therefore involves at most $\nabla_\gamma\bA$ paired with $\nabla_\gamma\psi$, $\bA$ paired with $\nabla_\gamma^2\psi$, and $\nabla_\gamma\bH$ paired with $\bA$ and $\nabla_\gamma\psi$. The last group is where the assumption $\gamma\in C^3$ is used: it guarantees $\|\nabla_\gamma\bH\|_{L^\infty(\gamma)} \lesssim 1$. Consequently,
\begin{equation}\label{eq:stokes-div-q-low-order}
        \|\divg\bq_F\|_{L^1(\gamma)}
        \lesssim
        \|\psi\|_{H^2(\gamma)}         \|\bA\|_{H^1(\gamma)}.
\end{equation}

Thanks to \eqref{eq:stokes-calculus-bound} and \eqref{eq:stokes-div-q-low-order}, a direct application of Theorem \ref{thm:normal-separated} (normal-separated geometric estimate) gives 
\begin{equation} \label{eq:stokes-curl-div-1}
\left| \big( \curl_{\bnu_h^\ell} \psi (\bp(x)), \div_{\bnu_h^\ell}\bA(\bp(x)) \big)_{\Gamma_h} - (\curl_\gamma \psi, \div_{\gamma} \bA)_\gamma \right| \lesssim
 h^2 \|\psi\|_{H^2(\gamma)} \|\bA\|_{H^1(\gamma)},
\end{equation}
where 
$$
\begin{aligned}
\curl_{\bnu_h^\ell} \psi (\bp(x)) = \bnu_h^\times \nabla \psi^e (\bp(x)), \quad (\div_{\bnu_h^\ell}\bA)_i (\bp(x))  = \tr\!\left(\bP_{h} \nabla \bA_i^e (\bp(x)) \right).
\end{aligned}
$$ 

It remains to replace the operators evaluated at the projected point $y = \bp(x)$ by the discrete surface operators evaluated at $x \in \Gamma_h$. Recall that $\curl_{\Gamma_h}\psi^e(x) = \bnu_h(x)^\times \nabla\psi^e(x)$ and $(\div_{\Gamma_h}\bA^e)_i(x) = \tr(\bP_h(x) \nabla\bA_i^e(x))$; these differ from $\curl_{\bnu_h^\ell}\psi(\bp(x))$ and $(\div_{\bnu_h^\ell}\bA)_i(\bp(x))$ only in that the ambient gradient is evaluated at $x$ rather than at $\bp(x)$, since the normal factors coincide, $\bnu_h^\ell(\bp(x)) = \bnu_h(x)$ and $\bP_{\bnu_h^\ell}(\bp(x)) = \bP_h(x)$. By \eqref{eq:nabla-trans}, for any $v \in H^1(\gamma)$,
\[
\nabla v^e(x) - \nabla v^e(\bp(x))
= (\bP - d\bH)(\nabla_\gamma v)^e(x) - (\nabla_\gamma v)^e(x)
= -d(x)\bH (\nabla_\gamma v)^e(x),
\]
so that $|d| \lesssim h^2$ yields the pointwise bounds
\[
\begin{aligned}
\left|\curl_{\Gamma_h}\psi^e(x) - \curl_{\bnu_h^\ell}\psi(\bp(x))\right| &\lesssim h^2 |(\nabla_\gamma\psi)^e(x)|, \\
\left|\div_{\Gamma_h}\bA^e(x) - \div_{\bnu_h^\ell}\bA(\bp(x))\right| &\lesssim h^2 |(\nabla_\gamma\bA)^e(x)|.
\end{aligned}
\]
Then, the Cauchy--Schwarz inequality together with the norm equivalence \eqref{eq:scalar-norm-equiv} gives
\[
\left|(\curl_{\Gamma_h}\psi^e, \div_{\Gamma_h}\bA^e)_{\Gamma_h}
- \big(\curl_{\bnu_h^\ell}\psi(\bp(x)), \div_{\bnu_h^\ell}\bA(\bp(x))\big)_{\Gamma_h}\right|
\lesssim h^2 \|\psi\|_{H^1(\gamma)} \|\bA\|_{H^1(\gamma)}.
\]
Combining this with \eqref{eq:stokes-curl-div-1} yields the desired estimate \eqref{eq:stokes-curl-div}.
\end{proof}

\begin{corollary}[Stokes-type tensor Green consistency] \label{cor:stokes-tensor-green}
Let $\gamma$ be a $C^3$ surface, let $\bA$ be a symmetric tangential second-order tensor on \(\gamma\), with components in \(H^1(\gamma)\), and let \(\psi\in H^3(\gamma)\). Then,
\begin{equation} \label{eq:jh-extension}
\left| (\HG\psi^e, \bA^e)_{\Gamma_h} + (\curl_{\Gamma_h}\psi^e, \mathrm{div}_{\Gamma_h}\bA^e)_{\Gamma_h} \right|
\lesssim h^2 \|\psi\|_{H^3(\gamma)} \|\bA\|_{H^1(\gamma)}.
\end{equation}
\end{corollary}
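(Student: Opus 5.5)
The plan is to reduce the corollary to its exact-surface counterpart by means of the two consistency lemmas just proved. By Lemma~\ref{lem:neilan-volume-estimate}, specifically \eqref{eq:A-Hg-HG-estimate1}, the discrete pairing $(\HG\psi^e,\bA^e)_{\Gamma_h}$ differs from $(\Hg\psi,\bA)_\gamma$ by at most $Ch^2\|\psi\|_{H^3(\gamma)}\|\bA\|_{H^1(\gamma)}$. By Lemma~\ref{lem:stokes-curl-div}, $(\curl_{\Gamma_h}\psi^e,\div_{\Gamma_h}\bA^e)_{\Gamma_h}$ differs from $(\curl_\gamma\psi,\div_\gamma\bA)_\gamma$ by at most $Ch^2\|\psi\|_{H^2(\gamma)}\|\bA\|_{H^1(\gamma)}$. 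The triangle inequality then leaves us to show the exact identity
\[
(\Hg\psi,\bA)_\gamma+(\curl_\gamma\psi,\div_\gamma\bA)_\gamma=0
\]
on the closed surface $\gamma$, for every symmetric tangential $\bA$ with $H^1$ components.

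To prove this identity, we use $\Hg\psi=\mathrm{E}_\gamma(\curl_\gamma\psi)$. Since $\bA$ is symmetric, $(\mathrm{E}_\gamma\bm v,\bA)_\gamma=(\nabla_\gamma\bm v,\bA)_\gamma$ with $\bm v:=\curl_\gamma\psi$. Because $\bA$ is tangential, we have $\bA=\bP\bA\bP$. Hence $\nabla_\gamma\bm v:\bA=\nabla\bm v^e:\bA$ on $\gamma$, which equals $\sum_i\nabla_\gamma v_i\cdot\bA_i$, where $\bA_i$ is the $i$-th row of $\bA$.

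Integrating by parts componentwise on the closed surface gives
\[
\int_\gamma\nabla_\gamma v_i\cdot\bA_i\,\dif\sigma=-\int_\gamma v_i\,\div_\gamma\bA_i\,\dif\sigma+\int_\gamma v_i\,(\bA_i\cdot\bnu)\,\div_\gamma\bnu\,\dif\sigma.
\]
The curvature term vanishes because each row $\bA_i$ is tangential: $\bA_i\cdot\bnu=(\bA\bnu)_i=0$, using the symmetry of $\bA$. Summing over $i$ yields $(\nabla_\gamma\bm v,\bA)_\gamma=-(\bm v,\div_\gamma\bA)_\gamma$, which is the desired identity. Regularity is not an issue, since $\bm v\in\bm H^2(\gamma)$ and $\bA\in\bm H^1$, so a density argument justifies the integration by parts.

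The main obstacle is already absorbed by the two preceding lemmas, in particular the second-order consistency of the curl--divergence pairing via the normal-separated estimate. In the remaining step, the only care needed is to use the row-tangentiality $\bA\bnu=0$ together with symmetry, so that no boundary-free curvature remainder survives the integration by parts. We also note that the right-hand side of the corollary takes the larger of the two norms, namely $\|\psi\|_{H^3(\gamma)}$, which comes from Lemma~\ref{lem:neilan-volume-estimate}.
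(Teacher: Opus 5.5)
Your proposal is correct and follows the paper's proof: add \eqref{eq:A-Hg-HG-estimate1} and \eqref{eq:stokes-curl-div}, then note that the two exact-surface pairings cancel by integration by parts on the closed surface $\gamma$. Your componentwise verification of that cancellation, where the curvature term vanishes because $\bA\bnu=0$ (this follows from tangentiality alone, without symmetry), makes explicit the step the paper states in one line.
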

\begin{proof}
Adding the estimates \eqref{eq:A-Hg-HG-estimate1} and \eqref{eq:stokes-curl-div} from Lemma~\ref{lem:neilan-volume-estimate} and Lemma~\ref{lem:stokes-curl-div} gives
\[
\begin{aligned}
& \quad (\HG\psi^e, \bA^e)_{\Gamma_h} + (\curl_{\Gamma_h}\psi^e, \div_{\Gamma_h}\bA^e)_{\Gamma_h} \\
 & =   (\Hg\psi, \bA)_\gamma + (\curl_\gamma\psi, \div_\gamma\bA)_\gamma
+ \mathcal{O}\!\left(h^2 \|\psi\|_{H^3(\gamma)} \|\bA\|_{H^1(\gamma)}\right).
\end{aligned}
\]
The two continuous inner products cancel due to integration by parts on the closed surface $\gamma$, which proves \eqref{eq:jh-extension}.
\end{proof}

\subsection{Applications to tangential tensors: boundary-flux consistency}\label{subsec:geo-boundary}

We finally turn to the skeleton counterpart of the interior consistency of
Section~\ref{subsec:geo-tensor}. The deliverable is the conormal-flux
consistency of Corollary~\ref{cor:conormal-flux}, used in the error
analysis of Section~\ref{sec:analysis}. It rests on a single auxiliary
tool---a weak conormal-flux estimate for a tangential vector field---to
which we turn first.

\begin{lemma}[weak conormal-flux consistency]
\label{lem:weak-conormal-flux}
Let $\gamma$ be a $C^3$ surface,
let \(\bm\chi\) be a tangential vector field on \(\gamma\), with
\(\bm\chi\in \bm{W}_1^1(\gamma)\). Then
\[
\left|
\sum_{K\in\mathcal T_h}
\int_{\partial K}
\bm\chi^e\cdot\bm n_K\,\dif s
\right|
\lesssim
h^2
\|\bm\chi\|_{W_1^1(\gamma)} .
\]
\end{lemma}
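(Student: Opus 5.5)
Two routes suggest themselves: reorganise the boundary sum edge by edge, or convert it into an element-interior integral and apply the interior estimates already proved. I would take the second, because the sum is exactly a discrete divergence total. On each flat triangle $K$ the divergence theorem for $\div_{\Gamma_h}$ gives
\[
\int_{\partial K}\bm\chi^e\cdot\bm n_K\dif s=\int_K \div_{\Gamma_h}\bm\chi^e\dif\sigma_h ,
\]
since on a planar element the tangential divergence of the ambient field $\bm\chi^e$ is $\tr(\bP_K\nabla\bm\chi^e)$, and the normal component $(\bm\chi^e\cdot\bnu_K)\bnu_K$ is a constant multiple of a constant vector. Summing over $K$, the quantity to bound is $\int_{\Gamma_h}\div_{\Gamma_h}\bm\chi^e\dif\sigma_h$, whose continuous counterpart $\int_\gamma\div_\gamma\bm\chi\dif\sigma$ vanishes because $\gamma$ is closed and $\bm\chi$ is tangential.

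Next I would apply Theorem~\ref{thm:normal-separated} with $\tilde F(y,\bz)=\tr(\bP_\bz\nabla\bm\chi^e(y))$, the same rule used for $\div_\bz\bA$ in Lemma~\ref{lem:stokes-curl-div}. At $\bz=\bnu$ this gives $F^\bnu=\div_\gamma\bm\chi\in L^1$. The computation in Lemma~\ref{lem:stokes-curl-div}, using the closest-point extension ($(\nabla\bm\chi^e)\bnu=0$) and differentiation of $\bm\chi\cdot\bnu=0$, should give $\bq_F=\bH\bm\chi$. Since $\tilde F$ is quadratic in $\bz$, $q_F^\perp$ and $M_F$ are bounded pointwise by $|\nabla_\gamma\bm\chi|$, while $\div_\gamma\bq_F$ involves $\nabla_\gamma\bm\chi$ and $\nabla_\gamma\bH\,\bm\chi$. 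With $\gamma\in C^3$, all of these are controlled in $L^1$ by $\|\bm\chi\|_{W^1_1(\gamma)}$. The theorem then yields
\[
\Bigl|\int_{\Gamma_h}\div_{\bnu_h^\ell}\bm\chi(\bp(x))\dif\sigma_h\Bigr|\lesssim h^2\|\bm\chi\|_{W_1^1(\gamma)} .
\]

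It remains to replace $\div_{\bnu_h^\ell}\bm\chi(\bp(x))$ by $\div_{\Gamma_h}\bm\chi^e(x)$. As in Lemma~\ref{lem:stokes-curl-div}, $\nabla\bm\chi^e(x)-\nabla\bm\chi^e(\bp(x))=-d\,(\nabla_\gamma\bm\chi)^e\bH$, which is $\mathcal O(h^2)|(\nabla_\gamma\bm\chi)^e|$ pointwise. Together with the norm equivalence, this contributes $h^2\|\bm\chi\|_{W^1_1(\gamma)}$ and completes the bound.

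The main obstacle I expect is the low regularity: $\bm\chi\in\bm W^1_1$ only, so $\nabla\bm\chi$ is merely $L^1$. I must check two things. First, the elementwise divergence theorem and the edge traces make sense; they should, by density of smooth fields and continuity of the trace $W^1_1(K)\to L^1(\partial K)$. Second, $\div_\gamma\bq_F$ must not require second derivatives of $\bm\chi$. It does not, because $\bq_F=\bH\bm\chi$ contains no derivative of $\bm\chi$. If that computation instead produced a $\nabla\bm\chi$ term in $\bq_F$, I would fall back to Lemma~\ref{lm:normal-to-divergence} and Corollary~\ref{cor:Phn} applied directly to the normal-dependent part $\bP_K\nabla\bm\chi^e$, rewritten via $\bP_h\bnu^e$.
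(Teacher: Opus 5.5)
Your proposal is correct and follows essentially the same route as the paper: the elementwise Green formula reduces the sum to $\int_{\Gamma_h}\div_{\Gamma_h}\bm\chi^e\dif\sigma_h$, then Theorem~\ref{thm:normal-separated} is applied with $\tilde F(y,\bz)=\tr(\bP_{\bz}\nabla\bm\chi^e(y))$ and $\bq_F=\bH\bm\chi$, using $\int_\gamma\div_\gamma\bm\chi\dif\sigma=0$ and an $\mathcal O(h^2)$ pointwise correction for evaluating $\nabla\bm\chi^e$ at $x$ rather than at $\bp(x)$. Two small remarks: $q_F^\perp=-2\,\bnu\cdot(\nabla\bm\chi^e)\bnu$ actually vanishes for the closest-point extension (your bound by $|\nabla_\gamma\bm\chi|$ is weaker but suffices), and the normal part $(\bm\chi^e\cdot\bnu_K)\bnu_K$ is a variable scalar times $\bnu_K$, not a constant vector, so it drops out of the Green formula because $\bP_K\bnu_K=0$ and $\bnu_K\cdot\bm n_K=0$.
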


\begin{proof}
The elementwise Green's formula gives
\[
\sum_{K\in\mathcal T_h}
\int_{\partial K}
\bm\chi^e\cdot\bm n_K\,\dif s
=
\int_{\Gamma_h}
\div_{\Gamma_h}\bm\chi^e\,\dif\sigma_h .
\]
We then set $\tilde{F}(y,\bz) := \tr(\bP_{\bz}\nabla\bm\chi^e(y))$, which satisfies
\[
    F(y,\bnu)=\div_\gamma\bm\chi,
        \quad
        F(y,\bnu_h^\ell(y))|_{ y = \bp(x)}
        =
        \div_{\Gamma_h}\bm\chi^e(x)
        + \mathcal{O}(h^2|(\nabla_\gamma\bm\chi)^e(x)|).
\]
Since \(\gamma\) is closed and
\(\bm\chi\) is tangential, it holds that $\int_\gamma F(y,\bnu)\dif\sigma =
\int_\gamma\div_\gamma\bm\chi\dif\sigma = 0$.

Following the same computation as in Lemma~\ref{lem:stokes-curl-div}, we determine the tangential vector field $\bq_F(y) := \bP(y) D_{\bz} \tilde{F}(y, \bnu(y))$ explicitly. Indeed, for every $\bm{\xi}\perp\bnu$,
\[
D_\bz F(y,\bnu)[\bm{\xi}]
= \tr\!\left(
-(\bm{\xi}\otimes\bnu+\bnu\otimes\bm{\xi})\nabla {\bm\chi}^e
\right)
=
-\bnu\cdot(\nabla {\bm\chi}^e)\bm{\xi},
\]
where we use \((\nabla {\bm\chi}^e)\bnu=0\) since \({\bm\chi}^e\) is the closest-point extension.
Moreover, \({\bm\chi}\cdot\bnu=0\) on \(\gamma\), and hence
\[
        \bnu\cdot(\nabla {\bm\chi}^e)\bm{\xi}
        =
        \partial_\xi {\bm\chi}\cdot\bnu
        =
        -{\bm\chi}\cdot\partial_\xi\bnu
        =
        -{\bm\chi}\cdot\bH\bm{\xi}.
\]
Combining the two displays gives $D_\bz F(y,\bnu)[\bm{\xi}] = {\bm\chi}\cdot\bH\bm{\xi}$, so that $\bq_F = \bH\bm{\chi}$ and $q_F^\perp = 0$. Consequently, $\div_\gamma\bq_F = \div_\gamma(\bH\bm\chi)$ involves one derivative of the Weingarten map $\bH$; the assumption $\gamma\in C^3$ guarantees $\|\nabla_\gamma\bH\|_{L^\infty(\gamma)} \lesssim 1$, whence
\[
\|F^\bnu\|_{L^1(\gamma)} + \|{\rm div}_\gamma\bq_F\|_{L^1(\gamma)} + h\|\bq_F\|_{L^1(\gamma)} + \|q_F^\perp\|_{L^1(\gamma)} + \|M_F\|_{L^1(\gamma)}
\lesssim \|\bm\chi\|_{W_1^1(\gamma)} .
\]
Theorem~\ref{thm:normal-separated} (normal-separated geometric estimate) then gives the desired estimate.
\end{proof}

The single estimate needed in the error analysis is the following specialization.

\begin{corollary}[conormal-flux consistency]\label{cor:conormal-flux}
Let $\gamma$ be a $C^3$ surface,
let \(\bA\) be a symmetric tangential two-tensor on \(\gamma\), whose
components belong to \(H^1(\gamma)\), and let \(\psi\in H^2(\gamma)\).  Then
\begin{equation}\label{eq:geo-trace-2}
\left|
\langle(\curl_\gamma\psi)^e, \llbracket \bA^e\bm n_h\rrbracket\rangle_{\mathcal E_h}
\right|
\lesssim
h^2 \|\psi\|_{H^2(\gamma)} \|\bA\|_{H^1(\gamma)}.
\end{equation}
\end{corollary}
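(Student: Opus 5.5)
The plan is to reduce Corollary~\ref{cor:conormal-flux} to Lemma~\ref{lem:weak-conormal-flux} by rewriting the skeleton sum elementwise. On each interior edge the jump $\llbracket \bA^e\bm n_h\rrbracket=\bA^e\bm n_1+\bA^e\bm n_2$ involves a single-valued field $\bA^e$, since the closest-point extension is continuous across edges. Likewise $(\curl_\gamma\psi)^e$ is single valued. Hence
\[
\langle(\curl_\gamma\psi)^e,\llbracket \bA^e\bm n_h\rrbracket\rangle_{\mathcal E_h}
=\sum_{K\in\mathcal T_h}\int_{\partial K}(\curl_\gamma\psi)^e\cdot\bA^e\bm n_K\dif s
=\sum_{K\in\mathcal T_h}\int_{\partial K}\bm\chi^e\cdot\bm n_K\dif s,
\]
where $\bm\chi:=\bA\,\curl_\gamma\psi$. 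In the last step we use the symmetry of $\bA$ and the fact that $(\bA\curl_\gamma\psi)^e=\bA^e(\curl_\gamma\psi)^e$ for closest-point extensions. The surface is closed, so every edge is interior and there are no boundary terms.

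Next I check the hypotheses of Lemma~\ref{lem:weak-conormal-flux}. First, $\bm\chi$ is tangential: $\bA$ is a tangential tensor, so $\bnu^\top\bA=0$ and $\bnu\cdot\bA\curl_\gamma\psi=0$. Second, $\bm\chi\in\bm W_1^1(\gamma)$ with
\[
\|\bm\chi\|_{W_1^1(\gamma)}\lesssim \|\bA\|_{L^2(\gamma)}\|\curl_\gamma\psi\|_{L^2(\gamma)}+\|\nabla_\gamma\bA\|_{L^2(\gamma)}\|\curl_\gamma\psi\|_{L^2(\gamma)}+\|\bA\|_{L^2(\gamma)}\|\nabla_\gamma\curl_\gamma\psi\|_{L^2(\gamma)}.
\]
This follows from the product rule and Cauchy--Schwarz in $L^1$. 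Here $\nabla_\gamma\curl_\gamma\psi$ involves $\nabla_\gamma^2\psi$ and $\bH\nabla_\gamma\psi$, both bounded by $\|\psi\|_{H^2(\gamma)}$ because $\gamma\in C^3$. The result is $\|\bm\chi\|_{W_1^1(\gamma)}\lesssim\|\psi\|_{H^2(\gamma)}\|\bA\|_{H^1(\gamma)}$, and Lemma~\ref{lem:weak-conormal-flux} then yields \eqref{eq:geo-trace-2}.

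The main point needing care is the first identity, namely the bookkeeping of the edge pairing under the convention \eqref{eq:tensor-conormal-jump-average}. The conormals $\bm n_1,\bm n_2$ are not opposite on $\Gamma_h$, so the decomposition \eqref{eq:edge-flux-decomposition} must be applied with $\llbracket(\curl_\gamma\psi)^e\rrbracket=0$. Then only the term $\llbracket\bA^e\bm n_h\rrbracket\cdot\{(\curl_\gamma\psi)^e\}$ survives, and this is exactly the sum of the two one-sided fluxes $\bm\chi^e\cdot\bm n_i$. There is no need to estimate the jump of $\bm n_h$ separately through Lemma~\ref{lm:Pnjump}, because the cancellation is already built into Lemma~\ref{lem:weak-conormal-flux}.
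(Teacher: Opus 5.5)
Your proposal is correct and follows the paper's proof essentially step for step. You set $\bm\chi=\bA\,\curl_\gamma\psi$, use the single-valuedness of $\bA^e$ and $(\curl_\gamma\psi)^e$ on edges (together with the symmetry of $\bA$) to identify the skeleton pairing with $\sum_{K}\int_{\partial K}\bm\chi^e\cdot\bm n_K\,\dif s$, check tangentiality and the bound $\|\bm\chi\|_{W_1^1(\gamma)}\lesssim\|\psi\|_{H^2(\gamma)}\|\bA\|_{H^1(\gamma)}$, and then invoke Lemma~\ref{lem:weak-conormal-flux}.
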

\begin{proof}
Set \(\bm\chi =\bA \curl_\gamma\psi\), a tangential field satisfying 
\(\|\bm\chi\|_{W_1^1(\gamma)}\lesssim \|\psi\|_{H^2(\gamma)} \|\bA\|_{H^1(\gamma)}\ \).
Since \(\bA^e\) is single-valued along each interior edge,
\(\bm\chi^e\cdot\bm n_K=(\curl_\gamma\psi)^e\cdot\bA^e\bm n_K\), whence
\[
\sum_{K\in\mathcal T_h}\int_{\partial K}\bm\chi^e\cdot\bm n_K\,\dif s
= \langle(\curl_\gamma\psi)^e, \llbracket \bA^e\bm n_h\rrbracket\rangle_{\mathcal E_h}.
\]
The claim now follows from Lemma~\ref{lem:weak-conormal-flux} (weak
conormal-flux estimate).
\end{proof}

\section{Finite element method} \label{sec:FEM}
This section introduces the Morley finite element on the discrete surface $\Gamma_h$ and the corresponding stabilized discrete formulation. Both the finite element space and the discrete bilinear form are intrinsic to $\Gamma_h$; the surface geometry enters only through the error analysis of Section~\ref{sec:analysis}, which relies essentially on the geometric consistency estimates of Section~\ref{sec:geo-error} to control the normal- and conormal-dependent errors produced by the discretization.

\subsection{Morley element on surfaces}
All degrees of freedom of the Morley element---the vertex values and the edge conormal-derivative moments---are scalar-valued. Assembling them across elements therefore requires neither identifying the (generally distinct) tangent planes of adjacent triangles nor any global continuity of the finite element function. This allows the Morley finite element space to be defined naturally on the piecewise linear surface $\Gamma_h$ as
\begin{equation*}
\begin{aligned}
V_h := \big\{ w\in L^2(\Gamma_h) \,\big|\, w|_K \in \mathcal{P}_2(K),\  
 w(a) \text{ is single-valued}\quad &\forall\, a \in \mathcal{V}_h, \\
 \int_e [\nabla_{\Gamma_h} w \cdot \bm{n}_h]\, \mathrm{d}s_h = 0 
\quad  &\forall\, e \in \mathcal{E}_h \big\}.
\end{aligned}
\end{equation*}
Let $\mathring{V}_h$ denote the subspace of $V_h$ consisting of functions with zero mean over $\Gamma_h$.

As in the planar case, functions in $V_h$ satisfy the following weak continuity conditions:
\begin{equation}\label{eq:weak-continuity}
\int_e [\nabla_{\Gamma_h} w \cdot \bm{t}_h]\, \mathrm{d}s_h = 0,
\quad
\int_e [\nabla_{\Gamma_h} w \cdot \bm{n}_h]\, \mathrm{d}s_h = 0
\quad \forall w \in V_h,\ \forall e \in \mathcal{E}_h .
\end{equation}
It should be noted that the co-normal vector $\bm{n}_h$ is defined elementwise and generally differs on the two elements adjacent to an edge $e$. Consequently, the vector-valued jump $\int_e \llbracket \nabla_{\Gamma_h} w \rrbracket \, \mathrm{d}s_h$ does not vanish in general.

\paragraph{$H^1$-conforming relative}
Define the operator
\(
\Pi_h^c : V_h \to C^0(\Gamma_h)
\)
such that $\Pi_h^c w$ is the continuous piecewise quadratic Lagrange finite element function whose degrees of freedom are prescribed by
\begin{equation}\label{eq:Pi_h^c}
(\Pi_h^c w)(a) = w(a) \quad \forall a \in \mathcal{V}_h,
\qquad
(\Pi_h^c w)(a_e) = \{w\}(a_e) \quad \forall a_e \in \mathcal{M}_h,
\end{equation}
where $\mathcal{M}_h$ denotes the set of edge midpoints of $\mathcal{T}_h$.
By a standard scaling argument and norm equivalence on the reference element, one obtains
\begin{equation}\label{eq:sNZT-relative-1}
\| w - \Pi_h^c w \|_{H^m(K)}
\lesssim
h^{1/2-m} \| [w] \|_{L^2(\partial K)}
\qquad
\forall w \in V_h,\ \forall K \in \mathcal{T}_h,\ 0 \le m \le 2.
\end{equation}

\subsection{The surface Piola transform}
The surface Piola transform is introduced to define the finite element interpolation in the next subsection. Because the Morley degrees of freedom include edge conormal-derivative moments, interpolating a function on $\gamma$ requires transferring its surface gradient to $\Gamma_h$ through a map that preserves conormal components~(cf.~\cite{demlow2024tangential,demlow2025taylor}).

Let $\Phi: \mathscr{S}_0 \to \mathscr{S}_1$ be a diffeomorphism between surfaces. The Piola transform $\mathscr{P}_{\Phi}$ maps a tangential vector field $\bm{g}$ on $\mathscr{S}_0$ to $\mathscr{S}_1$ via
$$
(\mathscr{P}_{\Phi}\bm{g}) \circ \Phi := |D\Phi|^{-1} (D\Phi) \bm{g},
$$
where $\mu := |D\Phi|$ satisfies $\mu \mathrm{d}\sigma_0 = \mathrm{d}\sigma_1$ for the respective surface measures. The inverse transform for $\bm{g}: \mathscr{S}_1 \to \mathbb{R}^3$ is given by 
$$
(\mathscr{P}_{\Phi^{-1}} \bm{g}) \circ \Phi^{-1} := (\mu \circ \Phi^{-1}) (D\Phi^{-1}) \bm{g}.
$$
This transformation preserves the divergence:
\begin{equation*}
{\rm div}_{\mathscr{S}_0} \bm{g} = \mu {\rm div}_{\mathscr{S}_1} \mathscr{P}_{\Phi} \bm{g} \qquad \forall \bm{g} \in \bm{H}({\rm div}_{\mathscr{S}_0}; \mathscr{S}_0).
\end{equation*}
Applying to the surface bijection $\bm{p}: \Gamma_h \to \gamma$ (where $\mu = \mu_h$), the Piola pullback of $\bm{g}: \gamma \to \mathbb{R}^3$ to $\Gamma_h$ is explicitly formulated as:
\begin{equation}\label{def:Piola_invp}
\breve{\bm{g}} := \mathscr{P}_{\bm{p}^{-1}}\bm{g} = \mu_h\Big[\textbf{I}-\frac{\bm{\nu}\otimes \bm{\nu}_h}{\bm{\nu}\cdot \bm{\nu}_h} \Big][\textbf{I}-d\textbf{H}]^{-1}\bm{g}^e.
\end{equation}

We first state the norm equivalence for this transform (cf.~\cite[Lemma~4.1]{bonito2020divergence}, \cite[Lemma~2.2]{wu2025stabilized}).
\begin{lemma}[norm equivalence of the Piola transform] \label{lm:Piola-equivalence}
For any $K \in \mathcal{T}_h$, let $\bm{g} \in \bm{H}^m(K^\gamma)$ with $m \in \{0, 1, 2\}$ on a $C^{m+2}$ surface. Then, $\breve{\bm g} = \mathscr{P}_{\bm{p}^{-1}} \bm{g} \in \bm{H}^m(K)$ satisfies 
\begin{equation} \label{eq:Piola-equivalence}
\|\bm{g}\|_{H^m(K^\gamma)} \simeq \|\breve{\bm g}\|_{H^m(K)}.
\end{equation}
\end{lemma}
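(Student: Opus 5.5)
The plan is to reduce the estimate to a change of variables together with pointwise bounds on the coefficient matrix and its derivatives. On each $K$ write $\breve{\bm g} = \mathbf{B}_h\,\bm g^e$ with
\[
\mathbf{B}_h := \mu_h\Big[\mathbf{I}-\frac{\bnu\otimes\bnu_h}{\bnu\cdot\bnu_h}\Big][\mathbf{I}-d\mathbf{H}]^{-1},
\]
where $\bnu,\mathbf{H},d$ are evaluated at $x\in K$ (equivalently at $\bp(x)$ for $\bnu$). Three bounds on $\mathbf{B}_h$ are needed:
\begin{itemize}
\item $\|\mathbf{B}_h\|_{W^m_\infty(K)}\lesssim 1$ for $m\le 2$;
\item $\mathbf{B}_h$ is uniformly invertible as a map from $T_{\bp(x)}\gamma$ onto the tangent plane of $K$;
\item its inverse $\mathbf{C}_h$, which realises $\bm g^e = \mathbf{C}_h\breve{\bm g}$ and is the forward Piola map $\mu_h^{-1}(\nabla\bp)$ restricted to tangent vectors of $K$, also satisfies $\|\mathbf{C}_h\|_{W^m_\infty(K)}\lesssim 1$.
\end{itemize}

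For the first bound, note that $\bnu_h$ is constant on $K$. Hence derivatives along $K$ fall only on $\mu_h$, $\bnu^e$, $d$ and $\mathbf{H}^e$. Since $\bnu^e=\nabla d$ and $\mathbf{H}^e=\nabla^2 d$, and $d\in C^{m+2}$ when $\gamma\in C^{m+2}$, the $m$-th derivatives involve at most $m+2$ derivatives of $d$. The only delicate ingredient is $\mu_h$. On a flat triangle it has the classical representation $\mu_h = \bnu\cdot\bnu_h\,\det(\mathbf{I}-d\mathbf{H})$, up to a sign convention. This is smooth with $m$ derivatives controlled by $\|d\|_{C^{m+2}}$. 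Moreover $\bnu\cdot\bnu_h\ge 1-Ch^2$ is bounded away from zero, so the quotient $1/(\bnu\cdot\bnu_h)$ and the factor $[\mathbf{I}-d\mathbf{H}]^{-1}$ are smooth with bounded derivatives once $h$ is small. The inverse $\mathbf{C}_h$ is handled identically, using $\nabla\bp=\bP-d\mathbf{H}$ and $\mu_h^{-1}$.

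With these bounds, apply the Leibniz rule to $\breve{\bm g}=\mathbf{B}_h\bm g^e$ on $K$. This gives $\|\breve{\bm g}\|_{H^m(K)}\lesssim \sum_{j\le m}\|\bm g^e\|_{H^j(K)}\lesssim\|\bm g^e\|_{H^m(K)}$. Then use the scalar norm equivalence \eqref{eq:scalar-norm-equiv} componentwise; it holds for $m\le 2$ since $\gamma\in C^{m+2}\subset C^{m+1}$. This yields $\|\breve{\bm g}\|_{H^m(K)}\lesssim\|\bm g\|_{H^m(K^\gamma)}$. The reverse inequality follows the same way from $\bm g^e=\mathbf{C}_h\breve{\bm g}$ and $\|\bm g\|_{H^m(K^\gamma)}\lesssim\|\bm g^e\|_{H^m(K)}$.

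The main obstacle I expect is verifying that the derivatives of $\mathbf{B}_h$ along $K$ are $h$-uniformly bounded rather than blowing up. In particular one must check that differentiating $\bnu^e$ and $d$ along the flat element introduces no negative powers of $h$. This holds because these are restrictions of fixed smooth ambient functions, namely derivatives of $d$, to $K$, while $\bnu_h$ is elementwise constant. The regularity count for $m=2$ needs $\nabla^2\mathbf{H}$, i.e. $d\in C^4$, which matches the $C^{m+2}$ hypothesis. I would first try to avoid the explicit formula for $\mu_h$ by writing $\mathbf{B}_h$ via $\mathbf{C}_h^{-1}$ on tangent planes. If that gets messy, I would fall back to the determinant formula above.
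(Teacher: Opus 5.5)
Your proposal is correct. For tangential $\bm g$ you write $\breve{\bm g}=\mathbf{B}_h\bm g^e$ and $\bm g^e=\mu_h^{-1}(\bP-d\bH)\breve{\bm g}$ on $K$; both coefficient fields are bounded in $W^m_\infty(K)$ uniformly in $h$, because $\bnu_h$ is constant on $K$, $d\in C^{m+2}$, and $\mu_h=(\bnu\cdot\bnu_h)\det(\mathbf{I}-d\bH)$ is bounded below, so the Leibniz rule and the scalar equivalence \eqref{eq:scalar-norm-equiv} give both inequalities. The paper gives no proof of its own and only cites \cite[Lemma~4.1]{bonito2020divergence} and \cite[Lemma~2.2]{wu2025stabilized}; your coefficient-bound argument is, as far as I can tell, the standard chain-rule argument behind such results.
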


The following lemma (cf.~\cite[Lemma~2.3]{wu2025stabilized}) and its corollary provide pointwise and elementwise approximation estimates for the Piola transform.
\begin{lemma}[pointwise Piola estimate for the gradient]\label{lm:Piola-derivative}
For any $w \in C^1(\gamma)$, it holds on $\Gamma_h$ that
\begin{equation}\label{eq:Piola-derivative}
\big| \widebreve{\nabla_\gamma w} - \nabla_{\Gamma_h} w^e \big| \lesssim h^2 \big| (\nabla_\gamma w)^e \big|.
\end{equation}
\end{lemma}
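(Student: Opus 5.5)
The plan is to write both vectors explicitly through the Piola formula \eqref{def:Piola_invp} and the chain rule \eqref{eq:nabla-trans}, and then compare them pointwise on each $K$. With $\bm g=\nabla_\gamma w$ (tangential), we have
\[
\widebreve{\nabla_\gamma w}=\mu_h\Big[\mathbf I-\frac{\bnu\otimes\bnu_h}{\bnu\cdot\bnu_h}\Big][\mathbf I-d\bH]^{-1}(\nabla_\gamma w)^e ,
\qquad
\nabla_{\Gamma_h}w^e=\bP_h(\bP-d\bH)(\nabla_\gamma w)^e .
\]
Let $\bm g^e:=(\nabla_\gamma w)^e$. Because $\bm g^e$ is tangential, $\bP\bm g^e=\bm g^e$. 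Since $\bH\bnu=0$, the maps $\bP-d\bH$ and $(\mathbf I-d\bH)^{-1}$ act on tangential vectors as $\mathbf I+\mathcal O(d)$. Using $|d|\lesssim h^2$ and $|1-\mu_h|\lesssim h^2$, both expressions therefore equal their $d=0$, $\mu_h=1$ versions up to $\mathcal O(h^2)|\bm g^e|$. The problem thus reduces to comparing
\[
\Big[\mathbf I-\frac{\bnu\otimes\bnu_h}{\bnu\cdot\bnu_h}\Big]\bm g^e
\quad\text{with}\quad \bP_h\bm g^e .
\]

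The key step is an exact algebraic identity. Write $\bm a:=\bm g^e-\frac{\bnu_h\cdot\bm g^e}{\bnu\cdot\bnu_h}\bnu$. This vector satisfies $\bm a\cdot\bnu_h=0$, so $\bm a$ lies in the tangent plane of $K$. It also satisfies $\bP_h\bm a=\bP_h\bm g^e-\frac{\bnu_h\cdot\bm g^e}{\bnu\cdot\bnu_h}\bP_h\bnu$, and since $\bP_h\bm a=\bm a$,
\[
\bm a-\bP_h\bm g^e=-\frac{\bnu_h\cdot\bm g^e}{\bnu\cdot\bnu_h}\,\bP_h\bnu .
\]
Each factor is small. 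First, $|\bnu_h\cdot\bm g^e|=|(\bnu_h-\bnu)\cdot\bm g^e|\lesssim h|\bm g^e|$ by tangentiality. Second, $|\bP_h\bnu|=|\bnu-(\bnu\cdot\bnu_h)\bnu_h|\lesssim h$. Third, $\bnu\cdot\bnu_h\ge 1/2$ for $h$ small. The product is therefore bounded by $h^2|\bm g^e|$. This product of two first-order defects is the only non-routine step: a naive estimate of $\bnu-\bnu_h$ alone would give only $\mathcal O(h)$.

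It remains to collect the remaining $\mathcal O(d)$ and $\mathcal O(1-\mu_h)$ perturbations:
\[
\bigl|\mu_h[\cdots](\mathbf I-d\bH)^{-1}\bm g^e-[\cdots]\bm g^e\bigr|\lesssim h^2|\bm g^e| .
\]
This holds because the bracket is uniformly bounded, which again uses $\bnu\cdot\bnu_h\ge 1/2$. Likewise,
\[
\bigl|\bP_h(\bP-d\bH)\bm g^e-\bP_h\bm g^e\bigr|\le |d|\,|\bH|\,|\bm g^e| .
\]
Combining these bounds with the identity above yields \eqref{eq:Piola-derivative}, provided $w\in C^1$ so that the estimates hold pointwise.
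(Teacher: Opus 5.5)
Your proof is correct. The paper does not prove this lemma itself; it defers to Lemma~2.3 of Wu and Zhou. Your reduction is the standard argument: you strip the $\mathcal O(d)$ and $\mathcal O(1-\mu_h)$ factors, then write the difference between the oblique projection $\bm g^e-\frac{\bnu_h\cdot\bm g^e}{\bnu\cdot\bnu_h}\bnu$ and $\bP_h\bm g^e$ exactly as $-\frac{\bnu_h\cdot\bm g^e}{\bnu\cdot\bnu_h}\bP_h\bnu$, a product of two $\mathcal O(h)$ factors. The paper uses the same mechanism in Appendix B for the curl analogue, where it expands the Piola pullback of a tangential field as $\bm b^e-\bnu(\bm e\cdot\bm b^e)+\mathcal O(h^2)|\bm b|$ with $\bm e=\bnu_h-\bnu$.
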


\begin{corollary}\label{cor:Piola-derivative}
For $w \in C^1(\gamma) \cap H_h^3(\gamma)$ on a $C^{4}$ surface, there holds
\begin{equation}\label{eq:Piola-derivative-1}
\| \widebreve{\nabla_\gamma w} - \nabla_{\Gamma_h} w^e \|_{H^m(K)} \lesssim h^{2-m} \| w \|_{H^3(K^\gamma)} \quad \forall K \in \mathcal{T}_h,\ 0 \le m \le 2.
\end{equation}
\end{corollary}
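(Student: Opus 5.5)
We argue separately for $m=0$, $m=1$, $m=2$. The case $m=0$ follows directly from the pointwise Lemma~\ref{lm:Piola-derivative} and the norm equivalence. For $m\ge1$ we differentiate the explicit representation of the error, and the key is to keep track of which factors carry the small parameter.

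Set $\bm g:=\nabla_\gamma w$. By \eqref{def:Piola_invp},
\[
\widebreve{\nabla_\gamma w}=\mu_h\Big[\mathbf I-\frac{\bnu\otimes\bnu_h}{\bnu\cdot\bnu_h}\Big][\mathbf I-d\bH]^{-1}\bm g^e .
\]
By \eqref{eq:nabla-trans},
\[
\nabla_{\Gamma_h}w^e=\bP_h(\bP-d\bH)\bm g^e .
\]
On each $K$ we write the difference as $\mathbf{B}_h\,\bm g^e$, where $\mathbf B_h$ is a matrix field built from $\mu_h$, $\bnu^e$, $\bnu_h$, $d$ and $\bH^e$. Since $\bm g^e$ is tangential, i.e. $\bm g^e=\bP\bm g^e$, we may replace $\mathbf B_h$ by $\mathbf B_h\bP$. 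The proof of Lemma~\ref{lm:Piola-derivative} amounts to the pointwise bound $|\mathbf B_h\bP|\lesssim h^2$. It comes from three facts: $|\mu_h-1|\lesssim h^2$, $|d|\lesssim h^2$, and the cancellation
\[
\bP_h\bP-\Big[\mathbf I-\frac{\bnu\otimes\bnu_h}{\bnu\cdot\bnu_h}\Big]\bP=\mathcal O(h^2)\quad\text{on tangential vectors.}
\]
One verifies the last fact by a direct expansion in $\bnu_h-\bnu$: the first-order terms cancel because $\bnu_h\cdot\bm g^e=(\bnu_h-\bnu)\cdot\bm g^e$.

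For $m=1,2$ we use the Leibniz rule on $K$, with the key observation that $\bnu_h$ is constant on $K$. Hence tangential derivatives of $\mathbf B_h\bP$ fall only on the smooth fields $\bnu^e$, $\bP^e$, $\bH^e$, $d$ and $\mu_h$. We then need element-wise derivative bounds:
\[
|\nabla_{\Gamma_h}^k(\mathbf B_h\bP)|\lesssim h^{2-k},\qquad k=1,2 .
\]
These follow from $|\nabla^k d|\lesssim h^{2-k}$ on $\Gamma_h$ (standard, since $d$ vanishes at the vertices and $\Gamma_h$ interpolates $\gamma$), from $|\nabla^k_{\Gamma_h}\mu_h|\lesssim h^{2-k}$ (cf.~\cite{demlow2009higher}), and from $\bnu^e-\bnu_h=\mathcal O(h)$ with $\nabla\bnu^e=\bH^e=\mathcal O(1)$.

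The $C^4$ regularity of $\gamma$ is needed here: two derivatives of $\mu_h$ and $\bH$ require $\gamma\in C^4$, and derivatives of $\bm g^e$ up to order $2$ are controlled by $\|w\|_{H^3(K^\gamma)}$ via \eqref{eq:scalar-norm-equiv}. The Leibniz rule then gives
\[
\|\nabla^m_{\Gamma_h}(\mathbf B_h\bP\bm g^e)\|_{L^2(K)}\lesssim\sum_{k\le m}h^{2-k}\|\bm g^e\|_{H^{m-k}(K)}\lesssim h^{2-m}\|w\|_{H^{m+1}(K^\gamma)} .
\]
Summing the $H^m$ norm components and using $h\lesssim1$ yields \eqref{eq:Piola-derivative-1}.

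The main obstacle is the bound $|\nabla^k_{\Gamma_h}(\mathbf B_h\bP)|\lesssim h^{2-k}$. We cannot merely differentiate an $\mathcal O(h^2)$ quantity; we need its explicit structure. The plan is to expand $\mathbf B_h\bP$ into a finite sum of products of the following kinds of factors:
\begin{enumerate}
\item $\mathcal O(h^2)$ smooth scalars, namely $d$ and $\mu_h-1$;
\item the quadratic term $(\bnu_h-\bnu^e)\otimes(\bnu_h-\bnu^e)$, or products like $(\bnu_h\cdot\bnu^e-1)$;
\item terms linear in $(\bnu_h-\bnu^e)$ that cancel exactly, on tangential vectors, against the corresponding terms of $\bP_h\bP$.
\end{enumerate}
Each factor $\bnu_h-\bnu^e$ is $\mathcal O(h)$ with $\mathcal O(1)$ derivative. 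Each derivative therefore costs at most one power of $h$, which yields the scaling $h^{2-k}$.
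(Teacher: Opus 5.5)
Your argument is correct, but it takes a different route from the paper. The paper only cites the argument (\cite[Lemma~3.4]{wu2025stabilized}). Its listed ingredients indicate a norm-level proof that never differentiates the geometric error. One takes an $L^2$-stable local projection $\Pi_K$ onto $[\mathcal P_1(K)]^3$ and splits $\widebreve{\nabla_\gamma w}-\nabla_{\Gamma_h}w^e$ into $(\mathbf I-\Pi_K)\widebreve{\nabla_\gamma w}-(\mathbf I-\Pi_K)\nabla_{\Gamma_h}w^e$ plus the polynomial $\Pi_K\big(\widebreve{\nabla_\gamma w}-\nabla_{\Gamma_h}w^e\big)$. The first part is bounded by $h^{2-m}\big(|\widebreve{\nabla_\gamma w}|_{H^2(K)}+|\nabla_{\Gamma_h}w^e|_{H^2(K)}\big)\lesssim h^{2-m}\|w\|_{H^3(K^\gamma)}$, using Lemma~\ref{lm:Piola-equivalence} with $m=2$ (this is where $C^4$ enters) and \eqref{eq:scalar-norm-equiv}. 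For the polynomial part, an inverse estimate reduces it to the $L^2$ bound of Lemma~\ref{lm:Piola-derivative}. That route needs only the zeroth-order pointwise estimate and $\mathcal O(1)$ derivative bounds on the Piola map, which are hidden in the norm equivalence. The price is that the full $H^3$ norm appears for every $m$.

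Your route proves the stronger pointwise statement $|\nabla^k_{\Gamma_h}(\mathbf B_h\bP)|\lesssim h^{2-k}$. This requires explicit derivative bounds for $d$ and $\mu_h$ together with the structural factorization you plan. That factorization is indeed available: the only piece of $\mathbf B_h\bP$ without a factor $d$ or $\mu_h-1$ is exactly
\[
\Big[\mathbf I-\frac{\bnu\otimes\bnu_h}{\bnu\cdot\bnu_h}\Big]\bP-\bP_h\bP=\Big(\bnu_h-\frac{\bnu}{\bnu\cdot\bnu_h}\Big)\otimes\bP(\bnu_h-\bnu),
\]
a product of two $\mathcal O(h)$ factors, each with $\mathcal O(1)$ derivatives on $K$. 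In return you get the slightly sharper right-hand side $h^{2-m}\|w\|_{H^{m+1}(K^\gamma)}$.

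One correction: the bound $|\nabla^k d|\lesssim h^{2-k}$ must be read for in-plane derivatives $\nabla^k_{\Gamma_h}d$, since the ambient gradient $\nabla d=\bnu^e$ has unit length. It holds because $\nabla_{\Gamma_h}d=\bP_h(\bnu^e-\bnu_h)$ and $\nabla^2_{\Gamma_h}d=\bP_h\bH\bP_h$ on each flat $K$. It does not require the vertices to lie on $\gamma$.
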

The elementwise estimate \eqref{eq:Piola-derivative-1} follows from Lemma~\ref{lm:Piola-derivative}, the norm equivalence \eqref{eq:Piola-equivalence}, and standard interpolation estimates; the argument is contained in the proof of \cite[Lemma~3.4]{wu2025stabilized} and is therefore omitted.
An analogous estimate holds for the surface curl, which is used to
compare the reconstructed velocity with its discrete counterpart in
Section~\ref{sec:analysis}, whose proof is deferred to Appendix~\ref{sec:appendix_piola_curl}.
\begin{corollary}[pointwise Piola estimate for the curl]\label{cor:Piola-curl}
For any $w \in C^1(\gamma)$, it holds on $\Gamma_h$ that
\begin{equation}\label{eq:Piola-curl}
\big| \widebreve{\curl_\gamma w} - \curl_{\Gamma_h} w^e \big|
\lesssim h^2 \big| (\nabla_\gamma w)^e \big|.
\end{equation}
\end{corollary}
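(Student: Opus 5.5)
The idea is to compare both sides with the same elementary expression built from $\bm a:=(\nabla_\gamma w)^e$, which is tangential to $\gamma$ at $\bp(x)$. All discrepancies should then be either explicitly $\mathcal O(h^2)$ geometric factors or a first-order term that cancels exactly through a cross-product identity. This is the same mechanism as in Lemma~\ref{lm:Piola-derivative}, made explicit for the rotated field.

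\emph{Step 1 (discrete curl).} By \eqref{eq:nabla-trans}, $\nabla_{\Gamma_h}w^e=\bP_h(\bP-d\bH)\bm a$. Taking $\bnu_h\times\cdot$ removes the $\bnu_h$-component, so
\[
\curl_{\Gamma_h}w^e=\bnu_h\times(\mathbf I-d\bH)\bm a=\bnu_h\times\bm a+\mathcal O(h^2|\bm a|),
\]
using $|d|\lesssim h^2$.

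\emph{Step 2 (Piola side).} By \eqref{def:Piola_invp},
\[
\widebreve{\curl_\gamma w}=\mu_h\,Q\,(\mathbf I-d\bH)^{-1}(\bnu\times\bm a),
\qquad
Q:=\mathbf I-\frac{\bnu\otimes\bnu_h}{\bnu\cdot\bnu_h}.
\]
Since $|\mu_h-1|\lesssim h^2$, $|(\mathbf I-d\bH)^{-1}-\mathbf I|\lesssim h^2$ and $|Q|\lesssim1$, this equals $Q(\bnu\times\bm a)+\mathcal O(h^2|\bm a|)$. It therefore suffices to prove
\[
\bigl|Q(\bnu\times\bm a)-\bnu_h\times\bm a\bigr|\lesssim h^2|\bm a|
\quad\text{for every tangential }\bm a .
\]

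\emph{Step 3 (the cancellation, the key step).} Write $\bm\varepsilon:=\bnu_h-\bnu$, with $|\bm\varepsilon|\lesssim h$, and split $\bm\varepsilon=\bm\varepsilon_t+(\bm\varepsilon\cdot\bnu)\bnu$. Because $\bm\varepsilon\cdot\bnu=-\tfrac12|\bm\varepsilon|^2=\mathcal O(h^2)$, replacing $\bm\varepsilon$ by $\bm\varepsilon_t$ anywhere costs only $\mathcal O(h^2|\bm a|)$.

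\begin{itemize}
\item Piola side: $\bnu_h\cdot(\bnu\times\bm a)=\bm\varepsilon_t\cdot(\bnu\times\bm a)=\mathcal O(h|\bm a|)$ and $(\bnu\cdot\bnu_h)^{-1}=1+\mathcal O(h^2)$. Hence
\[
Q(\bnu\times\bm a)=\bnu\times\bm a-\bigl(\bm\varepsilon_t\cdot(\bnu\times\bm a)\bigr)\bnu+\mathcal O(h^2|\bm a|).
\]
\item Discrete side: $\bnu_h\times\bm a=\bnu\times\bm a+\bm\varepsilon_t\times\bm a+\mathcal O(h^2|\bm a|)$. Since $\bm\varepsilon_t$ and $\bm a$ are both tangential, $\bm\varepsilon_t\times\bm a$ is normal, namely
\[
\bm\varepsilon_t\times\bm a=\bigl(\bnu\cdot(\bm\varepsilon_t\times\bm a)\bigr)\bnu=-\bigl(\bm\varepsilon_t\cdot(\bnu\times\bm a)\bigr)\bnu .
\]
\end{itemize}

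The two first-order terms therefore coincide, and the difference is $\mathcal O(h^2|\bm a|)$. Combining this with Steps 1 and 2 gives \eqref{eq:Piola-curl}.

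I expect the only delicate point to be this exact cancellation of the $\mathcal O(h)$ terms in Step 3, together with keeping track of which vectors are tangential at which point. Both $\bnu$ and $\bm a$ are evaluated at $\bp(x)$, so that $\bnu\cdot\bm a=0$ exactly. Everything else is routine bookkeeping of $\mathcal O(h^2)$ factors.
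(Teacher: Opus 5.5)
Your proof is correct and follows essentially the paper's argument. Both sides are expanded to $\bnu\times\bm a$ plus a first-order correction, and the corrections $-(\bm e\cdot(\bnu\times\bm a))\bnu$ and $\bm e\times\bm a$ differ by $-\bP(\bm e\times\bm a)=\mathcal O(h^2|\bm a|)$, since $\bm e$ is tangential up to $\mathcal O(h^2)$. The only difference is that you compute $\curl_{\Gamma_h}w^e=\bnu_h\times(\mathbf I-d\bH)\bm a$ directly, whereas the paper passes through Lemma~\ref{lm:Piola-derivative} and $\bnu_h\times\widebreve{\nabla_\gamma w}$, so your version is slightly more self-contained.
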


\subsection{Interpolation and approximation}
Given $w \in C^1(\gamma) \cap H_h^3(\gamma)$, the interpolant $\tilde{I}_h w \in V_h$ is defined by prescribing the Morley degrees of freedom. 
At each vertex $a \in \mathcal{V}_h$, the nodal value is given by
\(
\tilde{I}_h w(a) = w^e(a) .
\)
For each edge $e \in \mathcal{E}_h$ and each element $K$ sharing $e$, the normal derivative degree of freedom is defined by
\begin{equation}\label{eq:tildeI}
\int_e \nabla_{\Gamma_h} \tilde{I}_h w|_{K} \cdot \bm{n}_K \, \mathrm{d}s_h
=
\int_e \widebreve{\nabla_\gamma w}|_{K} \cdot \bm{n}_K \, \mathrm{d}s_h .
\end{equation}
Here, $\bm{n}_K$ denotes the conormal vector of $K$ along $e$.
The normal-preserving property of the surface Piola transform ensures that the degrees of freedom are well-defined, and $\tilde{I}_h w$ is uniquely determined as an element of $V_h$.

The integral-free subspace projection $I_h w \in \mathring{V}_{h}$ is then defined as
\begin{equation*}
I_h w := \tilde{I}_h w - \frac{1}{|\Gamma_h|}\int_{\Gamma_h} \tilde{I}_h w \, \mathrm{d}\sigma_h.
\end{equation*}

\begin{lemma}[interpolation approximation] \label{lm:approximation}
For $w \in C^1(\gamma) \cap H_h^3(\gamma)$, it holds that 
\begin{subequations}
\begin{align} 
\| w^e - \tilde{I}_h w \|_{H^m(K)} &\lesssim h^{3-m} \|w\|_{H^3({K^\gamma})} 
\quad \forall K \in \mathcal{T}_h, ~0\leq m\leq 3, \label{eq:tildeI-L0} \\
\| \widebreve{\nabla_\gamma w} - \nabla_{\Gamma_h} I_h w \|_{H^m(K)} &\lesssim h^{2-m} \| w \|_{H^3({K^\gamma})} 
\quad \forall K \in \mathcal{T}_h, ~0\leq m\leq 2. \label{eq:I-D1}
\end{align}
Moreover, if $w \in C^1(\gamma) \cap H_h^3(\gamma) \cap \mathring{L}^2(\gamma)$, it holds that
\begin{equation} \label{eq:I-L0}
\|w^e - I_h w\|_{H_h^m(\Gamma_h)} \lesssim h^{\min\{3-m,2\}} \|w\|_{H_h^3(\gamma)} \quad 0 \leq m \leq 3.
\end{equation}
\end{subequations}
\end{lemma}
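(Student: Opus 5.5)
The plan is to compare $\tilde I_h w$ elementwise with a local quadratic interpolant of $w^e$, control their difference through the Morley degrees of freedom, and then handle the mean correction separately.

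\emph{Reference comparison.} On each $K$, let $\Pi_K w^e\in\mathcal P_2(K)$ be the planar Morley interpolant of $w^e|_K$. Its degrees of freedom are the vertex values $w^e(a)$ and the edge moments $\int_e \nabla_{\Gamma_h} w^e\cdot\bm n_K\,\mathrm{d}s_h$. Since $K$ is flat, the standard Bramble--Hilbert and scaling argument applies. Combined with the norm equivalence \eqref{eq:scalar-norm-equiv} on a $C^4$ surface, it gives
\[
\|w^e-\Pi_K w^e\|_{H^m(K)}\lesssim h^{3-m}|w^e|_{H^3(K)}\lesssim h^{3-m}\|w\|_{H^3(K^\gamma)},\qquad 0\le m\le 3 .
\]

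\emph{The difference $\tilde I_h w-\Pi_K w^e$.} This is a quadratic on $K$ whose vertex values vanish. Its edge-moment degrees of freedom equal $\int_e(\widebreve{\nabla_\gamma w}-\nabla_{\Gamma_h}w^e)\cdot\bm n_K\,\mathrm{d}s_h$. By Lemma~\ref{lm:Piola-derivative} these are bounded by $h^2\int_e|(\nabla_\gamma w)^e|\,\mathrm{d}s_h$. Norm equivalence on the reference element gives, for a quadratic $q$ with zero vertex values,
\[
\|q\|_{H^m(K)}\lesssim h^{1-m}\sum_{e\subset\partial K}\Big|\int_e\nabla q\cdot\bm n_K\Big|.
\]
A trace inequality then yields $\int_e|(\nabla_\gamma w)^e|\lesssim h^{1/2}\|\cdot\|_{L^2(e)}\lesssim \|\nabla_\gamma w\|_{L^2(K^\gamma)}+h\|\nabla_\gamma w\|_{H^1(K^\gamma)}$. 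Hence the difference is $O(h^{3-m})$ in $H^m(K)$, up to a factor of $\|w\|_{H^2(K^\gamma)}$ or $\|w\|_{H^3(K^\gamma)}$. For $m=3$ the difference vanishes in the $H^3$ seminorm, and its lower-order parts are handled in the same way. This proves \eqref{eq:tildeI-L0}.

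\emph{Gradient estimate \eqref{eq:I-D1}.} Because $\nabla_{\Gamma_h}I_h w=\nabla_{\Gamma_h}\tilde I_h w$, the triangle inequality splits the error as
\[
\|\widebreve{\nabla_\gamma w}-\nabla_{\Gamma_h}w^e\|_{H^m(K)}+\|\nabla_{\Gamma_h}(w^e-\tilde I_h w)\|_{H^m(K)} .
\]
Corollary~\ref{cor:Piola-derivative} bounds the first term, and \eqref{eq:tildeI-L0} with $m+1$ bounds the second.

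\emph{Global estimate \eqref{eq:I-L0}.} Write $w^e-I_h w=(w^e-\tilde I_h w)+c$, where
\[
c=|\Gamma_h|^{-1}\int_{\Gamma_h}\tilde I_h w\,\mathrm{d}\sigma_h .
\]
The constant $c$ only contributes for $m=0$, and there it requires $|c|\lesssim h^2\|w\|_{H^3}$. Using $\int_\gamma w=0$, we have
\[
\int_{\Gamma_h}\tilde I_h w=\int_{\Gamma_h}(\tilde I_h w-w^e)+\int_\gamma w(\mu_h^{-1}-1)\,\mathrm{d}\sigma .
\]
The first integral is $O(h^3)$ by \eqref{eq:tildeI-L0}, and the second is $O(h^2\|w\|_{L^2})$ since $|1-\mu_h|\lesssim h^2$. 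This is where the exponent $\min\{3-m,2\}$ comes from.

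The main obstacle is the second step. One has to check that the Piola-based edge moments deviate from the true conormal moments only at order $h^2$ pointwise, which is exactly what Lemma~\ref{lm:Piola-derivative} provides, and also that the scaling constants are uniform despite the curved lifting. The latter is handled by working only on the flat triangle $K$ with $w^e$.
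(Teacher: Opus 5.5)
Your proposal is correct and follows the paper's proof essentially step for step. The shared steps are: the local Morley interpolant of $w^e$; the vertex-free quadratic difference controlled through the Piola edge-moment defect (Lemma~\ref{lm:Piola-derivative}), a trace inequality and scaling; the triangle inequality with Corollary~\ref{cor:Piola-derivative} for \eqref{eq:I-D1}; and the mean correction via $\int_\gamma w=0$ and $|1-\mu_h|\lesssim h^2$.

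One small imprecision: $\|\cdot\|_{H_h^m(\Gamma_h)}$ is a full norm, so the constant $c$ enters the $L^2$ part for every $m$, not only $m=0$. Its $O(h^2)$ size lies within $h^{\min\{3-m,2\}}$ for all $m$, so the conclusion is unaffected.
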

\begin{proof}
Fix an element $K \in \mathcal{T}_h$.  
Let $z \in \mathcal{P}_2(K)$ be the elementwise Morley interpolant of $w^e$, defined by 
$z(a) = w^e(a)$ and $\int_e \nabla_{\Gamma_h} z \cdot \bm{n}_K \, \mathrm{d}s_h
=
\int_e \nabla_{\Gamma_h} w^e|_K \cdot \bm{n}_K \, \mathrm{d}s_h$ for all $a \in \mathcal{V}_K$ and $e \in \mathcal{E}_K$ .
By standard approximation theory for quadratic polynomials,
\begin{equation}\label{eq:we-z}
\| w^e - z \|_{H^m(K)} 
\lesssim h^{3-m} | w^e |_{H^3(K)}
\lesssim h^{3-m} \| w \|_{H^3(K^\gamma)}
\quad 0 \le m \le 3 .
\end{equation}

Using the Piola transformation property \eqref{eq:Piola-derivative} and the definition of $\tilde{I}_h$ in \eqref{eq:tildeI}, we obtain for any edge $e \in \mathcal{E}_K$
\[
\begin{aligned}
\left| \int_e \nabla_{\Gamma_h} ( z - \tilde{I}_h w|_K ) \cdot \bm{n}_K \, \mathrm{d}s_h \right|
&=
\left| \int_e \big( \nabla_{\Gamma_h} w^e - \widebreve{\nabla_\gamma w} \big)|_K \cdot \bm{n}_K \, \mathrm{d}s_h \right| \\
&\lesssim h^{5/2} \| \nabla_\gamma w \|_{L^2(e^\gamma)}
\lesssim h^{2} \| w \|_{H^2(K^\gamma)},
\end{aligned}
\]
where the last step follows from a trace inequality.
Since $z - \tilde{I}_h w|_K \in \mathcal{P}_2(K)$ and vanishes at all vertices of $K$, a standard scaling argument and the above inequality yield
\begin{equation}\label{eq:z-Ihw}
\| z - \tilde{I}_h w \|_{H^m(K)}
\lesssim
h^{1-m} \max_{e \in \mathcal{E}_K}
\left| \int_e \nabla_{\Gamma_h} ( z - \tilde{I}_h w|_K ) \cdot \bm{n}_K \, \mathrm{d}s_h \right|
\lesssim
h^{3-m} \| w \|_{H^2(K^\gamma)} .
\end{equation}
Combining \eqref{eq:we-z} and \eqref{eq:z-Ihw} gives \eqref{eq:tildeI-L0}.  
Estimate \eqref{eq:I-D1} then follows from \eqref{eq:Piola-derivative-1} and \eqref{eq:tildeI-L0}.

Finally, if $w \in \mathring{L}^2(\gamma)$, using $|1-\mu_h| \lesssim h^2$, we have
\[
\begin{aligned}
\| I_h w - \tilde{I}_h w \|_{L^2(\Gamma_h)}
&= \frac{1}{|\Gamma_h|^{1/2}} \left| \int_{\Gamma_h} \tilde{I}_h w \, \mathrm{d}\sigma_h \right| \\
&\lesssim
\left| \int_{\Gamma_h} ( \tilde{I}_h w - w^e ) \, \mathrm{d}\sigma_h \right|
+
\left| \int_{\Gamma_h} w^e \, \mathrm{d}\sigma_h - \int_\gamma w \, \mathrm{d}\sigma \right| \\
&\lesssim
\| \tilde{I}_h w - w^e \|_{L^2(\Gamma_h)}
+
\int_{\Gamma_h} | w^e (1-\mu_h) | \, \mathrm{d}\sigma_h \\
&\lesssim
h^3 \| w \|_{H_h^3(\gamma)} + h^2 \| w \|_{L^2(\gamma)} ,
\end{aligned}
\]
which proves \eqref{eq:I-L0}.
\end{proof}

\begin{corollary}[jump estimates of the interpolant]
For $w \in C^1(\gamma) \cap H_h^3(\gamma)$, the following estimates hold:
\begin{subequations}
\begin{align}
\| [I_h w ] \|_{L^2(e)}
&\lesssim h^{5/2} \|w\|_{H_h^3(\omega_{e^\gamma})}
\quad \forall e \in \mathcal{E}_h,  \label{eq:value-jump} \\
\| [\nabla_{\Gamma_h} I_h w \cdot \bm{n}_h] \|_{L^2(e)}
&\lesssim h^{3/2} \|w\|_{H_h^3(\omega_{e^\gamma})}
\quad \forall e \in \mathcal{E}_h, \label{eq:normal-jump} \\
\| [\nabla_{\Gamma_h} I_h w \cdot \bm{t}_h] \|_{L^2(e)}
&\lesssim h^{3/2} \|w\|_{H_h^3(\omega_{e^\gamma})}
\quad \forall e \in \mathcal{E}_h. \label{eq:t-jump}
\end{align}
\end{subequations}
\end{corollary}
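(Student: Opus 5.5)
The plan is to compare $I_h w$ on each side of an edge with a single continuous object, namely $w^e$ and its tangential gradient, and then convert the elementwise estimates of Lemma~\ref{lm:approximation} into edge estimates by a scaled trace inequality. Fix an interior edge $e=\partial K_1\cap\partial K_2$. Since the constant shift in $I_h w$ has no effect on jumps, we may work with $\tilde I_h w$. The function $w^e$ is continuous across $e$, and $\nabla_{\Gamma_h}$-quantities of $w^e$ are controlled through Lemma~\ref{lm:Piola-derivative} and Corollary~\ref{cor:Piola-derivative}.

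\emph{Value jump \eqref{eq:value-jump}.} Write $[\tilde I_h w]=(\tilde I_h w|_{K_1}-w^e)-(\tilde I_h w|_{K_2}-w^e)$ on $e$, using that $w^e$ is single-valued. The scaled trace inequality $\|v\|_{L^2(e)}^2\lesssim h^{-1}\|v\|_{L^2(K)}^2+h\|\nabla v\|_{L^2(K)}^2$, combined with \eqref{eq:tildeI-L0} for $m=0,1$, bounds each piece by $h^{-1/2}h^3+h^{1/2}h^2=h^{5/2}$ times $\|w\|_{H^3(K_i^\gamma)}$. Summing over the two elements of the patch gives \eqref{eq:value-jump}.

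\emph{Gradient jumps \eqref{eq:normal-jump} and \eqref{eq:t-jump}.} For $i=1,2$, set $\bm\eta_i:=\nabla_{\Gamma_h}I_h w|_{K_i}-\widebreve{\nabla_\gamma w}|_{K_i}$. By \eqref{eq:I-D1} with $m=0,1$ and the trace inequality, $\|\bm\eta_i\|_{L^2(e)}\lesssim h^{3/2}\|w\|_{H^3(K_i^\gamma)}$. It then remains to show that $[\widebreve{\nabla_\gamma w}\cdot\bm n_h]$ and $[\widebreve{\nabla_\gamma w}\cdot\bm t_h]$ are of size $h^{3/2}$ in $L^2(e)$. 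For the tangential component, Lemma~\ref{lm:Piola-derivative} lets us replace $\widebreve{\nabla_\gamma w}$ by $\nabla_{\Gamma_h}w^e$, at a cost of $h^2|(\nabla_\gamma w)^e|$, whose $L^2(e)$ norm is at most $h^{3/2}\|w\|_{H^2(\omega_{e^\gamma})}$ by the trace inequality. The term $\nabla_{\Gamma_h}w^e\cdot\bm t_i$ is the derivative of the continuous function $w^e$ along $e$, and since $\bm t_1=-\bm t_2$ its jump vanishes exactly.

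For the conormal component, the normal-preserving property of the Piola transform, the same property used to make \eqref{eq:tildeI} well defined, shows that the Piola-transformed flux $\widebreve{\nabla_\gamma w}\cdot\bm n_K$ on $e$ corresponds to $\nabla_\gamma w\cdot\bm n$ on $e^\gamma$ up to a common Jacobian factor. The exact conormals of $K_1^\gamma$ and $K_2^\gamma$ are opposite, so this jump vanishes. If that identity turns out to be delicate, a fallback is to write $[\nabla_{\Gamma_h}w^e\cdot\bm n_h]=\nabla_{\Gamma_h} w^e\cdot\bm n_1+\nabla_{\Gamma_h}w^e\cdot\bm n_2$. Each $\nabla_{\Gamma_h}w^e|_{K_i}$ equals $\bP_{h,i}(\bP-d\bH)(\nabla_\gamma w)^e$, and this differs from $(\nabla_\gamma w)^e$ by $O(h)$. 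Lemma~\ref{lm:Pnjump} then shows that $(\nabla_\gamma w)^e\cdot\llbracket\bn_h\rrbracket=O(h^2)|\nabla_\gamma w|$. The remaining cross terms are of the form $O(h)\cdot\bm n_i$ differences; these require the finer structure $\bn_i\cdot\bP_{h,i}=\bn_i$, which reduces them to $(\bP_{h,i}-\bP)$-type errors paired against conormals, and these should again be $O(h^2)$.

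\emph{Main obstacle.} The main obstacle is this conormal jump of the Piola field, since the discrete conormals are not opposite. I will first try the exact normal-preserving identity. The pointwise $O(h^2)$ bound then gives $h^{2}\cdot h^{-1/2}\|\nabla_\gamma w\|_{H^1}$, which is better than needed. Combining the $\bm\eta_i$ bounds with the vanishing, or $O(h^2)$, continuous jumps yields \eqref{eq:normal-jump} and \eqref{eq:t-jump}.
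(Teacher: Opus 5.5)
Your proof is correct. For \eqref{eq:value-jump} and \eqref{eq:normal-jump} it is the paper's argument: first $[I_hw]=[\tilde I_hw-w^e]$ with the trace inequality and \eqref{eq:tildeI-L0}, then $[\widebreve{\nabla_\gamma w}\cdot\bm n_h]|_e=0$ combined with \eqref{eq:I-D1} and the trace inequality. Your primary justification of that identity is the right one. Pointwise on $e$, $\breve{\bm g}|_{K_i}\cdot\bm n_i$ equals $\bm g\cdot\bm n$ times the line Jacobian of $\bm p|_e$, which is the same from both sides, and the exact conormals of $K_1^\gamma,K_2^\gamma$ are opposite.

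The one place you differ is \eqref{eq:t-jump}. The paper does not return to the Piola field there. Since $\bm t_2=-\bm t_1$, one has $[\nabla_{\Gamma_h}I_hw\cdot\bm t_h]=\partial_{\bm t_1}[I_hw]$ on $e$. An inverse inequality for the quadratic polynomial $[I_hw]|_e$ then turns \eqref{eq:value-jump} directly into \eqref{eq:t-jump}. Your route also works: you split off $\bm\eta_i$, invoke Lemma~\ref{lm:Piola-derivative}, and use that $\nabla w^e\cdot\bm t_1$ is single-valued. It treats both gradient jumps in one framework, but it is longer and uses more machinery than the one-line inverse-inequality argument.

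Two minor points. First, your fallback for the conormal jump has no cross terms to worry about. Since $\bm n_i\cdot\bP_{h,i}=\bm n_i$, one gets exactly $[\nabla_{\Gamma_h}w^e\cdot\bm n_h]=(\nabla_\gamma w)^e\cdot(\bP-d\bH)\llbracket\bn_h\rrbracket=\mathcal O(h^2)\,|(\nabla_\gamma w)^e|$ by Lemma~\ref{lm:Pnjump}. Second, $h^2\cdot h^{-1/2}=h^{3/2}$ is exactly the required rate, not better than needed; it only involves a weaker ($H^2$) norm.
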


\begin{proof}
Since $\tilde{I}_h w$ and $I_h w$ differ only by a constant and $w^e \in C^0(\Gamma_h)$, it holds that $[I_h w]|_e=[\tilde{I}_h w - w^e]|_e$.
Combining the trace inequality and \eqref{eq:tildeI-L0} yields \eqref{eq:value-jump}.  
The estimate \eqref{eq:t-jump} then follows from an inverse inequality on $e$.
Using
\(
[\widebreve{\nabla_\gamma w} \cdot \bm{n}_h]|_e = 0
\)
and \eqref{eq:I-D1}, another application of the trace inequality yields \eqref{eq:normal-jump}.
\end{proof}

\subsection{Stabilized nonconforming FEM}
In this subsection, a stabilized nonconforming finite element method is introduced for the surface problem \eqref{eq:s-streamStokes}.
Define the bilinear forms $a_h : H_h^2(\Gamma_h) \times H_h^2(\Gamma_h) \to \mathbb{R}$ and $s_h^0 :V_h \times V_h  \to \mathbb{R}$ by
\begin{equation*}
\begin{aligned}
a_h(w,\psi)&:=(\HG w, \HG \psi)_{\Gamma_h} +(\nabla_{\Gamma_h} w, \nabla_{\Gamma_h} \psi)_{\Gamma_h}, \\
s_h^0(w,\psi)&:= \sum_{e \in \mathcal{E}_h} h^{-3}_e\langle [w ], [\psi ]\rangle_{e}.
\end{aligned}
\end{equation*}
The stabilization term involves no user-defined parameter.
The stabilized nonconforming finite element method reads: Find $\phi_h \in \mathring{V}_h$ such that
\begin{equation}\label{eq:FEM}
    a_h(\phi_h,\psi)+s_h^0(\phi_h,\psi)=l_h(\psi):=(\bm{f}_h, \curl_{\Gamma_h} \psi)_{\Gamma_h} \quad \forall \psi\in \mathring{V}_{h},
\end{equation}
where $\bm{f}_h$ is an approximation of $\bm{f}$ on $\Gamma_h$. For instance, we can take the extension $\bm{f}_h = \bm{f}^e$.

Define the discrete energy seminorm on $V_h$ by
\begin{equation*}
\triplenorm{w}^2 := \| \HG w \|_{L^2(\Gamma_h)}^2 + \| \nabla_{\Gamma_h} w \|_{L^2(\Gamma_h)}^2 + \sum_{e\in\mathcal E_h}h_e^{-3}\|[w]\|_{L^2(e)}^2.
\end{equation*}
On the full space $V_h$ this is only a seminorm. We show below that it controls the broken $H^2$ norm on the subspace $\mathring{V}_h$ by a discrete Poincar\'e inequality and a discrete Korn's inequality, so that $\triplenorm{\cdot}$ is in fact a norm on $\mathring{V}_h$, ensuring the well-posedness of the discrete problem \eqref{eq:FEM} by the Lax--Milgram lemma.

\begin{lemma}[discrete Poincar\'e inequality]
It holds that
\begin{align}\label{eq:poincare}
\|w\|_{L^2(\Gamma_h)}
\lesssim
|w|_{H_h^1(\Gamma_h)}
\qquad  \forall w \in \mathring{V}_h.   
\end{align}
\end{lemma}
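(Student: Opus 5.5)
We reduce the discrete Poincaré inequality to the continuous Poincaré inequality on the polyhedral surface $\Gamma_h$ (uniform in $h$), using the $H^1$-conforming relative $\Pi_h^c$ from \eqref{eq:Pi_h^c}. The obstacle is that $w\in\mathring V_h$ is not continuous across edges, so its broken gradient does not directly control $\|w\|_{L^2(\Gamma_h)}$.

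Since $w$ has zero mean, it minimizes $\|w-c\|_{L^2(\Gamma_h)}$ over constants. With $c$ the mean of $\Pi_h^c w$,
\[
\|w\|_{L^2(\Gamma_h)}\le \|w-\Pi_h^c w\|_{L^2(\Gamma_h)}+\|\Pi_h^c w-c\|_{L^2(\Gamma_h)} .
\]
For the second term we apply the Poincaré inequality for $H^1(\Gamma_h)$ functions. It holds with an $h$-independent constant: lift $\Pi_h^c w$ to $\gamma$, use the norm equivalence \eqref{eq:scalar-norm-equiv} for $m=0,1$ and $|1-\mu_h|\lesssim h^2$, and apply the Poincaré inequality on $\gamma$. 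This gives
\[
\|\Pi_h^c w-c\|_{L^2(\Gamma_h)}\lesssim |\Pi_h^c w|_{H^1(\Gamma_h)}\le |w|_{H_h^1(\Gamma_h)}+|w-\Pi_h^c w|_{H_h^1(\Gamma_h)} .
\]

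It remains to bound $w-\Pi_h^c w$ in $L^2$ and broken $H^1$ by $|w|_{H^1_h}$. By \eqref{eq:sNZT-relative-1} with $m=0,1$, both are controlled by $h^{1/2}\|[w]\|_{L^2(\partial K)}$ and $h^{-1/2}\|[w]\|_{L^2(\partial K)}$. We use the Morley weak continuity: $[w]$ vanishes at both endpoints of each edge, because vertex values are single-valued. The tangential jump satisfies $\int_e[\nabla_{\Gamma_h}w\cdot\bm t_h]\,\dif s_h=0$ by \eqref{eq:weak-continuity}, and its derivative along $e$ is exactly the tangential derivative of $[w]$. Since $[w]|_e$ is a quadratic vanishing at the endpoints, a one-dimensional Poincaré/scaling argument gives
\[
\|[w]\|_{L^2(e)}\lesssim h_e\|\partial_{t}[w]\|_{L^2(e)} .
\]
Combining this with the trace and inverse inequalities on $K_1,K_2$ yields
\[
\|[w]\|_{L^2(e)}\lesssim h^{1/2}\|\nabla_{\Gamma_h} w\|_{L^2(\omega_e)} .
\]
Hence $h^{-1/2}\|[w]\|_{L^2(\partial K)}\lesssim |w|_{H^1_h(\omega_K)}$, and summing over elements with bounded overlap gives
\[
\|w-\Pi_h^c w\|_{H_h^1(\Gamma_h)}\lesssim |w|_{H_h^1(\Gamma_h)} .
\]
Collecting the three estimates proves \eqref{eq:poincare}.

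The main point needing care is the edge jump bound. The one subtlety is that the tangents satisfy $\bm t_1=-\bm t_2$, so the tangential derivative of $[w]$ along $e$ is expressed consistently through the jump convention \eqref{eq:component-jump-average}; the argument is otherwise purely one-dimensional on each edge. The uniformity of the continuous Poincaré constant on $\Gamma_h$ is routine given the lift.
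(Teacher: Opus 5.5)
Your proof is correct, but it takes a longer route than the paper's.

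\textbf{The paper's route.} The paper works with the continuous piecewise \emph{linear} vertex interpolant $w_c$ of $w$, not the quadratic relative $\Pi_h^c w$. On each $K$, the difference $w-w_c$ is a quadratic vanishing at the vertices, and it is zero whenever $w|_K$ is constant. A single scaling argument on the reference triangle therefore gives, element by element,
$h^{-1}\|w-w_c\|_{L^2(K)}+|w_c|_{H^1(K)}\lesssim |w|_{H^1(K)}$.
The paper then applies the uniform Poincar\'e inequality on $\Gamma_h$ to $w_c$, and the zero mean of $w$ absorbs the mean of $w_c$.

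\textbf{Your route.} You go through \eqref{eq:sNZT-relative-1}, which forces you to control the edge jumps $[w]$ by the broken gradient. You do this with a one-dimensional Poincar\'e inequality plus trace and inverse estimates. Each step is valid: $[w]|_e$ vanishes at both endpoints, so $\|[w]\|_{L^2(e)}\lesssim h_e\|\partial_t[w]\|_{L^2(e)}$. This yields $\|[w]\|_{L^2(e)}\lesssim h^{1/2}|w|_{H^1_h(\omega_e)}$, which combined with \eqref{eq:sNZT-relative-1} gives what you need.

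\textbf{Comparison.} The zero-mean tangential-jump condition from \eqref{eq:weak-continuity} that you invoke is redundant, since vanishing at the endpoints already places $[w]|_e$ in $H^1_0(e)$. Both arguments use only vertex continuity, not the edge normal-derivative moments. The paper's version is shorter and purely local, with no edge or trace analysis. Your version buys two things: the reusable local jump bound $\|[w]\|_{L^2(e)}\lesssim h^{1/2}|w|_{H^1_h(\omega_e)}$, and an explicit sketch (via the lift and \eqref{eq:scalar-norm-equiv}) of why the Poincar\'e constant on $\Gamma_h$ is uniform in $h$. The paper only cites that uniformity.
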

\begin{proof}
Let $w_c \in C^0(\Gamma_h)$ be the continuous piecewise linear vertex interpolant of $w$. A standard scaling argument then yields
\[
h^{-1}\|w-w_c\|_{L^2(\Gamma_h)}+ |w_c|_{H^1(\Gamma_h)} \lesssim |w|_{H^1_h(\Gamma_h)}.
\]
Define its zero-mean counterpart $\mathring{w}_c := w_c - \frac{1}{|\Gamma_h|} \int_{\Gamma_h} w_c \mathrm{d}\sigma_h \in \mathring{H}^1(\Gamma_h)$. The uniform Poincar\'e inequality on the closed surface $\Gamma_h$ (see \cite[Section 4.2.1]{bonito2020finite}) yields
\[
\|\mathring{w}_{c}\|_{L^2(\Gamma_h)} \lesssim |\mathring{w}_{c} |_{H^1(\Gamma_h)} = |w_c|_{H^1(\Gamma_h)},
\]
where the hidden constant is independent of $\Gamma_h$.

Using these inequalities and the zero-mean property of $w$, we obtain
\begin{align*}
\|w\|_{L^2(\Gamma_h)} & \leq \|w-w_c\|_{L^2(\Gamma_h)} + \|\mathring{w}_{c}\|_{L^2(\Gamma_h)} 
+ \frac{1}{|\Gamma_h|^{1/2}} \big| \int_{\Gamma_h} (w_c-w) \mathrm{d} \sigma_h\big| \\
&\lesssim \|w-w_c\|_{L^2(\Gamma_h)} + |w_{c}|_{H^1(\Gamma_h)} 
\lesssim  |w|_{H_h^1(\Gamma_h)},
\end{align*}
which establishes the desired inequality.
\end{proof}

\begin{lemma}[discrete Korn's inequality for trace-free Hessian]\label{lem:discrete_korn}
It holds that
\begin{align}\label{eq:discrete_korn_stmt}
\| \nabla_{\Gamma_h}^2 w \|_{L^2(\Gamma_h)}^2
\lesssim
\| \HG w \|_{L^2(\Gamma_h)}^2
+ \| \nabla_{\Gamma_h} w \|_{L^2(\Gamma_h)}^2
+ h^{-3}\| [w]\|_{L^2(\mathcal{E}_h)}^2 \qquad \forall w \in V_h.
\end{align}
Consequently, by discrete Poincar\'e inequality \eqref{eq:poincare},
\begin{align}\label{eq:Hessian-H-eq}
\| w\|_{H^2_h(\Gamma_h)} \eqsim \triplenorm{w}\qquad \forall w \in \mathring{V}_h.
\end{align}
\end{lemma}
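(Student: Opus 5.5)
The plan is to reduce the discrete Korn inequality to an elementwise algebraic identity plus the weak continuity of the Morley space. On each flat triangle $K$, the relation \eqref{eq:H-Hessian} with the constant normal $\bnu_K$ gives pointwise
\[
\HG w|_K = \mathrm{sym}\bigl(\bnu_K^\times \nabla_K^2 w\bigr).
\]
In an orthonormal tangent frame $(\bm t_1,\bm t_2)$ of $K$, write $\nabla_K^2 w$ as the $2\times 2$ symmetric matrix $\begin{pmatrix} a & b\\ b & c\end{pmatrix}$. A direct computation shows that $\HG w$ is the trace-free symmetric matrix with entries $\pm b$ and $\pm\tfrac12(c-a)$. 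Consequently,
\[
|\nabla_K^2 w|^2 = 2|\HG w|^2 + \tfrac12 (\Delta_K w)^2 .
\]
So the whole task is to control the broken Laplacian $\|\Delta_{\Gamma_h} w\|_{L^2(\Gamma_h)}$. For $w\in V_h$ this is elementwise constant, since $w|_K\in\mathcal P_2(K)$.

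To bound the Laplacian, I would use a Korn/Weyl-type argument through the vector field $\curl_{\Gamma_h} w$, whose symmetric gradient is $\HG w$. Pointwise one has $\nabla(\Delta w)=2\,\div\,\HG$-type identities, but these are vacuous for quadratics, so I would argue by duality instead. Pick a smooth (or continuous piecewise linear) test function $\eta$ on $\Gamma_h$ and integrate $\Delta_{\Gamma_h} w\,\eta$ elementwise by parts. The interior terms produce $\nabla w\cdot\nabla\eta$. The edge terms produce $\int_e[\nabla w\cdot\bm n_h]\eta$, which vanish against edgewise constants by \eqref{eq:weak-continuity}.

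The concrete route is a scaling/compactness argument on patches, which is easier to make rigorous. On each element I would write $w=\Pi_h^c w+(w-\Pi_h^c w)$ and use \eqref{eq:sNZT-relative-1}, which gives $\|\nabla^2_{\Gamma_h}(w-\Pi_h^c w)\|_{L^2(K)}\lesssim h^{-3/2}\|[w]\|_{L^2(\partial K)}$. This reduces the claim to $\Pi_h^c w$, a continuous piecewise quadratic. For that function I would combine three facts:
\begin{enumerate}
\item the constant-Laplacian structure;
\item the tangential jump of $\nabla\Pi_h^c w$, which is controlled by value jumps;
\item the normal-derivative jumps, which have zero mean by \eqref{eq:weak-continuity} plus the perturbation from $\Pi_h^c$.
\end{enumerate}
The goal is to show that the broken seminorm $|\Delta_{\Gamma_h}\cdot|$ is controlled by $\|\HG\cdot\|+h^{-1}\|\nabla\cdot\|$-type quantities locally. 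A local bound of that form alone gives only $h^{-1}\|\nabla w\|$, which is insufficient, so the argument must be global. For this I would use a generalized Korn inequality for the piecewise $H^1$ vector field $\bm v=\curl_{\Gamma_h}\Pi_h^c w$ on the polyhedral surface. Its normal jumps are controlled, because $[\bm v\cdot\bm n_h]=[\nabla w\cdot\bm t_h]$ up to sign, and that quantity is governed by value jumps. The inequality reads
\[
\|\nabla_{\Gamma_h}\bm v\|\lesssim \|\mathrm{E}_{\Gamma_h}\bm v\|+\|\bm v\|+\text{jump terms},
\]
in the style of Brenner's piecewise Korn inequality. The contribution of the rigid-motion kernel is absorbed by the $\|\nabla_{\Gamma_h}w\|$ term. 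This works because the surface Korn inequality on $\gamma$ holds with the lower-order $L^2$ term, so no rigid modes need to be quotiented out.

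The main obstacle is exactly this piecewise Korn inequality on the nonplanar polyhedral surface, uniformly in $h$. Tangent planes differ across edges by $O(h)$, so the vector jumps $\llbracket\bm v\rrbracket$ contain geometric components of size $h|\bm v|$; they must be shown to be lower order, and Lemma~\ref{lm:Pnjump} handles the conormal mismatch. My first attempt would be to lift $\bm v$ to $\gamma$ through the Piola transform, which by Lemma~\ref{lm:Piola-equivalence} is norm equivalent. Then apply Brenner's broken Korn inequality chartwise on $\gamma$ with a partition of unity, using the continuous surface Korn inequality. Finally, bound the $O(h)$ geometric commutator terms by $h\|\bm v\|_{H^1_h}$ and absorb them for $h$ small. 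Once $\|\nabla^2_{\Gamma_h}w\|$ is controlled, \eqref{eq:Hessian-H-eq} follows by combining with \eqref{eq:poincare} and the trivial reverse bound.
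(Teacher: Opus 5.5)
Your overall architecture matches the paper's:
\begin{itemize}
\item reduce to the conforming relative $w_c=\Pi_h^c w$ via \eqref{eq:sNZT-relative-1};
\item apply a broken Korn inequality to $\bv_c=\curl_{\Gamma_h}w_c$, for which $\mathrm{E}_{\Gamma_h}\bv_c=\HG w_c$, $\|\nbG\bv_c\|=\|\nabla^2_{\Gamma_h}w_c\|$ and $\|\bv_c\|=\|\nbG w_c\|$;
\item treat the conormal mismatch $\bn_1+\bn_2$ as a lower-order geometric term.
\end{itemize}
However, the step that actually carries the proof is missing. Any Brenner-type inequality contains the jump term $\sum_e h_e^{-1}\|\llbracket\bv_c\rrbracket\|_{L^2(e)}^2$, i.e.\ the full linear part of the vector jump. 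This includes the surface $\mathrm{BDM}_1$ version of Neilan--Wan that the paper invokes. You only account for two pieces of that jump. One is the conormal component $[\bv_c\cdot\bn_h]=-[\nbG w_c\cdot\bt_h]$, which in fact vanishes exactly because $w_c$ is continuous. The other is the geometric part. The edge-tangential component is, up to sign, the conormal-derivative jump $[\nbG w_c\cdot\bn_h]$, and for it you only record that it has zero mean.

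Zero mean removes only the $\mathcal P_0$ part of the jump. Mean-value jump control alone does not yield Korn for general piecewise linear fields; this is the Crouzeix--Raviart obstruction. The naive bound $h_e^{-1}\|[\nbG w\cdot\bn_h]\|_{L^2(e)}^2\lesssim h_e\|\partial_{\bt}[\nbG w\cdot\bn_h]\|_{L^2(e)}^2\lesssim\|\nabla^2_{\Gamma_h}w\|_{L^2(\omega_e)}^2$ is circular, with an $O(1)$ constant that cannot be absorbed. Your remark that the argument ``must be global'' does not supply what is missing.

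The missing idea is an exact identity. Since $w|_K\in\mathcal P_2(K)$, the jump $[\nbG w\cdot\bn_h]$ lies in $\mathcal P_1(e)$. Its derivative along the edge is the jump of the mixed derivative $\partial_{tn}w$, which by \eqref{eq:H-Hessian} is a component of the trace-free Hessian: $\partial_{\bt_1}[\nbG w\cdot\bn_h]=[\bt_h^\top\HG w\,\bt_h]$. Equivalently, $\partial_{\bt}(\bv\cdot\bt)=\bt^\top\mathrm{E}(\bv)\,\bt$. Combining this with the zero mean from \eqref{eq:weak-continuity}, a one-dimensional Poincar\'e inequality and an inverse trace estimate give $h_e^{-1}\|[\nbG w\cdot\bn_h]\|_{L^2(e)}^2\lesssim\|\HG w\|_{L^2(\omega_e)}^2$. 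Passing from $w$ to $w_c$ costs $h_e^{-2}\|\nbG(w_c-w)\|_{L^2(\omega_e)}^2$, which \eqref{eq:sNZT-relative-1} bounds by the stabilization term. This is precisely where the Morley constraint enters essentially.

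Smaller points:
\begin{itemize}
\item Your pointwise identity should read $|\nabla^2_K w|^2=|\HG w|^2+\tfrac12(\Delta_K w)^2$, and the reduction to the Laplacian is not needed.
\item The conormal mismatch only requires $|\bn_1+\bn_2|\lesssim h_e$, not Lemma~\ref{lm:Pnjump}, because the resulting term is bounded by $\|\nbG w_c\|_{L^2(\omega_e)}^2$.
\item The broken Korn inequality on $\Gamma_h$ that you single out as the main obstacle (Piola lift plus chartwise Brenner) can simply be taken from the surface $\mathrm{BDM}_1$ result of Neilan--Wan, since $\bv_c$ is $H(\div_{\Gamma_h})$-conforming.
\end{itemize}
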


\begin{proof}
The proof combines the existing $H^1$-conforming relative with a
vector-Korn estimate.

\emph{Step 1 (reduction using the $H^1$-conforming relative).}
Let $w_c:=\Pi_h^c w$ and set $\bv_c:=\curl_{\Gamma_h}w_c$.
Summing \eqref{eq:sNZT-relative-1} with $m=1,2$ over the mesh and using
quasi-uniformity gives
\begin{equation}\label{eq:korn-relative-defect}
\|\nabla_{\Gamma_h}^2(w_c-w)\|_{L^2(\Gamma_h)}^2
+h^{-2}\|\nabla_{\Gamma_h}(w_c-w)\|_{L^2(\Gamma_h)}^2
\lesssim
h^{-3}\|[w]\|_{L^2(\mathcal E_h)}^2.
\end{equation}
Thus, by the triangle inequality, it remains only to prove
\begin{equation}\label{eq:korn-relative-target}
\|\nabla_{\Gamma_h}^2w_c\|_{L^2(\Gamma_h)}^2
\lesssim
\|\HG w\|_{L^2(\Gamma_h)}^2
+\|\nabla_{\Gamma_h}w\|_{L^2(\Gamma_h)}^2
+h^{-3}\|[w]\|_{L^2(\mathcal E_h)}^2.
\end{equation}
Because $\bnu_h$ is constant on each element,
$\nabla_{\Gamma_h}\bv_c=\bnu_h^\times\nabla_{\Gamma_h}^2w_c$.
Consequently, $\|\nabla_{\Gamma_h}\bv_c\|
=\|\nabla_{\Gamma_h}^2w_c\|$,
$\mathrm E_{\Gamma_h}\bv_c=\HG w_c$, and
$\|\bv_c\|=\|\nabla_{\Gamma_h}w_c\|$. We establish
\eqref{eq:korn-relative-target} by applying the discrete Korn inequality
to $\bv_c$ in Step~2 and controlling its ordinary vector jumps in Step~3.

\emph{Step 2 (discrete Korn for $H(\div;\Gamma_h)$).}
Since $w_c$ is continuous and piecewise quadratic, its tangential
derivative is single-valued on every edge. Thus
$[\bv_c\cdot\bn_h]=-[\nabla_{\Gamma_h}w_c\cdot\bt_h]=0$, and
$\bv_c$ belongs to the $H(\div_{\Gamma_h})$-conforming surface
$\mathrm{BDM}_1$ space by \cite[Lemma~B.1]{neilan2024c}. The
discrete Korn inequality \cite[(B.4)]{neilan2024c}, with the
ordinary vector jump defined by \eqref{eq:ordinary-jump-average},
then gives
\[
\|\nabla_{\Gamma_h}^2w_c\|_{L^2(\Gamma_h)}^2
\lesssim
\|\HG w_c\|_{L^2(\Gamma_h)}^2
+\|\nabla_{\Gamma_h}w_c\|_{L^2(\Gamma_h)}^2
+\sum_{e\in\mathcal E_h}h_e^{-1}
\|\llbracket\bv_c\rrbracket\|_{L^2(e)}^2.
\]

\emph{Step 3 (control of the ordinary vector jumps).}
Fix $e=\partial K_1\cap\partial K_2$ and orient the edge so that
$\bm t_2=-\bm t_1$. On $K_i$, we have
$\bv_c=(\nabla_{\Gamma_h}w_c\cdot\bm n_i)\bm t_i
-(\nabla_{\Gamma_h}w_c\cdot\bm t_i)\bm n_i$. The continuity of the
tangential derivative of $w_c$ gives the exact decomposition
\begin{equation}\label{eq:korn-relative-vector-jump}
\llbracket\bv_c\rrbracket
=
[\nabla_{\Gamma_h}w_c\cdot\bn_h] \,\bm t_1
-
(\nabla_{\Gamma_h}w_c|_{K_1}\cdot\bm t_1)
(\bm n_1+\bm n_2).
\end{equation}
Since $\bm n_i=\bm t_i\times\bm\nu_i$ and $\bm t_2=-\bm t_1$,
the geometric approximation gives $|\bm n_1+\bm n_2|\lesssim h_e$.
An inverse trace estimate then yields
\begin{equation}\label{eq:korn-relative-vector-jump-bound}
h_e^{-1}\|\llbracket\bv_c\rrbracket\|_{L^2(e)}^2
\lesssim
h_e^{-1}\|[\nabla_{\Gamma_h}w_c\cdot\bn_h]\|_{L^2(e)}^2
+
\|\nabla_{\Gamma_h}w_c\|_{L^2(\omega_e)}^2.
\end{equation}

The function $[\nabla_{\Gamma_h}w\cdot\bn_h]\in\mathcal P_1(e)$
has zero mean by the Morley continuity condition, while
\eqref{eq:H-Hessian} gives
$\partial_{\bm t_1}[\nabla_{\Gamma_h}w\cdot\bn_h]
=[\bm t_h^\top\HG w\,\bm t_h]$. Hence the one-dimensional
Poincar\'e inequality and an inverse trace estimate give
$h_e^{-1}\|[\nabla_{\Gamma_h}w\cdot\bn_h]\|_{L^2(e)}^2
\lesssim\|\HG w\|_{L^2(\omega_e)}^2$. Since
$w_c=w+(w_c-w)$, another inverse trace estimate gives
\begin{equation}\label{eq:korn-relative-normal-jump}
h_e^{-1}\|[\nabla_{\Gamma_h}w_c\cdot\bn_h]\|_{L^2(e)}^2
\lesssim
\|\HG w\|_{L^2(\omega_e)}^2
+
h_e^{-2}\|\nabla_{\Gamma_h}(w_c-w)\|_{L^2(\omega_e)}^2.
\end{equation}
Combining \eqref{eq:korn-relative-vector-jump-bound},
\eqref{eq:korn-relative-normal-jump}, and \eqref{eq:korn-relative-defect},
and summing over the edges, we obtain
\begin{equation}\label{eq:korn-relative-jump-sum}
\begin{aligned}
\sum_{e\in\mathcal E_h}h_e^{-1}
\|\llbracket\bv_c\rrbracket\|_{L^2(e)}^2
\lesssim{}&
\|\HG w\|_{L^2(\Gamma_h)}^2
+\|\nabla_{\Gamma_h}w\|_{L^2(\Gamma_h)}^2
+h^{-3}\|[w]\|_{L^2(\mathcal E_h)}^2.
\end{aligned}
\end{equation}

Finally, $\|\HG w_c\|\lesssim
\|\HG w\|+\|\nabla_{\Gamma_h}^2(w_c-w)\|$ and
$\|\nabla_{\Gamma_h}w_c\|\lesssim
\|\nabla_{\Gamma_h}w\|+\|\nabla_{\Gamma_h}(w_c-w)\|$.
Substituting \eqref{eq:korn-relative-defect} and
\eqref{eq:korn-relative-jump-sum} into the vector Korn estimate proves
\eqref{eq:korn-relative-target}. Combining this estimate with
\eqref{eq:korn-relative-defect} and
$\nabla_{\Gamma_h}^2w=\nabla_{\Gamma_h}^2w_c
-\nabla_{\Gamma_h}^2(w_c-w)$ yields \eqref{eq:discrete_korn_stmt}.
\end{proof}

Finally, since the energy norm contains the jump terms, for the 
$H^1$-conforming interpolant defined in \eqref{eq:Pi_h^c}, the estimate 
\eqref{eq:sNZT-relative-1} implies
\begin{equation}\label{eq:sNZT-relative}
\| w - \Pi_h^c w \|_{H^m(\Gamma_h)}
\lesssim
h^{2-m} \triplenorm{w}
\quad \forall w \in V_h,\ 0 \le m \le 2.
\end{equation}

\section{Error estimates} \label{sec:analysis}
This section establishes the error estimates for the scheme~\eqref{eq:FEM}.
Throughout this section, we assume that the exact surface $\gamma$ is of
class $C^4$, the standing regularity requirement under which all the estimates
below are derived. In particular, it supplies, through the elliptic regularity
\eqref{eq:regularity}, the $H^3$ regularity of the stream function that the
analysis relies on.
We follow a Strang-type argument: the consistency errors of the bilinear
form $a_h$ and of the load functional $l_h$ are estimated in
Sections~\ref{subsec:consistency-a} and~\ref{subsec:consistency-l}, and
then combined to prove the energy error estimate
(Section~\ref{subsec:energy-estimate}) and the optimal broken-$H^1$
error estimate, together with the resulting velocity $L^2$ estimate
(Section~\ref{subsec:H1-estimate}).

\subsection{Consistency of the bilinear form}\label{subsec:consistency-a}
We begin by collecting the geometric consistency estimates for the
bilinear form $a_h$.
By combining the geometric approximation properties of the surface trace-free Hessian operator (see, e.g., \cite[Lemma~5.3]{neilan2024c}) with the interpolation error estimate \eqref{eq:I-L0}, we have the following result.
\begin{lemma}[trace-free Hessian approximation] \label{lm:Hessian-interpolation}
For $w \in H^3(\gamma)$, it holds that 
\begin{align}
\|( \mathrm{H}_\gamma w)^e - \mathrm{H}_{\Gamma_h} w^e \|_{L^2(\Gamma_h)} &\lesssim h \|w\|_{H^3(\gamma)}, \label{eq:Hessian-1}\\
\|( \mathrm{H}_\gamma w)^e - \mathrm{H}_{\Gamma_h} I_h w\|_{L^2(\Gamma_h)} &\lesssim h \|w\|_{H^3(\gamma)}. \label{eq:Hessian-interpolation}
\end{align}
\end{lemma}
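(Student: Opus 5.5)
The first estimate \eqref{eq:Hessian-1} is a pointwise geometric comparison, and the second follows from it by the triangle inequality combined with the interpolation estimate \eqref{eq:I-L0}.

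\emph{Proof of \eqref{eq:Hessian-1}.} On each $K\in\mathcal T_h$ we expand $\HG w^e=\mathrm{sym}(\bnu_h^\times\nabla_{\Gamma_h}^2 w^e)$ using $\nabla w^e=(\bP-d\bH)(\nabla_\gamma w)^e$. This gives
\[
\nabla_{\Gamma_h}^2 w^e=\bP_h\nabla\big((\bP-d\bH)(\nabla_\gamma w)^e\big)\bP_h .
\]
Differentiating, we compare the result with $(\nabla_\gamma^2 w)^e=(\bP\nabla(\bP\nabla w^e)\bP)^e$. Every discrepancy term contains at least one of the following factors:
\begin{itemize}
\item $\bP_h-\bP$ or $\bnu_h-\bnu^e$, each of size $\mathcal O(h)$;
\item $d$, of size $\mathcal O(h^2)$;
\item $\nabla d=\bnu^e$, which appears multiplied by $\bP_h$, and $|\bP_h\bnu^e|\lesssim h$.
\end{itemize}
These factors multiply $|(\nabla_\gamma w)^e|$ or $|(\nabla_\gamma^2 w)^e|$ together with bounded curvature coefficients; $\gamma\in C^3$ suffices for the coefficients, and we have $C^4$. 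Hence pointwise on $\Gamma_h$,
\[
|\HG w^e-(\Hg w)^e|\lesssim h\big(|(\nabla_\gamma^2 w)^e|+|(\nabla_\gamma w)^e|\big).
\]
Integrating and applying the norm equivalence \eqref{eq:scalar-norm-equiv} yields \eqref{eq:Hessian-1}. This is also the content of \cite[Lemma~5.3]{neilan2024c}, which we may cite instead.

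\emph{Proof of \eqref{eq:Hessian-interpolation}.} We write
\[
(\Hg w)^e-\HG I_h w=\big((\Hg w)^e-\HG w^e\big)+\HG(w^e-I_h w).
\]
Elementwise, $|\HG v|\le|\nabla_{\Gamma_h}^2 v|$, since $\bnu_h^\times$ is an isometry on tangential vectors. Constants are annihilated by $\HG$, so
\[
\|\HG(w^e-I_h w)\|_{L^2(\Gamma_h)}\le |w^e-\tilde I_h w|_{H_h^2(\Gamma_h)}\lesssim h\|w\|_{H^3(\gamma)},
\]
by \eqref{eq:tildeI-L0} with $m=2$, or equivalently \eqref{eq:I-L0}. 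Adding this to \eqref{eq:Hessian-1} proves \eqref{eq:Hessian-interpolation}.

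\emph{Main step.} The only nontrivial point is the bookkeeping in the first estimate. One must check that the term $\bP_h\nabla d\otimes\bH(\nabla_\gamma w)^e$ is only $\mathcal O(h)$ pointwise, which holds because $|\bP_h\bnu^e|\lesssim h$. First order is all that is claimed here, so no cancellation argument of the type in Theorem~\ref{thm:normal-separated} is needed.
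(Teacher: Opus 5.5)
Your proposal is correct and takes the same route as the paper. The paper obtains \eqref{eq:Hessian-1} by citing the geometric approximation property of the trace-free Hessian in \cite[Lemma~5.3]{neilan2024c}, and your pointwise $\mathcal O(h)$ bookkeeping is a valid self-contained substitute for that citation; it then gets \eqref{eq:Hessian-interpolation} by the triangle inequality with the interpolation estimate, exactly as you do, and your use of \eqref{eq:tildeI-L0} together with the fact that $\HG$ annihilates constants neatly avoids the zero-mean hypothesis attached to \eqref{eq:I-L0}.
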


The next estimate follows from pointwise geometric bounds comparing the surface curl and divergence operators on the exact surface $\gamma$ and the discrete surface $\Gamma_h$. It is stated for an $H^1$-conforming function, which is the setting in which it is applied (to the conforming relatives $\psi_c$, $\epsilon_c$ below).
\begin{lemma}[curl--divergence approximation] \label{lm:geom-biharmonic}
Let $w \in H^3(\gamma)$, let $v_h \in V_h$, and let $v_c := \Pi_h^c v_h \in C^0(\Gamma_h)$ be its $H^1$-conforming relative defined in \eqref{eq:Pi_h^c}, with lift $v_c^\ell \in H^1(\gamma)$. Then,
\begin{equation} \label{eq:geo-1}
\begin{aligned}
\left| -(\div_\gamma (\Hg w), \curl_\gamma v_c^\ell)_\gamma
+(\div_{\Gamma_h} (\Hg w)^e, \curl_{\Gamma_h} v_c)_{\Gamma_h} \right| &\lesssim h \|w\|_{H^3(\gamma)} \|v_c\|_{H^1(\Gamma_h)} \\
&\lesssim h \|w\|_{H^3(\gamma)} \triplenorm{v_h}.
\end{aligned}
\end{equation}
\end{lemma}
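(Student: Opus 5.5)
The plan is to prove the estimate by a direct pointwise comparison of integrands after transporting everything to $\Gamma_h$, accepting a first-order loss from the normal mismatch. This loss is affordable here because only $\mathcal O(h)$ is claimed. Set $\bm\tau:=\Hg w$, which is a symmetric tangential tensor with $\|\bm\tau\|_{H^1(\gamma)}\lesssim\|w\|_{H^3(\gamma)}$, and set $\bm\sigma:=\div_\gamma\bm\tau$, with $\|\bm\sigma\|_{L^2(\gamma)}\lesssim\|w\|_{H^3(\gamma)}$. Using $\mathrm d\sigma=\mu_h\,\mathrm d\sigma_h$, rewrite the continuous term as
\[
(\div_\gamma\bm\tau,\curl_\gamma v_c^\ell)_\gamma=\int_{\Gamma_h}\bm\sigma^e\cdot(\curl_\gamma v_c^\ell)^e\,\mu_h\,\mathrm d\sigma_h .
\]
Since $|1-\mu_h|\lesssim h^2$, replacing $\mu_h$ by $1$ costs only $h^2\|w\|_{H^3}\|v_c\|_{H^1}$. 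It therefore suffices to bound, in $L^2(\Gamma_h)$, the two pointwise differences
\[
\bm\sigma^e-\div_{\Gamma_h}\bm\tau^e
\qquad\text{and}\qquad
(\curl_\gamma v_c^\ell)^e-\curl_{\Gamma_h}v_c ,
\]
and then apply Cauchy--Schwarz together with the norm equivalence \eqref{eq:scalar-norm-equiv}.

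\emph{Divergence term.} Row-wise, $(\div_{\Gamma_h}\bm\tau^e)_i=\tr(\bP_h\nabla\bm\tau_i^e)$. By \eqref{eq:nabla-trans}, $\nabla\bm\tau_i^e=(\nabla_\gamma\bm\tau_i)^e(\bP-d\bH)$, while $(\div_\gamma\bm\tau)_i=\tr(\bP\nabla_\gamma\bm\tau_i)$. Because $|\bP_h-\bP|\lesssim h$ and $|d|\lesssim h^2$, we get
\[
|\div_{\Gamma_h}\bm\tau^e-(\div_\gamma\bm\tau)^e|\lesssim h\,|(\nabla_\gamma\bm\tau)^e|
\]
pointwise, and hence an $L^2(\Gamma_h)$ bound of $h\|w\|_{H^3(\gamma)}$.

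\emph{Curl term.} Since $v_c$ is continuous and piecewise smooth on $\Gamma_h$, its lift $v_c^\ell$ lies in $H^1(\gamma)$, and the chain rule gives on each $K$
\[
\curl_{\Gamma_h}v_c=\bnu_h\times\nabla_{\Gamma_h}v_c,\qquad
\nabla_{\Gamma_h}v_c=\bP_h(\bP-d\bH)(\nabla_\gamma v_c^\ell)^e .
\]
Comparing with $\bnu\times\nabla_\gamma v_c^\ell$ and using $|\bnu-\bnu_h|\lesssim h$ yields
\[
|\curl_{\Gamma_h}v_c-(\curl_\gamma v_c^\ell)^e|\lesssim h\,|\nabla_{\Gamma_h}v_c| .
\]
Alternatively, one can invoke Corollary~\ref{cor:Piola-curl} elementwise and note that the Piola factor differs from the identity by $\mathcal O(h)$ on tangential vectors.

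\emph{Assembling.} Splitting the difference of the two inner products into (i) the divergence mismatch against $\curl_{\Gamma_h}v_c$, (ii) $\bm\sigma^e$ against the curl mismatch, and (iii) the $\mu_h$ term gives the first inequality, with bound $h\|w\|_{H^3(\gamma)}\|v_c\|_{H^1(\Gamma_h)}$. For the second inequality, \eqref{eq:sNZT-relative} with $m=0,1$ gives $\|v_c\|_{H^1(\Gamma_h)}\lesssim\|v_h\|_{H^1_h(\Gamma_h)}+h\triplenorm{v_h}$. It then remains to check that $\|v_h\|_{H^1_h(\Gamma_h)}\lesssim\triplenorm{v_h}$. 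The gradient part is directly contained in $\triplenorm{\cdot}$. The $L^2$ part is the main obstacle, because $V_h$ contains constants, so a zero mean must be exploited. In the intended applications $v_h\in\mathring V_h$, and the discrete Poincaré inequality \eqref{eq:poincare} then applies. If $v_h$ is a general element of $V_h$, I would first subtract its mean: the curl of a constant vanishes, so the left-hand side is unchanged, and the second bound follows for the mean-free part. I do not foresee any other delicate point, since no cancellation is required at first order.
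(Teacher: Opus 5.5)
Your proposal is correct and follows essentially the same route as the paper's sketch: transport the $\gamma$-integral to $\Gamma_h$ using $|1-\mu_h|\lesssim h^2$, prove the pointwise $\mathcal O(h)$ bounds for the divergence and curl mismatches, apply Cauchy--Schwarz, and use the relative estimate \eqref{eq:sNZT-relative} for the second inequality. Your extra care with the constant mode fills a point the paper leaves implicit, since $\|v_c\|_{H^1(\Gamma_h)}\lesssim\triplenorm{v_h}$ fails for constant $v_h$. You handle it either by the discrete Poincar\'e inequality on $\mathring V_h$, or by subtracting the mean, which works because $\Pi_h^c$ reproduces constants and neither the left-hand side nor $\triplenorm{\cdot}$ sees them.
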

\begin{proof}[Sketch of proof]
Set $\bA := \Hg w$. Transforming the integral over $\gamma$ to $\Gamma_h$ through the lift and using $\mathrm{d}\sigma = \mu_h \,\mathrm{d}\sigma_h$ with $|1-\mu_h| \lesssim h^2$, it suffices to compare the two integrands pointwise on $\Gamma_h$. The surface divergence and curl are built from the tangential projection and the normal, which satisfy $|\bP - \bP_h| \lesssim h$ and $|\bnu - \bnu_h| \lesssim h$. Hence the standard pointwise geometric bounds (cf.~\cite{dziuk2013finite,demlow2009higher}) give
\[
\big|\div_{\Gamma_h} \bA^e - (\div_\gamma \bA)^e\big| \lesssim h \big(|\bA| + |\nabla_\gamma \bA|\big)^e,
\qquad
\big|\curl_{\Gamma_h} v_c - (\curl_\gamma v_c^\ell)^e\big| \lesssim h |\nabla_{\Gamma_h} v_c|.
\]
Inserting these into the pairing, together with the area estimate and the Cauchy--Schwarz inequality yield the first bound. The second follows from the relative estimate \eqref{eq:sNZT-relative}, which gives $\|v_c\|_{H^1(\Gamma_h)} \lesssim \triplenorm{v_h}$.
\end{proof}

\paragraph{Second-order consistency of the trace-free Hessian form.}
The form $j_h$ defined below collects the consistency error of $a_h$ associated with the trace-free Hessian. The first-order geometric consistencies above are routine; its second-order consistency is the main difficulty, and it is here that the geometric estimate of Section~\ref{sec:geo-error} is essential.

Define $j_h : H^3(\gamma) \times H_h^2(\Gamma_h) \to \mathbb{R}$ as follows:
\begin{equation}\label{eq:jump-part}
j_h(w,\psi_h) := ((\Hg w)^e, \HG \psi_h)_{\Gamma_h} + (\div_{\Gamma_h} (\Hg w)^e, \curl_{\Gamma_h} \psi_h)_{\Gamma_h}.
\end{equation}

\begin{lemma}[consistency of $j_h$]\label{lm:jump-estimate}
For $w \in H^3(\gamma)$, it holds that 
\begin{align} 
|j_h(w,\psi_h)| &\lesssim h \|w\|_{H^3(\gamma)} \triplenorm{\psi_h}  \quad \forall \psi_h \in V_h, \label{eq:jump-estimate1}\\
|j_h(w,I_h \psi)| &\lesssim h^2 \|w\|_{H^3(\gamma)} \|\psi\|_{H^3(\gamma)}  \quad \forall \psi \in H^3(\gamma). \label{eq:jump-estimate-Ih} \\
|j_h(w,\psi^e)| &\lesssim h^2 \|w\|_{H^3(\gamma)} \|\psi\|_{H^3(\gamma)}\quad \forall \psi\in H^3(\gamma). \label{eq:jump-estimate-e} 
\end{align}
\end{lemma}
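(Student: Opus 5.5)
The plan is to integrate by parts elementwise on $\Gamma_h$, which turns $j_h$ into an edge sum of conormal fluxes. Set $\bA:=\Hg w$. It is symmetric and tangential, and $\|\bA\|_{H^1(\gamma)}\lesssim\|w\|_{H^3(\gamma)}$. For a piecewise smooth $\psi_h$ on $K$, $\bnu_K$ is constant, so $\HG\psi_h=\mathrm{sym}(\nabla_{\Gamma_h}\curl_{\Gamma_h}\psi_h)$. Since $\bA^e$ is symmetric, $\bA^e:\HG\psi_h=\bA^e:\nabla_{\Gamma_h}\curl_{\Gamma_h}\psi_h$, up to terms involving $\bP_h\bA^e\bP_h-\bA^e$. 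We keep track of these terms, or absorb them, as in Lemma~\ref{lem:neilan-volume-estimate}. The elementwise Green formula then gives
\begin{equation*}
j_h(w,\psi_h)=\sum_{K}\int_{\partial K}(\bA^e\bn_K)\cdot\curl_{\Gamma_h}\psi_h\,\dif s+\mathcal R_h(w,\psi_h),
\end{equation*}
where $\mathcal R_h$ collects the normal-component defects. Using \eqref{eq:edge-flux-decomposition}, the edge sum splits into
\[
\langle\{\bA^e\bn_h\},\llbracket\curl_{\Gamma_h}\psi_h\rrbracket\rangle_{\mathcal E_h}
\quad\text{and}\quad
\langle\llbracket\bA^e\bn_h\rrbracket,\{\curl_{\Gamma_h}\psi_h\}\rangle_{\mathcal E_h}.
\]

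The second and third bounds are easy and come first. For \eqref{eq:jump-estimate-e}, $j_h(w,\psi^e)$ is exactly the left side of Corollary~\ref{cor:stokes-tensor-green} with $\psi$ and $\bA$, so the bound $h^2\|w\|_{H^3}\|\psi\|_{H^3}$ is immediate. For \eqref{eq:jump-estimate-Ih}, write $j_h(w,I_h\psi)=j_h(w,\psi^e)-j_h(w,\psi^e-I_h\psi)$. For the difference, use the edge representation with $\psi_h:=\psi^e-I_h\psi$. The average-flux term is bounded by trace inequalities through $\|\llbracket\curl_{\Gamma_h}(\psi^e-I_h\psi)\rrbracket\|_{L^2(e)}$. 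Write $\curl=\bnu_h^\times\nabla_{\Gamma_h}$ in the $(\bt,\bn)$ frame and use \eqref{eq:normal-jump}, \eqref{eq:t-jump}, and $|\bn_1+\bn_2|\lesssim h$; the latter multiplies a factor $\|\nabla_{\Gamma_h}(\psi^e-I_h\psi)\|_{L^2(e)}\lesssim h^{3/2}$. This gives $h^{3/2}$ on each edge, and together with $\|\bA\|_{L^2(e)}$ scaling as $h^{-1/2}$ per element it yields $h$ only. To get $h^2$, use the Morley mean-zero property $\int_e[\nabla_{\Gamma_h}I_h\psi\cdot\bn_h]=0$ and the corresponding property for $\bt_h$. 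This allows subtracting the edge mean of $\{\bA^e\bn_h\}$, which gains an extra factor $h\,|\bA|_{H^1}$. The jump-flux term is handled by writing $\llbracket\bA^e\bn_h\rrbracket=\bA^e\llbracket\bn_h\rrbracket$ and applying Lemma~\ref{lm:Pnjump}: since $\bA$ is tangential, $|\bA^e\llbracket\bn_h\rrbracket|\lesssim h^2|\bA^e|$. The trace of $\curl_{\Gamma_h}(\psi^e-I_h\psi)$ is $\mathcal O(h^{3/2})$ per edge, which gives even more than needed. The remainder $\mathcal R_h$ carries factors $|\bP_h\bnu|\lesssim h$ times second derivatives of the interpolation error, which are $\mathcal O(h)$, so it contributes $h^2$.

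For \eqref{eq:jump-estimate1}, the bound $h\|w\|_{H^3}\triplenorm{\psi_h}$ follows the same edge representation, now with mean subtraction only. In the average-flux term, $\llbracket\curl_{\Gamma_h}\psi_h\rrbracket$ has mean-zero $[\cdot\bn_h]$ and $[\cdot\bt_h]$ components. Its $(\bn_1+\bn_2)$ part is $\mathcal O(h)|\nabla\psi_h|$. Replacing $\psi_h$ by its conforming relative via \eqref{eq:sNZT-relative} where needed, and using Poincaré on edges with the jump penalty in $\triplenorm{\cdot}$, gives $h\|\bA\|_{H^1}\triplenorm{\psi_h}$. The jump-flux term is handled by Lemma~\ref{lm:Pnjump} directly.

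The main obstacle is the second-order bound \eqref{eq:jump-estimate-Ih}, specifically the average-flux term. The vector jump of the curl of a Morley function is not mean-zero, because the conormals on the two sides differ at $\mathcal O(h)$. This part must be shown to pair with $\nabla_{\Gamma_h}(\psi^e-I_h\psi)$, which is $\mathcal O(h^2)$ pointwise in an averaged sense by \eqref{eq:I-D1}, and not with $\nabla\psi$ itself. The decomposition \eqref{eq:korn-relative-vector-jump}, applied to $\psi^e-I_h\psi$, is what I would try first.
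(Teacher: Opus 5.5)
Your proof of \eqref{eq:jump-estimate-e} is the paper's. Your edge representation is also exact: since $\bA^e$ is symmetric and $\bnu_K$ is constant, $\bA^e:\HG\psi_h=\bA^e:\nabla_{\Gamma_h}\curl_{\Gamma_h}\psi_h$, so $\mathcal R_h\equiv 0$.

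In \eqref{eq:jump-estimate1}, however, the $\llbracket\bn_h\rrbracket$-part of $\llbracket\curl_{\Gamma_h}\psi_h\rrbracket$ cannot be handled with $|\bn_1+\bn_2|\lesssim h$ alone. There $\nabla_{\Gamma_h}\psi_h$ is not small, so that bound gives only $\mathcal O(1)\,\triplenorm{\psi_h}$. You need $\{\bA^e\bn_h\}\cdot\llbracket\bn_h\rrbracket=\tfrac12(\bn_1-\bn_2)^\top\bA^e\bP\llbracket\bn_h\rrbracket=\mathcal O(h^2)|\bA^e|$, which follows from tangentiality and Lemma~\ref{lm:Pnjump}. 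This is what the paper uses for $j_2,j_4$.

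\textbf{The genuine gap is in \eqref{eq:jump-estimate-Ih}.} For $\eta:=\psi^e-I_h\psi$ one has exactly
\[
\llbracket\curl_{\Gamma_h}\eta\rrbracket
=[\nabla_{\Gamma_h}\eta\cdot\bn_h]\,\bt_1+[\nabla_{\Gamma_h}\eta\cdot\bt_h]\,\bn_2-(\nabla_{\Gamma_h}\eta|_{K_1}\cdot\bt_1)\,\llbracket\bn_h\rrbracket .
\]
The Morley mean-zero properties cover only the $I_h\psi$-parts of the two component jumps. The $\psi^e$-part of the normal jump is
\[
[\nabla_{\Gamma_h}\psi^e\cdot\bn_h]=(\nabla_\gamma\psi)^e\cdot(\bP-d\bH)\llbracket\bn_h\rrbracket .
\]
This is purely geometric and $\mathcal O(h^2)$ pointwise by Lemma~\ref{lm:Pnjump}, but it has no vanishing edge mean. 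Paired with $\{\bA^e\bn_h\}\cdot\bt_1$ it gives only $h^2\|\bA^e\|_{L^2(\mathcal E_h)}\|(\nabla_\gamma\psi)^e\|_{L^2(\mathcal E_h)}\lesssim h\|w\|_{H^3(\gamma)}\|\psi\|_{H^2(\gamma)}$.

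This is not slack in the bookkeeping. All other pieces of $j_h(w,\eta)$ are $\mathcal O(h^2)$, and $j_h(w,\psi^e)=\mathcal O(h^2)$. (The $\llbracket\bn_h\rrbracket$-term is harmless here because $\nabla_{\Gamma_h}\eta$ is small, so the obstacle you single out at the end is not the real one.) Hence, along your route, \eqref{eq:jump-estimate-Ih} is equivalent to this single skeleton sum being $\mathcal O(h^2)$.

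Corollary~\ref{cor:conormal-flux} does not give that. To leading order, the factor multiplying $\llbracket\bn_h\rrbracket$ is $(\bt_1^\top\bA^e\bn_1)(\nabla_\gamma\psi)^e$, which depends on the edge direction. It is therefore not the trace of one smooth tangential field $\bm\chi$, and Lemma~\ref{lem:weak-conormal-flux} needs such a field to turn the edge sum into $\int_{\Gamma_h}\div_{\Gamma_h}\bm\chi^e$.

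\textbf{How the paper proceeds.} The paper never splits off $\psi^e$. The terms $j_1(w,I_h\psi)$ and $j_3(w,I_h\psi)$ involve only the exactly mean-zero jumps of $I_h\psi$ and are bounded via \eqref{eq:normal-jump}--\eqref{eq:t-jump}. The paper then rewrites $j_2+j_4$ as $\langle\llbracket\bA^e\bn_h\rrbracket,\{\curl_{\Gamma_h}I_h\psi\}\rangle_{\mathcal E_h}$. After replacing $\{\curl_{\Gamma_h}I_h\psi\}$ by $(\curl_\gamma\psi)^e$, the conormal-flux consistency \eqref{eq:geo-trace-2} applies. That corollary is the second-order geometric input your plan lacks.

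Be aware, though, that this rewriting is not an exact identity. $[\bn_h^\top\bA^e\bn_h]=\llbracket\bn_h\rrbracket^\top\bA^e(\bn_1-\bn_2)$ contains the conormal jump twice, while the corresponding part of $\llbracket\bA^e\bn_h\rrbracket\cdot\{\curl_{\Gamma_h}I_h\psi\}$ contains it once. Up to higher order, the discrepancy is
\[
-\langle\{\nabla_{\Gamma_h}I_h\psi\cdot\bt_h\},\bn_1^\top\bA^e\llbracket\bn_h\rrbracket\rangle_{\mathcal E_h},
\]
a frame-dependent skeleton sum of the same type as the one blocking you. So in either route, reaching $h^2$ needs an extra cancellation argument for such sums, beyond the mean-zero properties and Corollary~\ref{cor:conormal-flux}.
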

\begin{proof}
The second-order estimates draw on the geometric consistency results of
Section~\ref{sec:geo-error} as follows: \eqref{eq:jump-estimate-e} is a
direct specialization of Corollary~\ref{cor:stokes-tensor-green}
(Stokes-type tensor Green consistency), while the second-order part of
\eqref{eq:jump-estimate-Ih} relies on the conormal-flux consistency
\eqref{eq:geo-trace-2}.
First, the pure geometric estimate \eqref{eq:jump-estimate-e} follows from \eqref{eq:jh-extension} by taking $\bA = \Hg w$. For the other two estimates, decompose $j_h$ as follows:
\begin{equation*}
j_h(w,\psi_h) =\sum_{T\in \mathcal{T}_h} \int_{\partial T} ( (\Hg w)^e\bn_h)\cdot \curl_{\Gamma_h} \psi_h \, \mathrm{d}s_h
= \sum_{i=1}^4 j_i(w,\psi_h).
\end{equation*}
where 
\begin{equation*}
\begin{aligned}
j_1(w,\psi_h) &:= -\langle \{\bm{n}_h^\top (\Hg w)^e \, \bm{n}_h \} , [\nabla_{\Gamma_h} \psi_h\cdot\bm{t}_h]\rangle_{\mathcal{E}_h}, \\
j_2(w,\psi_h) &:= -\langle [\bm{n}_h^\top (\Hg w)^e \, \bm{n}_h ] , \{ \nabla_{\Gamma_h} \psi_h\cdot\bm{t}_h\} \rangle_{\mathcal{E}_h}, \\
j_3(w,\psi_h) &:= \langle \{\bm{t}_h^\top (\Hg w)^e \, \bm{n}_h \} , [\nabla_{\Gamma_h} \psi_h\cdot \bm{n}_h]\rangle_{\mathcal{E}_h}, \\
j_4(w,\psi_h) &:= \langle [\bm{t}_h^\top (\Hg w)^e \, \bm{n}_h ] , \{ \nabla_{\Gamma_h} \psi_h\cdot\bm{n}_h \} \rangle_{\mathcal{E}_h}.
\end{aligned}
\end{equation*}

We estimate the four components of $j_h(w,\psi_h)$ separately.
For $\psi_h\in V_h$, using the weak continuity \eqref{eq:weak-continuity}, the trace inequality, and standard approximation theory, we obtain
\begin{equation}\label{eq:j1-h}
\begin{aligned}
|j_1(w,\psi_h)| &\leq  \|\{\bm{n}_h^\top (\Hg w)^e \, \bm{n}_h \} - C_e \|_{L^2(\mathcal{E}_h)} \|[\nabla_{\Gamma_h} \psi_h \cdot \bm{t}_h]\|_{L^2(\mathcal{E}_h)} \\
&\lesssim h \|w\|_{H^3(\gamma)} \triplenorm{\psi_h},
\end{aligned}
\end{equation}
where $C_e=P_e^0 \{\bm{n}_h^\top (\Hg w)^e \, \bm{n}_h \}$ and $P_e^0: L^2(\mathcal{E}_h) \to \mathcal{P}_0(\mathcal{E}_h)$ is the local $L^2$ projection onto the piecewise constants on the skeleton. An analogous argument gives $|j_3(w,\psi_h)|\lesssim h \|w\|_{H^3(\gamma)} \triplenorm{\psi_h}$.

By the geometric bound $\big|\bP \llbracket \bn_h \rrbracket \big|\lesssim h^2$ in Lemma \ref{lm:Pnjump}, we deduce
\[
\big|[\bm{t}_h^\top (\Hg w)^e \, \bm{n}_h ]\big|\lesssim \big| (\Hg w)^e \big|\, \big| \bP \llbracket \bn_h \rrbracket \big| \lesssim h^2|(\Hg w)^e|,
\]
and further,
\begin{equation}
\begin{aligned}
\big|[\bm{n}_h^\top (\Hg w)^e \, \bm{n}_h ]\big| &= \big| (\bm{n}_1+\bm{n}_2)^\top (\Hg w)^e \, (\bm{n}_1-\bm n_2)  \big| \\
&= \big| (\bm{n}_1-\bm{n}_2)^\top (\Hg w)^e \, \bP \llbracket \bn_h \rrbracket \big| \lesssim h^2|(\Hg w)^e|,
\end{aligned}
\end{equation}
where both bounds use the tangentiality of $(\Hg w)^e$ to replace $\bm{n}_1 + \bm{n}_2$ by $\bP \llbracket \bn_h \rrbracket$ in the last factor. These bounds immediately imply
\begin{equation*}
\begin{aligned}
&\quad |j_2(w,\psi_h)|+|j_4(w,\psi_h)| \\
&\lesssim h^2 \|(\Hg w)^e \|_{L^2(\mathcal{E}_h)} \Big( \|\{\nabla_{\Gamma_h} \psi_h \cdot \bm{t}_h \}\|_{L^2(\mathcal{E}_h)}+ \|\{\nabla_{\Gamma_h} \psi_h \cdot \bm{n}_h \}\|_{L^2(\mathcal{E}_h)}\Big) \\
&\lesssim h \|w\|_{H^3(\gamma)} \triplenorm{\psi_h}.
\end{aligned}
\end{equation*}
Combining the estimates for all four parts completes the proof of \eqref{eq:jump-estimate1}.

A weak continuity argument similar to \eqref{eq:j1-h}, combined with the jump estimates for the interpolation \eqref{eq:normal-jump}--\eqref{eq:t-jump}, yields
\begin{equation*}
|j_1(w, I_h \psi)| + |j_3(w, I_h \psi)| \lesssim h^2 \|w\|_{H^3(\gamma)} \|\psi \|_{H^3(\gamma)}.
\end{equation*}
Recombining the two components in the edge frame, $j_2+j_4$ is the pairing of the vector jump $\llbracket (\Hg w)^e\bm n_h\rrbracket$ with the average discrete curl,
\begin{equation*}
j_2(w, I_h \psi) + j_4(w, I_h \psi)
= \big\langle \llbracket (\Hg w)^e\bm n_h\rrbracket,\ \{\curl_{\Gamma_h}I_h \psi\}\big\rangle_{\mathcal{E}_h}.
\end{equation*}
Then, combining \eqref{eq:geo-trace-2} with the approximation properties \eqref{eq:I-L0} and \eqref{eq:Hessian-H-eq}, we obtain
\begin{equation*}
\begin{aligned}
&~\quad |j_2(w, I_h \psi) + j_4(w, I_h \psi)|\\
&\leq |\langle\llbracket (\Hg w)^e\bm n_h\rrbracket,\ (\curl_\gamma\psi)^e\rangle_{\mathcal{E}_h}|+|\langle\llbracket (\Hg w)^e\bm n_h\rrbracket,\ \{\curl_{\Gamma_h}I_h \psi \}-(\curl_\gamma\psi)^e\rangle_{\mathcal{E}_h}|\\
& \lesssim h^2 \|w\|_{H^3(\gamma)} \|\psi\|_{H^2(\gamma)}+ h^{3/2}\|\Hg w \|_{H^1(\gamma)} h^{1/2}\|\psi\|_{H^2(\gamma)} \\
&\lesssim h^2 \|w\|_{H^3(\gamma)} \|\psi\|_{H^2(\gamma)}.
\end{aligned}
\end{equation*}
This proves \eqref{eq:jump-estimate-Ih}.
\end{proof}

The following lemma is the key second-order estimate of the analysis. Whereas the individual consistency terms above are only first order, it shows that the consistency error of the form $a_h$ (the part of the discrete bilinear form without the stabilization $s_h^0$, whose consistency is treated separately in Section~\ref{subsec:H1-estimate}) is second order. This is precisely the estimate that lifts the broken-$H^1$ error to the optimal second order in the duality argument of Section~\ref{subsec:H1-estimate}. It is here that the second-order geometric consistency of the trace-free Hessian form, established through $j_h$, is used in full.
\begin{lemma}[second-order consistency of $a_h$] \label{lm:equivalence-consistency}
For any $w, \psi \in H^3(\gamma)$, it holds that 
\begin{equation} \label{eq:bilinear-approx-equiv}
    \big| a_h(I_h w, I_h \psi) - a(w, \psi) \big| \lesssim h^2 \|w\|_{H^3(\gamma)} \|\psi\|_{H^3(\gamma)}.
\end{equation}
\end{lemma}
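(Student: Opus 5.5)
We split $a_h - a$ into the gradient part and the trace-free Hessian part. For the gradient part, write
\[
(\nabla_{\Gamma_h} I_h w,\nabla_{\Gamma_h} I_h\psi)_{\Gamma_h}-(\nabla_\gamma w,\nabla_\gamma\psi)_\gamma
\]
as the sum of $(\nabla_{\Gamma_h} w^e,\nabla_{\Gamma_h}\psi^e)_{\Gamma_h}-(\nabla_\gamma w,\nabla_\gamma\psi)_\gamma$ and interpolation cross terms. The geometric difference is controlled by Corollary~\ref{cor:first-order-tangential}. That corollary is applied with $\bm\chi=\nabla_\gamma\psi$, after replacing $\nabla_{\Gamma_h}\psi^e$ by $\bP_h(\nabla_\gamma\psi)^e$ up to $\mathcal O(h^2)$ and using $\bP_h(\nabla_\gamma\psi)^e\cdot\bP_h(\nabla_\gamma w)^e=\bP_h(\nabla_\gamma w)^e\cdot(\nabla_\gamma\psi)^e$. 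The interpolation terms $(\nabla_{\Gamma_h}(I_hw-w^e),\nabla_{\Gamma_h}\psi^e)$ etc.\ are $O(h^2)$ directly from \eqref{eq:I-L0} with $m=1$.

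For the Hessian part, the key device is to insert $(\Hg w)^e$ and use the form $j_h$ twice. We write
\[
(\HG I_hw,\HG I_h\psi)_{\Gamma_h}=\big((\Hg w)^e,\HG I_h\psi\big)_{\Gamma_h}+\big(\HG I_hw-(\Hg w)^e,\HG I_h\psi\big)_{\Gamma_h}.
\]
The second term is rewritten by inserting $(\Hg\psi)^e$ in the right slot. Its cross term $(\HG I_hw-(\Hg w)^e,\HG I_h\psi-(\Hg\psi)^e)$ is $O(h^2)$ by Lemma~\ref{lm:Hessian-interpolation}. The remaining piece $(\HG I_hw-(\Hg w)^e,(\Hg\psi)^e)$ is, by symmetry, of the same type as the first term with the roles of $w$ and $\psi$ swapped, plus $-((\Hg w)^e,(\Hg\psi)^e)_{\Gamma_h}$. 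So everything reduces to showing
\[
\big((\Hg w)^e,\HG I_h\psi\big)_{\Gamma_h}=(\Hg w,\Hg\psi)_\gamma+O(h^2),
\]
together with the observation that $((\Hg w)^e,(\Hg\psi)^e)_{\Gamma_h}-(\Hg w,\Hg\psi)_\gamma=O(h^2)$, which holds because $|\mu_h-1|\lesssim h^2$.

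For the displayed identity, use the definition \eqref{eq:jump-part}:
\[
((\Hg w)^e,\HG I_h\psi)_{\Gamma_h}=j_h(w,I_h\psi)-(\div_{\Gamma_h}(\Hg w)^e,\curl_{\Gamma_h}I_h\psi)_{\Gamma_h}.
\]
The first term is $O(h^2)$ by \eqref{eq:jump-estimate-Ih}. In the second term, replace $I_h\psi$ by $\psi^e$; the error is
\[
\|\div_{\Gamma_h}(\Hg w)^e\|_{L^2}\,\|\nabla_{\Gamma_h}(I_h\psi-\psi^e)\|_{L^2}\lesssim h^2\|w\|_{H^3}\|\psi\|_{H^3}
\]
by \eqref{eq:I-L0}. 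Then apply Lemma~\ref{lem:stokes-curl-div} with $\bA=\Hg w\in H^1$ (symmetric and tangential) to pass to $-(\curl_\gamma\psi,\div_\gamma\Hg w)_\gamma=(\Hg w,\Hg\psi)_\gamma$, using integration by parts on $\gamma$ as in \eqref{eq:l_continue}. Equivalently, combine \eqref{eq:jump-estimate-e} with \eqref{eq:A-Hg-HG-estimate1}.

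The main obstacle is bookkeeping, ensuring that no first-order term survives. Each first-order quantity, such as $\HG I_hw-(\Hg w)^e$, must always be paired either with another first-order quantity or with a smooth extended field to which $j_h$ or the Green consistency applies. Pairing $\HG$ with the smooth field $(\Hg\psi)^e$ rather than with $\HG\psi^e$ is what makes this possible, and the symmetric decomposition above is designed for exactly that.
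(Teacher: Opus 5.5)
Your argument is correct, and its skeleton matches the paper's proof. You treat the gradient part via Corollary~\ref{cor:first-order-tangential} plus interpolation, the cross term via \eqref{eq:Hessian-interpolation}, and $((\Hg w)^e,(\Hg\psi)^e)_{\Gamma_h}$ via the $\mu_h$ estimate. You then handle $j_h(w,I_h\psi)$ together with the curl--divergence interpolation error, which are the paper's $J_3$ and $J_2$.

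The one genuine difference is how the smooth--smooth piece is closed. The paper splits $j_h(w,I_h\psi-\psi^e)$ and bounds two terms separately: $-j_h(w,\psi^e)$ by \eqref{eq:jump-estimate-e}, and $((\Hg w)^e,\HG\psi^e-(\Hg\psi)^e)_{\Gamma_h}$ by \eqref{eq:A-Hg-HG-estimate}. The term $((\Hg w)^e,\HG\psi^e)_{\Gamma_h}$ enters these two with opposite signs. So Lemma~\ref{lem:neilan-volume-estimate} is invoked twice, once directly and once inside Corollary~\ref{cor:stokes-tensor-green}, and its two contributions cancel.

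Your primary route never forms that term. It uses only Lemma~\ref{lem:stokes-curl-div} and integration by parts on $\gamma$. It therefore shows that the trace-free Hessian consistency (and with it Corollary~\ref{cor:stokes-tensor-green} and \eqref{eq:jump-estimate-e}) is not actually needed for this lemma. Your ``equivalently'' remark is precisely the paper's route.

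The paper's version is more modular, since each piece is matched to a separately stated geometric estimate. Yours is more economical. Neither changes the regularity requirement, because $H^3$ is still needed for \eqref{eq:jump-estimate-Ih} and for the interpolation bounds.
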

\begin{proof}
Denote $w_I := I_h w$ and $\psi_I := I_h \psi$. The following decomposition holds:
$$
\begin{aligned}
 a_h(w_I, \psi_I) - a(w, \psi)
&=  (\HG w_I-(\Hg w)^e, (\Hg \psi)^e)_{\Gamma_h} + ((\Hg w)^e,\HG \psi_I -(\Hg \psi)^e)_{\Gamma_h} \\
&\quad + (\HG w_I-(\Hg w)^e, \HG \psi_I -(\Hg \psi)^e)_{\Gamma_h}\\
&\quad + ((\Hg w)^e, (\Hg \psi)^e)_{\Gamma_h}-(\Hg w, \Hg \psi)_{\gamma}\\
&\quad +  (\nabla_{\Gamma_h} w_I, \nabla_{\Gamma_h} \psi_I)_{\Gamma_h} - (\nabla_\gamma w, \nabla_\gamma \psi)_\gamma \\
&:= I_1+I_2+I_3+I_4 .
\end{aligned}
$$
This can be further decomposed as follows:
\begin{equation*}
\begin{aligned}
I_1&:= (\HG w_I-(\Hg w)^e, (\Hg \psi)^e)_{\Gamma_h} + ((\Hg w)^e,\HG \psi_I -(\Hg \psi)^e)_{\Gamma_h}\\
&= (\HG w^e-(\Hg w)^e, (\Hg \psi)^e)_{\Gamma_h} + ((\Hg w)^e,\HG \psi^e -(\Hg \psi)^e)_{\Gamma_h}\\
&\quad -(\div_{\Gamma_h}(\Hg \psi)^e, \curl_{\Gamma_h} (w_I - w^e))_{\Gamma_h} -(\div_{\Gamma_h}(\Hg w)^e, \curl_{\Gamma_h} (\psi_I - \psi^e))_{\Gamma_h} \\
&\quad +j_h(\psi,w_I - w^e)+j_h(w,\psi_I-\psi^e)\\
&:= J_1+J_2+J_3
\end{aligned}
\end{equation*}

The remaining terms are controlled as follows:
\begin{itemize}
\item $I_2$: by the trace-free Hessian consistency \eqref{eq:Hessian-interpolation};
\item $I_3$: by the geometric measure approximation $|1-\mu_h|\lesssim h^2$;
\item $I_4$: by the superconvergence estimate \eqref{eq:first-order-tangential};
\item $J_1,J_2$: by the trace-free Hessian consistency \eqref{eq:A-Hg-HG-estimate} and the interpolation estimate \eqref{eq:I-L0};
\item $J_3$: by the consistency estimates \eqref{eq:jump-estimate-Ih} and \eqref{eq:jump-estimate-e}.
\end{itemize}
Each term is bounded by $h^2 \|w\|_{H^3(\gamma)}\|\psi\|_{H^3(\gamma)}$, which completes the proof.
\end{proof}

\subsection{Consistency of the load functional}\label{subsec:consistency-l}
For the source term, we assume that
$\|\bm{f} - \bm{f}_h^\ell\|_{L^2(\gamma)} \lesssim h^2 \|\bm{f}\|_{L^2(\gamma)}$,
which is satisfied, for instance, by choosing $\bm{f}_h = \bm{f}^e$.
Then, by \cite[Lemma~5.1]{neilan2024c}, together with the $H^1$-conforming approximation estimate \eqref{eq:sNZT-relative} and the interpolation estimate \eqref{eq:I-L0}, we obtain the following result.

\begin{lemma}[source consistency]
The following estimates hold:
\begin{subequations}
\begin{align}
| l((\Pi_h^c \psi_h)^\ell) - l_h(\psi_h)|
&\lesssim h \|\bm{f}\|_{L^2(\gamma)} \triplenorm{\psi_h}
\quad \forall \psi_h \in \mathring{V}_{h},
\label{eq:source-estimate1} \\
| l(\psi)-l_h(I_h\psi)|
&\lesssim h^2 \|\bm{f}\|_{L^2(\gamma)} \|\psi \|_{H^3(\gamma)}
\quad \forall \psi \in H^3(\gamma).
\label{eq:source-estimate2}
\end{align}
\end{subequations}
\end{lemma}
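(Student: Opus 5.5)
Both estimates rest on a transfer of the load from $\gamma$ to $\Gamma_h$ through the lift, followed by a comparison of the surface curls. For both we write
\[
l(v^\ell)-l_h(v)=\big[(\bm f,\curl_\gamma v^\ell)_\gamma-(\bm f^e,\curl_{\Gamma_h}v)_{\Gamma_h}\big]+(\bm f^e-\bm f_h,\curl_{\Gamma_h}v)_{\Gamma_h}
\]
for a suitable $H^1$-conforming $v$ on $\Gamma_h$. The data term is bounded by the assumption $\|\bm f-\bm f_h^\ell\|_{L^2(\gamma)}\lesssim h^2\|\bm f\|_{L^2(\gamma)}$ together with norm equivalence. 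The geometric bracket is what \cite[Lemma~5.1]{neilan2024c} controls. In our notation it is first order in general, $\lesssim h\|\bm f\|_{L^2(\gamma)}\|v\|_{H^1(\Gamma_h)}$, because $|\curl_{\Gamma_h}v-(\curl_\gamma v^\ell)^e|\lesssim h|\nabla_{\Gamma_h}v|$ and $|1-\mu_h|\lesssim h^2$. It is second order, $\lesssim h^2\|\bm f\|_{L^2(\gamma)}\|v^\ell\|_{H^2(\gamma)}$, when $v$ is the extension of a smooth function. For the second-order case I will rely on the cited lemma. If a self-contained argument is needed, I would instead use the Piola identity: $\widebreve{\curl_\gamma w}$ satisfies $(\bm f,\curl_\gamma w)_\gamma=(\mu_h^{-1}\ldots)$, and Corollary~\ref{cor:Piola-curl} gives $|\widebreve{\curl_\gamma w}-\curl_{\Gamma_h}w^e|\lesssim h^2|(\nabla_\gamma w)^e|$. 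The remaining mismatch between $\bm f^e\cdot\widebreve{\bm g}$ and $\bm f\cdot\bm g$ involves only the normal component of $\bm f^e$ against $\bnu_h$-directions, which is $O(h)$. I would therefore aim at this point to use the $\bP_h\bnu$-type estimate, Corollary~\ref{cor:Phn}.

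For \eqref{eq:source-estimate1} we take $v=\Pi_h^c\psi_h$, so that the left-hand side equals $l(v^\ell)-l_h(v)+l_h(v-\psi_h)$. The first piece is $\lesssim h\|\bm f\|\,\|v\|_{H^1(\Gamma_h)}$ by the first-order bound above. The second is $\lesssim\|\bm f\|\,\|\nabla_{\Gamma_h}(\Pi_h^c\psi_h-\psi_h)\|_{L^2(\Gamma_h)}\lesssim h\|\bm f\|\triplenorm{\psi_h}$ by \eqref{eq:sNZT-relative} with $m=1$. Finally, $\|v\|_{H^1}\lesssim\triplenorm{\psi_h}$, again by \eqref{eq:sNZT-relative} and \eqref{eq:Hessian-H-eq}.

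For \eqref{eq:source-estimate2} we split
\[
l(\psi)-l_h(I_h\psi)=\big[l(\psi)-l_h(\psi^e)\big]+l_h(\psi^e-I_h\psi).
\]
The first bracket is second order by the smooth-function case above. This is the main obstacle, since the pointwise normal error is only $O(h)$; it must be handled by the cited lemma or by the Piola/$\bP_h\bnu$ route sketched above. For the second term, note that only the mean-value shift separates $I_h\psi$ from $\tilde I_h\psi$, and constants have zero curl. Hence
\[
|l_h(\psi^e-I_h\psi)|\le\|\bm f_h\|_{L^2(\Gamma_h)}\,|\psi^e-\tilde I_h\psi|_{H^1_h(\Gamma_h)}\lesssim h^2\|\bm f\|_{L^2(\gamma)}\|\psi\|_{H^3(\gamma)}
\]
by \eqref{eq:tildeI-L0} with $m=1$. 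Combining the two bounds gives the claim.
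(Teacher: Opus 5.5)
Your proposal is correct and is essentially the paper's argument: the paper also takes the geometric part from \cite[Lemma~5.1]{neilan2024c}, then finishes \eqref{eq:source-estimate1} with \eqref{eq:sNZT-relative} and \eqref{eq:source-estimate2} with the interpolation estimate (your use of \eqref{eq:tildeI-L0} together with the fact that constants have zero curl even sidesteps the zero-mean hypothesis of \eqref{eq:I-L0}). The one point to correct is in your optional self-contained fallback: no $\bP_h\bnu$-type cancellation is needed there, and Corollary~\ref{cor:Phn} would bring in a derivative of $\bm f$, which a bound in terms of $\|\bm f\|_{L^2(\gamma)}$ cannot afford; the reason is that with $\bm a:=(\nabla_\gamma\psi)^e$ one has $\curl_{\Gamma_h}\psi^e-(\curl_\gamma\psi)^e=(\bnu_h-\bnu)\times\bm a+\mathcal O(h^2)|\bm a|$ and $\bm f^e\cdot\bigl((\bnu_h-\bnu)\times\bm a\bigr)=(\bnu_h-\bnu)\cdot(\bm a\times\bm f^e)$, where $\bm a\times\bm f^e\parallel\bnu$ and $|(\bnu_h-\bnu)\cdot\bnu|\lesssim h^2$, so the mismatch is already pointwise $\mathcal O(h^2)|\bm f||\bm a|$ for any $H^1(\gamma)$ function (equivalently, $\bm f^e\cdot\bnu^e=0$ annihilates the rank-one part of the Piola matrix).
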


\subsection{Energy norm error estimate}\label{subsec:energy-estimate}
With the consistency estimates of the bilinear form and the load functional in hand, we now combine them through a Strang-type argument to obtain the energy norm error estimate. Here, only the first-order consistency is needed.

\begin{theorem}[energy norm error estimate] \label{tm:energy-estimate}
For $C^4$ surface $\gamma$, let $\phi$ be the exact solution of the surface Stokes problem in stream-function form \eqref{eq:s-streamStokes}, and let $\phi_h \in \mathring{V}_{h}$ be the solution of \eqref{eq:FEM}. Then, 
\begin{equation} \label{eq:energy-estimate}
\|\phi^e - \phi_h\|_{H^2_h(\Gamma_h)} \lesssim h(\|\phi\|_{H^3(\gamma)}+ \|\bm{f}\|_{L^2(\gamma)}) \lesssim h \|\bm{f}\|_{L^2(\gamma)}. 
\end{equation}
\end{theorem}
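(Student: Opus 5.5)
We follow a Strang-type argument in the discrete energy norm. By \eqref{eq:Hessian-H-eq}, it suffices to bound $\triplenorm{I_h\phi-\phi_h}$ and $\|\phi^e-I_h\phi\|_{H^2_h(\Gamma_h)}$. The latter is $\lesssim h\|\phi\|_{H^3(\gamma)}$ by \eqref{eq:I-L0} with $m=2$.

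For the discrete error, set $\epsilon_h:=I_h\phi-\phi_h\in\mathring V_h$. Coercivity of $a_h+s_h^0$ in $\triplenorm{\cdot}$ gives
\[
\triplenorm{\epsilon_h}^2=a_h(I_h\phi,\epsilon_h)+s_h^0(I_h\phi,\epsilon_h)-l_h(\epsilon_h).
\]
We bound the three pieces separately.

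\emph{Stabilization.} By the value-jump estimate \eqref{eq:value-jump},
\[
|s_h^0(I_h\phi,\epsilon_h)|\le\Big(\sum_e h_e^{-3}\|[I_h\phi]\|_{L^2(e)}^2\Big)^{1/2}\triplenorm{\epsilon_h}\lesssim h\|\phi\|_{H^3(\gamma)}\triplenorm{\epsilon_h}.
\]

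\emph{Bilinear form.} Write $a_h(I_h\phi,\epsilon_h)$ as follows. First replace $\HG I_h\phi$ by $(\Hg\phi)^e$, with error $\lesssim h\|\phi\|_{H^3}\triplenorm{\epsilon_h}$ by \eqref{eq:Hessian-interpolation}. Next replace $\nabla_{\Gamma_h}I_h\phi$ by $\widebreve{\nabla_\gamma\phi}$, or equivalently by $\nabla_{\Gamma_h}\phi^e$, with $\mathcal O(h^2)$ error by \eqref{eq:I-D1} and Corollary~\ref{cor:Piola-derivative}. Then use the definition \eqref{eq:jump-part} of $j_h$:
\[
((\Hg\phi)^e,\HG\epsilon_h)_{\Gamma_h}=j_h(\phi,\epsilon_h)-(\div_{\Gamma_h}(\Hg\phi)^e,\curl_{\Gamma_h}\epsilon_h)_{\Gamma_h}.
\]
The term $j_h(\phi,\epsilon_h)$ is $\lesssim h\|\phi\|_{H^3}\triplenorm{\epsilon_h}$ by \eqref{eq:jump-estimate1}.

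In the divergence term, replace $\epsilon_h$ by its conforming relative $\epsilon_c:=\Pi_h^c\epsilon_h$, at cost $h\|\phi\|_{H^3}\triplenorm{\epsilon_h}$ by \eqref{eq:sNZT-relative} with $m=1$. Do the same in the gradient term and in $l_h$, where $|l_h(\epsilon_h)-l_h(\epsilon_c)|\lesssim h\|\bm f\|\triplenorm{\epsilon_h}$. Lemma~\ref{lm:geom-biharmonic} then transfers the divergence term to $-(\div_\gamma\Hg\phi,\curl_\gamma\epsilon_c^\ell)_\gamma$ up to $\mathcal O(h)$. The gradient term $(\nabla_{\Gamma_h}\phi^e,\nabla_{\Gamma_h}\epsilon_c)$ transfers to $(\nabla_\gamma\phi,\nabla_\gamma\epsilon_c^\ell)_\gamma$ up to $\mathcal O(h)$ by the standard pointwise bounds $|\bP-\bP_h|\lesssim h$ and $|1-\mu_h|\lesssim h^2$.

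\emph{Load and cancellation.} Use \eqref{eq:source-estimate1} to replace $l_h(\epsilon_h)$ by $l(\epsilon_c^\ell)$ up to $h\|\bm f\|\triplenorm{\epsilon_h}$. Since $\epsilon_c^\ell\in H^1(\gamma)$ only, we cannot use the weak form \eqref{eq:weak_form} directly. Instead use the strong form: $\phi\in H^3$ and the first identity in \eqref{eq:l_continue} extend by density to $H^1$ test functions. After subtracting the mean of $\epsilon_c^\ell$ (harmless, since constants are annihilated by $\curl_\gamma$ and $\nabla_\gamma$), this gives
\[
-(\div_\gamma\Hg\phi,\curl_\gamma\epsilon_c^\ell)_\gamma+(\nabla_\gamma\phi,\nabla_\gamma\epsilon_c^\ell)_\gamma=l(\epsilon_c^\ell).
\]
The remaining quantities therefore cancel exactly.

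Collecting the bounds gives $\triplenorm{\epsilon_h}\lesssim h(\|\phi\|_{H^3}+\|\bm f\|_{L^2})$, and the final inequality follows from \eqref{eq:regularity}.

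The main obstacle is this last cancellation step. It requires justifying that the continuous identity holds with the merely $H^1$ test function $\epsilon_c^\ell$ (via density and $\rot_\gamma\bm f\in H^{-1}$), and checking that every transfer between $\Gamma_h$ and $\gamma$ costs only $\mathcal O(h)$ measured in $\triplenorm{\epsilon_h}$, through $\|\epsilon_c\|_{H^1}\lesssim\triplenorm{\epsilon_h}$.
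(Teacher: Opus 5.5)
Your proposal is correct and follows essentially the same Strang-type argument as the paper. It uses the same pieces: the stabilization bounded via \eqref{eq:value-jump}, the Hessian-interpolation and first-order $j_h$ bounds \eqref{eq:Hessian-interpolation} and \eqref{eq:jump-estimate1}, the replacement by the conforming relative via \eqref{eq:sNZT-relative}, the curl--divergence transfer of Lemma~\ref{lm:geom-biharmonic}, the gradient transfer, the source estimate \eqref{eq:source-estimate1}, and the cancellation through \eqref{eq:l_continue}. The differences are cosmetic: you test with $\epsilon_h$ directly rather than taking a supremum over $\psi_h$, and you justify extending \eqref{eq:l_continue} to the merely $H^1$ test function $\epsilon_c^\ell$ (by density and mean subtraction) more explicitly than the paper does.
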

\begin{proof}
Define $\phi_I := I_h \phi \in \mathring{V}_h$ and let $\psi_c := \Pi_h^c \psi_h\in C^0(\Gamma_h)$ be the $H^1$-conforming relative defined in \eqref{eq:Pi_h^c}. Utilizing coercivity, it follows that
\begin{equation}\label{eq:energy-dep}
\begin{aligned}
\triplenorm{\phi_I - \phi_h} &\lesssim \sup_{\psi_h \in \mathring{V}_h} \frac{a_h(\phi_I - \phi_h, \psi_h)+s_h^0(\phi_I - \phi_h, \psi_h) }{\triplenorm{\psi_h}}\\
&\leq \sup_{\psi_h \in \mathring{V}_h}\frac{|a_h(\phi_I, \psi_h)-l(\psi_c^\ell)|}{\triplenorm{\psi_h}} 
+\sup_{\psi_h \in \mathring{V}_h} \frac{|s_h^0(\phi_I,\psi_h)|}{\triplenorm{\psi_h}}
+ \sup_{\psi_h \in \mathring{V}_h} \frac{|l(\psi_c^\ell)- l_h(\psi_h)|}{\triplenorm{\psi_h}}.
\end{aligned}    
\end{equation}
Since $\psi_c \in H^1(\Gamma_h)$, the term $l(\psi_c^\ell)$ can be expanded using \eqref{eq:l_continue}, which allows for the decomposition of $a_h(\phi_I, \psi_h) - l(\psi_c^\ell)$ as follows:
\begin{equation*}
\begin{aligned}
a_h(\phi_I, \psi_h) - l(\psi_c^\ell)&= (\HG \phi_I, \HG \psi_h)_{\Gamma_h}  + (\div_\gamma (\Hg \phi), \curl_\gamma \psi_c^\ell)_\gamma \\
&\quad +(\nabla_{\Gamma_h} \phi_I,\nabla_{\Gamma_h} \psi_h)_{\Gamma_h}-(\nabla_\gamma \phi,\nabla_\gamma  \psi_c^\ell )_\gamma = \sum_{i=1}^5 I_{i},
\end{aligned}
\end{equation*}
where
\begin{equation*} 
\begin{aligned}
I_{1}&:= (\HG \phi_I - (\Hg \phi)^e, \HG \psi_h)_{\Gamma_h} \\
I_{2}&:=  ((\Hg \phi)^e, \HG \psi_h)_{\Gamma_h} + (\div_{\Gamma_h} (\Hg \phi)^e, \curl_{\Gamma_h} \psi_h)_{\Gamma_h} \\
I_{3}&:=  - (\div_{\Gamma_h} (\Hg \phi)^e, \curl_{\Gamma_h}(\psi_h - \psi_c))_{\Gamma_h} \\
I_{4}&:=  - (\div_{\Gamma_h} (\Hg \phi)^e, \curl_{\Gamma_h}\psi_c)_{\Gamma_h} + (\div_\gamma (\Hg \phi), \curl_\gamma \psi_c^\ell)_\gamma \\
I_{5}&:= (\nabla_{\Gamma_h} \phi_I,\nabla_{\Gamma_h} \psi_h)_{\Gamma_h}-(\nabla_\gamma \phi,\nabla_\gamma  \psi_c^\ell )_\gamma.
\end{aligned}
\end{equation*}

The first four terms are estimated as follows, each bounded by $h \|\phi\|_{H^3(\gamma)}\triplenorm{\psi_h}$:
\begin{itemize}
\item $I_1,I_2$: by the trace-free Hessian consistency \eqref{eq:Hessian-interpolation} and the jump estimate \eqref{eq:jump-estimate1};
\item $I_3$: by the relative estimate \eqref{eq:sNZT-relative}, since $|I_3|\lesssim \|\phi\|_{H^3(\gamma)}\,|\psi_h-\psi_c|_{H^1_h(\Gamma_h)}$;
\item $I_4$: by the curl--divergence approximation Lemma~\ref{lm:geom-biharmonic} applied to $\psi_c=\Pi_h^c\psi_h$.
\end{itemize}
For $I_5$, we write
\[
\begin{aligned}
I_5 = & (\nabla_{\Gamma_h}\phi_I,\nabla_{\Gamma_h}(\psi_h-\psi_c))_{\Gamma_h}
+ (\nabla_{\Gamma_h}(\phi_I-\phi^e),\nabla_{\Gamma_h}\psi_c)_{\Gamma_h} \\
& + \big[(\nabla_{\Gamma_h}\phi^e,\nabla_{\Gamma_h}\psi_c)_{\Gamma_h} - (\nabla_\gamma\phi,\nabla_\gamma\psi_c^\ell)_\gamma\big],
\end{aligned}
\]
the first two terms are bounded by $h\|\phi\|_{H^3(\gamma)}\triplenorm{\psi_h}$ using the relative estimate \eqref{eq:sNZT-relative} and the interpolation estimate \eqref{eq:I-L0}, while the last term is $O(h^2)$ by the superconvergence estimate \eqref{eq:first-order-tangential}. Hence $|I_5| \lesssim h \|\phi\|_{H^3(\gamma)}\triplenorm{\psi_h}$.
Combining the estimates for $I_1,\dots,I_5$, we have
\begin{equation} \label{eq:ah-consistency}
|a_h(\phi_I, \psi_h) - l( \psi_c^\ell)| \lesssim h \|\phi\|_{H^3(\gamma)} \triplenorm{\psi_h}.
\end{equation}

By the jump estimate of the interpolation \eqref{eq:value-jump}, the stabilization term satisfies
\begin{equation} \label{eq:sh-bound}
|s_h^0(\phi_I,\psi_h)| \lesssim h \|\phi\|_{H^3(\gamma)} \triplenorm{\psi_h}.
\end{equation}

Substituting \eqref{eq:ah-consistency}, \eqref{eq:sh-bound}, and the source term estimate \eqref{eq:source-estimate1} into the decomposition \eqref{eq:energy-dep}, yields
$\triplenorm{\phi_I - \phi_h}\lesssim h(\|\phi\|_{H^3(\gamma)} + \|\bm{f} \|_{L^2(\gamma)})$.
Applying the triangle inequality and the norm equivalence \eqref{eq:Hessian-H-eq} completes the proof.
\end{proof}

\subsection{Broken $H^1$ error estimate and velocity $L^2$ estimate} \label{subsec:H1-estimate}
We now establish the second-order estimate in the broken $H^1$ norm by an Aubin--Nitsche duality argument. The point is that this optimal order is attained under only the natural regularity: a $C^4$ surface and a stream function $\phi \in H^3(\gamma)$, which is exactly the elliptic regularity afforded by \eqref{eq:s-streamStokes} for an $L^2$ right-hand side, with no additional smoothness assumed. What makes this possible is the sharp geometric consistency of Section~\ref{sec:geo-error}, entering through Lemma~\ref{lm:equivalence-consistency} (the second-order consistency of $a_h$). The resulting broken $H^1$ error estimate in turn yields the optimal $L^2$ estimate for the recovered velocity.
\begin{theorem}[broken $H^1$ error estimate] \label{tm:H1-estimate}
For $C^4$ surface $\gamma$, let $\phi$ be the exact solution of the surface Stokes problem in stream-function form \eqref{eq:s-streamStokes}, and let $\phi_h \in \mathring{V}_{h}$ be the discrete solution of \eqref{eq:FEM}. It holds that
\begin{equation}\label{eq:H1-estimate}
    \| \phi^e - \phi_h \|_{H^1_h(\Gamma_h)} \lesssim h^2  (\|\phi\|_{H^3(\gamma)} + \| \bm{f} \|_{L^2(\gamma)}) \lesssim h^2 \|\bm{f}\|_{L^2(\gamma)}.
\end{equation}
\end{theorem}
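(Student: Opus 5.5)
We will use an Aubin--Nitsche duality argument. Set $e_h:=\phi_I-\phi_h\in\mathring{V}_h$ with $\phi_I:=I_h\phi$. By \eqref{eq:I-L0} with $m=1$ we have $\|\phi^e-\phi_I\|_{H^1_h(\Gamma_h)}\lesssim h^2\|\phi\|_{H^3(\gamma)}$, so it suffices to bound $|e_h|_{H^1_h(\Gamma_h)}$; the discrete Poincar\'e inequality \eqref{eq:poincare} then controls the $L^2$ part. To extract this seminorm through the dual problem, we lift a conforming relative of $e_h$. Set $e_c:=\Pi_h^c e_h$, subtract its mean, and denote the lift by $\epsilon:=e_c^\ell-\overline{e_c^\ell}\in\mathring H^1(\gamma)$. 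By \eqref{eq:sNZT-relative} and the energy estimate we have $|e_h-e_c|_{H^1_h}\lesssim h\triplenorm{e_h}\lesssim h^2$. Let $z\in\mathring H^2(\gamma)$ solve $a(\psi,z)=(\nabla_\gamma\epsilon,\nabla_\gamma\psi)_\gamma$ for all $\psi$. The right-hand side is an $H^{-1}$ functional (it equals $(-\Delta_\gamma\epsilon,\psi)$), so the regularity argument behind \eqref{eq:regularity} gives $\|z\|_{H^3(\gamma)}\lesssim\|\epsilon\|_{H^1(\gamma)}\lesssim|e_h|_{H^1_h}+h^2$.

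Next we write $|\epsilon|_{H^1}^2=a(\epsilon,z)$ and pass to the discrete forms. Following the decomposition in the proof of Theorem~\ref{tm:energy-estimate}, with $\psi_h$ replaced by $e_h$, the form $a(\epsilon,z)$ is replaced by $a_h(e_h,I_hz)+s_h^0(e_h,I_hz)$ up to the errors analysed there, but now paired with a smooth dual function. The error equation then reads
\[
a_h(e_h,I_hz)+s_h^0(e_h,I_hz)=\big[a_h(\phi_I,I_hz)-a(\phi,z)\big]+\big[l(z)-l_h(I_hz)\big]+s_h^0(\phi_I,I_hz).
\]
The first bracket is $O(h^2)\|\phi\|_{H^3}\|z\|_{H^3}$ by Lemma~\ref{lm:equivalence-consistency}. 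The second is $O(h^2)$ by \eqref{eq:source-estimate2}. The stabilization term is bounded using \eqref{eq:value-jump} for both factors, which gives $h^{-3}h^{5/2}h^{5/2}=h^2$.

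It remains to compare $a_h(e_h,I_hz)$ with $a(\epsilon,z)$ up to $O(h^2)\|z\|_{H^3}$ plus $h\triplenorm{e_h}\|z\|_{H^3}$; the latter is again $O(h^2)$ by Theorem~\ref{tm:energy-estimate}. Here we repeat the $I_1$–$I_5$ splitting with the roles of trial and test swapped: $e_h$ is the discrete argument and $z$ the smooth one. The first-order bounds (Lemma~\ref{lm:geom-biharmonic}, \eqref{eq:jump-estimate1}, \eqref{eq:Hessian-interpolation}, \eqref{eq:first-order-tangential}) each carry a factor $h\triplenorm{e_h}$, and so they become second order. The term $s_h^0(e_h,I_hz)$ is bounded by $\triplenorm{e_h}\,h^{-3/2}\|[I_hz]\|\lesssim h\triplenorm{e_h}\|z\|_{H^3}$, which is again $O(h^2)$.

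Combining these bounds gives $|\epsilon|_{H^1}^2\lesssim h^2(\|\phi\|_{H^3}+\|\bm f\|_{L^2})\|z\|_{H^3}$. Dividing by $\|z\|_{H^3}\lesssim|\epsilon|_{H^1}+h^2$ and using the norm equivalence \eqref{eq:scalar-norm-equiv} to return to $e_c$ and hence to $e_h$ yields \eqref{eq:H1-estimate}.

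The main obstacle is this last comparison, because the first-order geometric terms must be multiplied by a quantity that is itself $O(h)$. For most terms that quantity is $\triplenorm{e_h}$. The exception is a pure-geometry mismatch between $a(\epsilon,z)$ and its $\Gamma_h$ version built from $e_c$: any piece of the form $(\text{geometric }O(h)\text{ factor})\times\nabla z\times\nabla e_c$ is not automatically small. To handle it, we will rewrite these pieces as $\bP_h\bnu$-type integrals and apply Corollary~\ref{cor:first-order-tangential}. Where that does not suffice, we will use the full Theorem~\ref{thm:normal-separated}, with $\tilde F$ built by replacing $\bnu$ by $\bz$ in the operators acting on $z$, so that these pieces are also $O(h^2)$.
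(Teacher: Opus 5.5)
Your proposal is correct and is essentially the paper's own Aubin--Nitsche argument. It uses the same dual problem with data $-\Delta_\gamma$ applied to the lifted conforming relative; here $a(\epsilon,z)$ must be read as $-(\div_\gamma\Hg z,\curl_\gamma\epsilon)_\gamma+(\nabla_\gamma z,\nabla_\gamma\epsilon)_\gamma$, since $\epsilon\notin H^2(\gamma)$. It uses the same comparison terms, each bounded by $h\|z\|_{H^3(\gamma)}\triplenorm{e_h}$, and it invokes Lemma~\ref{lm:equivalence-consistency}, \eqref{eq:source-estimate2} and \eqref{eq:value-jump} for $a_h(\phi_I,I_hz)-a(\phi,z)$, the load, and the stabilization respectively.

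The ``main obstacle'' in your last paragraph does not arise. The pure-geometry mismatches are exactly the terms handled by Lemma~\ref{lm:geom-biharmonic} and \eqref{eq:first-order-tangential}, and these are bounded by $h\|z\|_{H^3(\gamma)}\|e_c\|_{H^1(\Gamma_h)}\lesssim h\|z\|_{H^3(\gamma)}\triplenorm{e_h}\lesssim h^2(\|\phi\|_{H^3(\gamma)}+\|\bm f\|_{L^2(\gamma)})\|z\|_{H^3(\gamma)}$. So no further use of Theorem~\ref{thm:normal-separated} is needed. It would not apply directly anyway: $\nabla_\gamma e_c^\ell$ jumps across the lifted edges, so the relevant $\div_\gamma\bq_F$ is not in $L^1(\gamma)$.
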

\begin{proof}
We denote $\phi_I := I_h \phi$ and $\epsilon_h := \phi_I - \phi_h \in \mathring{V}_h$, and let $\epsilon_c := \Pi_h^c \epsilon_h$ be the $H^1$-conforming relative defined in \eqref{eq:Pi_h^c}, which satisfies $\epsilon_c^\ell \in H^1(\gamma)$. Setting $g := - \Delta_\gamma \epsilon_c^\ell \in \mathring{H}^{-1}(\gamma)$, we consider the auxiliary problem
\begin{equation} \label{eq:aux-problem-sub}
\frac{1}{2}\Delta_\gamma^2 w +\div_\gamma((K-1)\nabla_\gamma w) = g \quad \text{on } \gamma,
\end{equation}
whose solution satisfies $\|w\|_{H^3(\gamma)} \lesssim \|g\|_{\mathring{H}^{-1}(\gamma)}$ by the elliptic regularity \eqref{eq:regularity}. Since $\|g\|_{\mathring{H}^{-1}(\gamma)} = \|\Delta_\gamma \epsilon_c^\ell \|_{\mathring{H}^{-1}(\gamma)} \lesssim \|\epsilon_c^\ell\|_{H^1(\gamma)}$, we obtain
\begin{equation} \label{eq:regularity-bound}
\|w\|_{H^3(\gamma)} \lesssim \|g\|_{\mathring{H}^{-1}(\gamma)} \lesssim \|\epsilon_c^\ell\|_{H^1(\gamma)} \lesssim |\epsilon_c^\ell|_{H^1(\gamma)}.
\end{equation}

Next, testing the auxiliary equation \eqref{eq:aux-problem-sub} with $\epsilon_c^\ell$ gives
\begin{equation*}
\begin{aligned}
|\epsilon_c^\ell|_{H^1(\gamma)}^2 &= (g,\epsilon_c^\ell)_\gamma
= -(\div_\gamma (\Hg w), \curl_\gamma \epsilon_c^\ell)_\gamma +(\nabla_\gamma w,\nabla_\gamma  \epsilon_c^\ell )_\gamma \\
& = \sum_{i=1}^5 I_{i}+a_h(w_I, \epsilon_h),
\end{aligned}
\end{equation*}
where 
\begin{equation*}
\begin{aligned}
I_{1}& := -(\div_\gamma (\Hg w), \curl_\gamma \epsilon_c^\ell)_\gamma +(\div_{\Gamma_h} (\Hg w)^e, \curl_{\Gamma_h} \epsilon_c)_{\Gamma_h}, \\
I_{2}& :=  (\nabla_\gamma w,\nabla_\gamma  \epsilon_c^\ell )_\gamma-(\nabla_{\Gamma_h} w^e,\nabla_{\Gamma_h} \epsilon_c)_{\Gamma_h}, \\
I_{3}& := - (\div_{\Gamma_h} (\Hg w)^e, \curl_{\Gamma_h} (\epsilon_c-\epsilon_h))_{\Gamma_h} + (\nabla_{\Gamma_h} w^e,\nabla_{\Gamma_h} (\epsilon_c-\epsilon_h))_{\Gamma_h},\\
I_{4}& := - (\div_{\Gamma_h} (\Hg w)^e, \curl_{\Gamma_h} \epsilon_h)_{\Gamma_h} - ((\Hg w)^e,\HG \epsilon_h)_{\Gamma_h}, \\
I_{5}& :=  ( (\Hg w)^e-\HG w_I,\HG \epsilon_h )_{\Gamma_h} +(\nabla_{\Gamma_h} (w^e-w_I) ,\nabla_{\Gamma_h} \epsilon_h)_{\Gamma_h}.
\end{aligned}
\end{equation*}
The five terms $I_i$ are estimated as follows, each bounded by $h \|w\|_{H^3(\gamma)}\triplenorm{\epsilon_h}$:
\begin{itemize}
\item $I_1$: by the curl--divergence approximation \eqref{eq:geo-1};
\item $I_2$: by the superconvergence estimate \eqref{eq:first-order-tangential};
\item $I_3$: by the properties of the $H^1$-conforming relative \eqref{eq:sNZT-relative};
\item $I_4$: by the consistency estimate \eqref{eq:jump-estimate1};
\item $I_5$: by the interpolation estimates \eqref{eq:Hessian-interpolation} and \eqref{eq:I-L0}.
\end{itemize}

For the remaining term $a_h(w_I, \epsilon_h)$, the finite element scheme \eqref{eq:FEM} gives the decomposition
\begin{equation*}
\begin{aligned}
a_h(w_I, \epsilon_h)=a_h(w_I,\phi_I)-a_h(w_I,\phi_h)
= a_h(w_I,\phi_I) - a(w,\phi) +\sum_{i=1}^3 J_{i},
\end{aligned}
\end{equation*}
where 
\begin{equation*}
J_{1}:= l(w)- l_h(w_I),\quad J_{2}:=s_h^0(w_I,\phi_I),\quad J_{3}:=-s_h^0(w_I,\epsilon_h).
\end{equation*}
The source estimate \eqref{eq:source-estimate2} and the interpolation jump estimates \eqref{eq:value-jump} give
\begin{equation*}
|J_{1}+J_{2}+J_{3}| \lesssim h \| w \|_{H^3(\gamma)} \triplenorm{\epsilon_h} + h^2 \| w \|_{H^3(\gamma)} (\|\bm{f}\|_{L^2(\gamma)} + \|\phi\|_{H^3(\gamma)}).
\end{equation*}
Finally, by \eqref{eq:bilinear-approx-equiv} in Lemma \ref{lm:equivalence-consistency}, it holds that
\begin{equation}\label{eq:ahi-a}
|a_h(w_I,\phi_I) - a(w,\phi)| \lesssim h^2\|w\|_{H^3(\gamma)}\|\phi \|_{H^3(\gamma)}.
\end{equation}

Combining the above estimates with the regularity bound \eqref{eq:regularity-bound}, we arrive at
\begin{equation}
|\epsilon_c^\ell |_{H^1(\gamma)} \lesssim  h\triplenorm{\epsilon_h} + h^2\|\phi\|_{H^3(\gamma)} + h^2\|\bm{f}\|_{L^2(\gamma)}.
\end{equation}
By the relative estimate \eqref{eq:sNZT-relative}, the discrete
Poincar\'e inequality \eqref{eq:poincare},
Theorem~\ref{tm:energy-estimate}, and the regularity $\|\phi\|_{H^3(\gamma)} \lesssim \|\bm{f}\|_{L^2(\gamma)}$, we then obtain
\[
\begin{aligned}
|\epsilon_h|_{H^1_h(\Gamma_h)}
&\lesssim
|\epsilon_h-\epsilon_c |_{H^1_h(\Gamma_h)}
+ |\epsilon_c^\ell |_{H^1(\gamma)}
\lesssim
h\triplenorm{\epsilon_h}
+
h^2\|\phi\|_{H^3(\gamma)}
+
h^2\|\bm f\|_{L^2(\gamma)}
\\
&\lesssim
h^2(
\|\phi\|_{H^3(\gamma)}
+
\|\bm f\|_{L^2(\gamma)}) \lesssim h^2 \|\bm f\|_{L^2(\gamma)}.
\end{aligned}
\]
The estimate \eqref{eq:H1-estimate} then follows from the interpolation estimate \eqref{eq:I-L0} and the triangle inequality.
\end{proof}

\begin{remark}[optimal $L^2$ error estimate for the velocity]
The surface velocity is recovered from the stream function through
$\bm u=\curl_\gamma\phi$, and its discrete counterpart is
$\bm u_h:=\curl_{\Gamma_h}\phi_h$. Comparing the two on $\Gamma_h$ via the
Piola transform $\widebreve{\bm u}$ of $\bm u$, the triangle inequality gives
\[
\|\widebreve{\bm u}-\bm u_h\|_{L^2(\Gamma_h)}
\le
\|\widebreve{\curl_\gamma\phi}-\curl_{\Gamma_h}\phi^e\|_{L^2(\Gamma_h)}
+
\|\curl_{\Gamma_h}(\phi^e-\phi_h)\|_{L^2(\Gamma_h)} .
\]
The first term is $\mathcal O(h^2)\|\phi\|_{H^1(\gamma)}$ by
Corollary~\ref{cor:Piola-curl} (pointwise Piola estimate for the curl),
while the second equals $|\phi^e-\phi_h|_{H^1_h(\Gamma_h)}$ through the pointwise
identity $|\curl_{\Gamma_h}\psi|=|\nabla_{\Gamma_h}\psi|$.
Hence Theorem~\ref{tm:H1-estimate} yields the optimal second-order bound
\[
\|\widebreve{\bm u}-\bm u_h\|_{L^2(\Gamma_h)}
\lesssim h^2 \|\bm{f}\|_{L^2(\gamma)} .
\]
\end{remark}

\section{Numerical experiments} \label{sec:numerical}

In this section, we present numerical experiments to validate our theoretical findings. The implementation is based on the \texttt{iFEM} package \cite{chen2008ifem}.
The surface mesh is uniformly refined by subdividing every triangle into four sub-triangles, with the new nodes projected onto the exact surface $\gamma$.
For an error $E_h$, the reported convergence order is computed as $\log_2(E_h/E_{h/2})$ from consecutive refinement levels.
For all three test cases, we prescribe the manufactured velocity and pressure by
\begin{equation*}
    \bm{u}=\curl_\gamma\phi,
    \qquad
    p(x,y,z)=xyz,
\end{equation*}
from which the force $\bm{f}$ is computed from the strong form \eqref{eq:strong_stokes}. In the FE scheme \eqref{eq:FEM} we then take $\bm{f}_h=\bm{f}^e$.

We consider three benchmark problems. The first example is posed on the unit sphere, which is simply connected, with the exact solution $\phi=r^{-3}(3x^2y-y^3)$. The second example is defined on a torus with major radius $R=1$ and minor radius $r=0.6$, where $\phi=\sin(3\varphi)\cos(3\theta+\varphi)$ in toroidal coordinates $(\theta,\varphi)$; here the manufactured solution $\bm u=\curl_\gamma\phi$ is coexact, so the stream-function formulation applies despite the nontrivial topology of the torus. The third example is posed on the general surface implicitly defined by $\rho(x,y,z)=(x-z^2)^2+y^2+z^2-1=0$, with the exact solution $\phi=y$.

\begin{table}[htbp!]
    \centering
    \caption{Numerical errors and convergence rates for the surface Stokes problem on the unit sphere.}
    \label{tab:stokes_sphere}
    \begin{tabular}{@{}lcccccc@{}}
        \toprule
        Dof & $\|\phi^e-\phi_h\|_{L^2(\Gamma_h)}$ & order
            & $|\phi^e-\phi_h|_{H^1_h(\Gamma_h)}$ & order
            & $|\phi^e-\phi_h|_{H^2_h(\Gamma_h)}$ & order \\
        \midrule
          2562 & 6.66e-02 &      & 4.28e-01 &      & 1.23e+01 &      \\
         10242 & 1.61e-02 & 2.05 & 1.10e-01 & 1.96 & 6.29e+00 & 0.96 \\
         40962 & 4.00e-03 & 2.01 & 2.77e-02 & 1.99 & 3.17e+00 & 0.99 \\
        163842 & 9.97e-04 & 2.00 & 6.95e-03 & 2.00 & 1.59e+00 & 1.00 \\
        \bottomrule
    \end{tabular}
\end{table}

\begin{table}[htbp!]
    \centering
    \caption{Numerical errors and convergence rates for a coexact manufactured surface Stokes solution on the torus.}
    \label{tab:stokes_torus}
    \begin{tabular}{@{}lcccccc@{}}
        \toprule
        Dof & $\|\phi^e-\phi_h\|_{L^2(\Gamma_h)}$ & order
            & $|\phi^e-\phi_h|_{H^1_h(\Gamma_h)}$ & order
            & $|\phi^e-\phi_h|_{H^2_h(\Gamma_h)}$ & order \\
        \midrule
          4096 & 6.59e-01 &      & 3.92e+00 &      & 7.85e+01 &      \\
         16384 & 1.92e-01 & 1.78 & 1.58e+00 & 1.31 & 5.40e+01 & 0.54 \\
         65536 & 5.08e-02 & 1.92 & 4.86e-01 & 1.70 & 3.17e+01 & 0.77 \\
        262144 & 1.29e-02 & 1.98 & 1.30e-01 & 1.90 & 1.67e+01 & 0.92 \\
        \bottomrule
    \end{tabular}
\end{table}

\begin{table}[htbp!]
    \centering
    \caption{Numerical errors and convergence rates for the surface Stokes problem on the general surface.}
    \label{tab:stokes_general}
    \begin{tabular}{@{}lcccccc@{}}
        \toprule
        Dof & $\|\phi^e-\phi_h\|_{L^2(\Gamma_h)}$ & order
            & $|\phi^e-\phi_h|_{H^1_h(\Gamma_h)}$ & order
            & $|\phi^e-\phi_h|_{H^2_h(\Gamma_h)}$ & order \\
        \midrule
          4610 & 2.00e-02 &      & 9.65e-02 &      & 2.89e+00 &      \\
         18434 & 5.36e-03 & 1.90 & 2.75e-02 & 1.81 & 1.63e+00 & 0.83 \\
         73730 & 1.39e-03 & 1.95 & 7.36e-03 & 1.90 & 8.75e-01 & 0.90 \\
        294914 & 3.55e-04 & 1.97 & 1.90e-03 & 1.96 & 4.52e-01 & 0.95 \\
        \bottomrule
    \end{tabular}
\end{table}

\begin{figure}[htbp]
    \centering
    \subcaptionbox{Torus.\label{fig:flow_torus}}{%
        \includegraphics[width=0.50\textwidth]{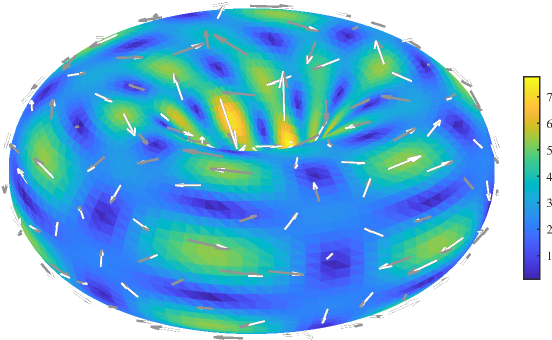}}%
    \hspace{0.06\textwidth}
    \subcaptionbox{General surface.\label{fig:flow_general}}{%
        \includegraphics[width=0.40\textwidth]{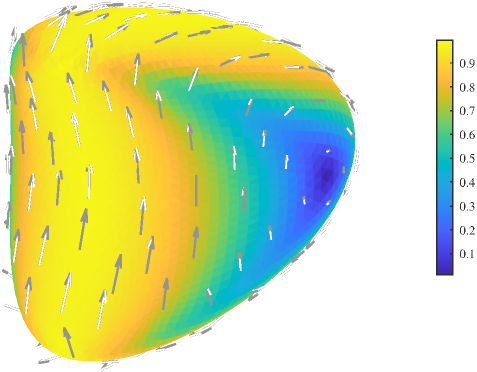}}
    \caption{Computed discrete velocity fields $\bm{u}_h=\curl_{\Gamma_h}\phi_h$. The surface color represents $|\bm{u}_h|$, and the arrows indicate the velocity direction.}
    \label{fig:flow_fields}
\end{figure}

The convergence history for the three benchmarks is reported in Tables~\ref{tab:stokes_sphere}, \ref{tab:stokes_torus}, and~\ref{tab:stokes_general}, and the recovered velocity fields for the torus and the general surface are shown in Figure~\ref{fig:flow_fields}. All three examples exhibit the asymptotic orders $2$, $2$, and $1$ in the $L^2$, broken $H^1$, and broken $H^2$ errors, respectively. For the sphere and the general surface, these results agree with the first-order broken $H^2$ estimate in Theorem~\ref{tm:energy-estimate} (energy norm error estimate) and the second-order broken $H^1$ estimate in Theorem~\ref{tm:H1-estimate} (broken $H^1$ error estimate). The torus results confirm that the same rates persist for the coexact manufactured solution.

To examine the role of the stabilization, let $\phi_h^0$ denote the discrete solution of the sphere problem obtained with the same load functional $l_h$ but after removing $s_h^0$. We also define
\begin{equation*}
\begin{aligned}
\lambda_{\min,h}^{s}
&:=\min_{0\ne v_h\in\mathring{V}_h}
\frac{a_h(v_h,v_h)+s_h^0(v_h,v_h)}
{\|\nabla_{\Gamma_h}^2v_h\|_{L^2(\Gamma_h)}^2
 +\|\nabla_{\Gamma_h}v_h\|_{L^2(\Gamma_h)}^2},\\
\lambda_{\min,h}^{0}
&:=\min_{0\ne v_h\in\mathring{V}_h}
\frac{a_h(v_h,v_h)}
{\|\nabla_{\Gamma_h}^2v_h\|_{L^2(\Gamma_h)}^2
 +\|\nabla_{\Gamma_h}v_h\|_{L^2(\Gamma_h)}^2}.
\end{aligned}
\end{equation*}

Table~\ref{tab:stokes_sphere_stabilization} shows that $\lambda_{\min,h}^{0}$ decreases by approximately a factor of four under each refinement, whereas $\lambda_{\min,h}^{s}$ remains of the same magnitude. This behavior is consistent with the discrete Korn inequality in Lemma~\ref{lem:discrete_korn}: after $s_h^0$ is removed, the coercivity of $a_h$ relative to the broken $H^2$ norm deteriorates as the mesh is refined. Although the unstabilized system remains solvable on each fixed mesh, its broken $H^1$ errors stagnate around $1.5\mathrm{e}{+01}$, while its broken $H^2$ errors approximately double under each refinement. Thus, the unstabilized discrete solutions do not converge to the exact solution. This is in sharp contrast to the optimal rates obtained with the stabilization in Table~\ref{tab:stokes_sphere}.

\begin{table}[htbp]
    \centering
    \caption{Smallest generalized eigenvalues and errors of the unstabilized method for the manufactured surface Stokes problem on the unit sphere.}
    \label{tab:stokes_sphere_stabilization}
    \begin{tabular}{@{}lcccc@{}}
        \toprule
        Dof & $\lambda_{\min,h}^{s}$ & $\lambda_{\min,h}^{0}$
            & $|\phi^e-\phi_h^0|_{H_h^1(\Gamma_h)}$
            & $|\phi^e-\phi_h^0|_{H_h^2(\Gamma_h)}$ \\
        \midrule
           642 & 3.9161e-02 & 5.5632e-03 & 1.5307e+01 & 2.0907e+02 \\
          2562 & 3.3681e-02 & 1.4432e-03 & 1.5567e+01 & 4.1812e+02 \\
         10242 & 3.2203e-02 & 3.6526e-04 & 1.5630e+01 & 8.3606e+02 \\
        \bottomrule
    \end{tabular}
\end{table}

\section{Conclusion}\label{sec:conclusion}
We have developed and analyzed a primal nonconforming discretization of the surface stream-function Stokes problem based on the surface Morley element. The scheme is intrinsic to the discrete surface, uses no discrete Gaussian-curvature approximation, and involves no tunable penalty parameter. Under the minimal regularity assumptions it attains optimal-order convergence in the broken $H^2$ and $H^1$ norms and a second-order $L^2$ estimate for the recovered velocity.
The core of the analysis, and the main contribution of this work, is the geometric-consistency machinery of Section~\ref{sec:geo-error}. Its foundation is a single normal-separated estimate (Theorem~\ref{thm:normal-separated}): expressing a surface integrand as a function of the surface normal and Taylor expanding in that variable exposes an integral cancellation in the first-order term, so that the pointwise first-order error of the discrete normal does not degrade the consistency order. 

This one estimate resolves a broad class of geometric errors within a common framework. It recovers the classical $\bP_h\bnu$-type estimate as a special case (Corollary~\ref{cor:Phn}), yields superconvergence for first-order tangential forms (Corollary~\ref{cor:first-order-tangential}), and delivers the second-order interior and boundary consistency of the tensorial forms produced by the stream-function formulation---the trace-free Hessian, the Stokes-type tensor Green identity, and the conormal fluxes (Lemmas~\ref{lem:neilan-volume-estimate}, \ref{lem:stokes-curl-div}, and \ref{lem:weak-conormal-flux}, together with their corollaries). The error analysis of the Morley scheme is then, in large part, an application of these estimates.

The normal-separated argument is not tied to the stream-function Stokes problem or to the Morley element: it applies to any surface finite element form whose geometric error enters through the discrete normal. Extending the mechanism to higher-order geometric quantities, in particular to the discrete curvature, is the subject of work in preparation.

\section*{Statements and Declarations}

\paragraph{Funding.}
This work was supported by the National Natural Science Foundation of China under Grant Nos.~12571383 and 12288101.

\paragraph{Competing interests.}
The authors have no relevant financial or non-financial interests to disclose.

\appendix
\section{Proof of Lemma \ref{lm:Pnjump}} \label{sec:appendix_proof}
\begin{proof}
Let $\bm{t}_e$ be the unit tangent vector along $e = \partial K_1 \cap \partial K_2$, oriented such that the discrete outward co-normals are $\bm{n}_1 = \bm{t}_e \times \bm{\nu}_1$ and $\bm{n}_2 = -\bm{t}_e \times \bm{\nu}_2$. The jump is $\llbracket \bn_h \rrbracket = \bm{t}_e \times \bm{d}$, where $\bm{d} := \bm{\nu}_1 - \bm{\nu}_2$. Decomposing $\bm{d}$ into its exact normal and tangential components, $\bm{d} = (\bm{d} \cdot \bm{\nu}) \bm{\nu} + \mathbf{P}\bm{d}$, yields
\begin{equation}\label{eq:P_jump_decomp}
\mathbf{P}\llbracket \bn_h \rrbracket = (\bm{d} \cdot \bm{\nu}) \mathbf{P}(\bm{t}_e \times \bm{\nu}) + \mathbf{P}(\bm{t}_e \times \mathbf{P}\bm{d}).
\end{equation}

For the first term, using the identity $2\bm{\nu}_i \cdot \bm{\nu} = 2 - |\bm{\nu}_i - \bm{\nu}|^2$ and the approximation property $|\bm{\nu}_i - \bm{\nu}| \lesssim h$, we obtain $|\bm{d} \cdot \bm{\nu}| = \frac{1}{2}\big| |\bm{\nu}_2 - \bm{\nu}|^2 - |\bm{\nu}_1 - \bm{\nu}|^2 \big| \lesssim h^2$. Since $|\mathbf{P}(\bm{t}_e \times \bm{\nu})| \le 1$, this term is bounded by $\mathcal{O}(h^2)$. 

For the second term, we decompose $\bm{t}_e = \mathbf{P}\bm{t}_e + (\bm{t}_e \cdot \bm{\nu})\bm{\nu}$. Because $\mathbf{P}\bm{t}_e$ and $\mathbf{P}\bm{d}$ are both purely tangential to $\gamma$, their cross product is parallel to $\bm{\nu}$, which implies $\mathbf{P}(\mathbf{P}\bm{t}_e \times \mathbf{P}\bm{d}) = \bm{0}$. Consequently,
$$
|\mathbf{P}(\bm{t}_e \times \mathbf{P}\bm{d})| = \big| \mathbf{P} \big( (\bm{t}_e \cdot \bm{\nu})(\bm{\nu} \times \mathbf{P}\bm{d}) \big) \big| \le |\bm{t}_e \cdot \bm{\nu}| |\mathbf{P}\bm{d}| \lesssim h^2,
$$
where we utilized $|\mathbf{P}\bm{d}| \le |\bm{d}| \lesssim h$ and $|\bm{t}_e \cdot \bm{\nu}| \lesssim h$. Substituting both bounds back into \eqref{eq:P_jump_decomp} completes the proof.
\end{proof}

\section{Proof of Corollary \ref{cor:Piola-curl}} \label{sec:appendix_piola_curl}
\begin{proof}
Write $\bm a:= (\nabla_\gamma w)^e$ and $\bm e:=\bm\nu_h-\bm\nu$. Since $\bm\nu$ and $\bm\nu_h$ are unit vectors, we have $|\bm{e}| \lesssim h$, $ |1-\bm\nu\cdot\bm\nu_h| \lesssim h^2$, and $|\bm\nu \cdot\bm e| = \tfrac12|\bm e|^2\lesssim h^2$. Moreover, $|\bnu_h \times \bnu| = |\bnu_h \times \bm e| \lesssim h$.

Since $\curl_{\Gamma_h}w^e=\bm\nu_h\times\nabla_{\Gamma_h}w^e$, Lemma~\ref{lm:Piola-derivative} gives
\begin{equation}\label{eq:piola-curl-step1}
\curl_{\Gamma_h}w^e
=\bm\nu_h\times\widebreve{\nabla_\gamma w}
+\bm\nu_h\times\big(\nabla_{\Gamma_h}w^e-\widebreve{\nabla_\gamma w}\big)
=\bm\nu_h\times\widebreve{\nabla_\gamma w}+\mathcal O(h^2)|\bm a|.
\end{equation}
It remains to compare $\widebreve{\curl_\gamma w}$ with $\bm\nu_h\times\widebreve{\nabla_\gamma w}$.
From the Piola formula~\eqref{def:Piola_invp} together with $|d|,\,|1-\mu_h|,\,|1-\bm\nu^e\cdot\bm\nu_h|\lesssim h^2$, every tangential field $\bm b$ on $\gamma$ satisfies
\[
\breve{\bm{b}} = \mathscr P_{\bm p^{-1}}\bm b=\bm b^e-\bm\nu(\bm e\cdot\bm b^e)+\mathcal O(h^2)|\bm b|.
\]
Applying this respectively to the tangential fields $\bm a$ and $\bm\nu \times\bm a$, and using $(\curl_\gamma w)^e=\bm\nu \times\bm a$, we have
\[
\begin{aligned}
\widebreve{\nabla_\gamma w} &= \bm{a} - \bnu (\bm{e} \cdot \bm{a}) + \mathcal O(h^2)|\bm a|, \\
\widebreve{\curl_\gamma w} &= \bm\nu \times\bm a - \bm\nu \big(\bm e\cdot(\bm\nu \times\bm a)\big)+\mathcal O(h^2)|\bm a|.
\end{aligned}
\]
Consequently, 
$$
\begin{aligned}
\bm\nu_h\times\widebreve{\nabla_\gamma w} &= \bm\nu_h\times\bm a - \bnu_h \times \bnu (\bm{e} \cdot \bm{a}) + \mathcal O(h^2)|\bm a| \\
&= \bnu_h\times\bm a + \mathcal O(h^2)|\bm a|
=\bm\nu \times\bm a + \bm e\times\bm a+\mathcal O(h^2)|\bm a|.
\end{aligned}
$$
Then, by subtracting, we obtain
\[
\widebreve{\curl_\gamma w} - \bm\nu_h\times\widebreve{\nabla_\gamma w}
=-\bm e\times\bm a-\bm\nu \big(\bm e\cdot(\bm\nu \times\bm a)\big)+\mathcal O(h^2)|\bm a|
=-\bP(\bm e\times\bm a)+\mathcal O(h^2)|\bm a|.
\]
Finally, $|\bnu \cdot \bm{e}| \lesssim h^2$ implies that $\bm e$ is tangential to $\gamma$ up to $\mathcal O(h^2)$, so the cross product $\bm e\times\bm a$ is normal up to $\mathcal O(h^2)$, whence $|\bP(\bm e\times\bm a)|\lesssim h^2|\bm a|$. Combining this with~\eqref{eq:piola-curl-step1} proves~\eqref{eq:Piola-curl}.
\end{proof}

    


\bibliographystyle{unsrt}
\bibliography{surface}

\begin{thebibliography}{10}

\bibitem{jankuhn2018incompressible}
Thomas Jankuhn, Maxim~A. Olshanskii, and Arnold Reusken.
\newblock Incompressible fluid problems on embedded surfaces: Modeling and
  variational formulations.
\newblock {\em Interfaces and Free Boundaries}, 20(3):353--377, 2018.

\bibitem{dziuk2013finite}
Gerhard Dziuk and Charles~M Elliott.
\newblock Finite element methods for surface {PDEs}.
\newblock {\em Acta Numerica}, 22:289--396, 2013.

\bibitem{olshanskii2018finite}
Maxim~A. Olshanskii, Annalisa Quaini, Arnold Reusken, and Vladimir Yushutin.
\newblock A finite element method for the surface {Stokes} problem.
\newblock {\em SIAM Journal on Scientific Computing}, 40(4):A2492--A2518, 2018.

\bibitem{jankuhn2021error}
Thomas Jankuhn, Maxim~A. Olshanskii, Arnold Reusken, and Alexander Zhiliakov.
\newblock Error analysis of higher order trace finite element methods for the
  surface {Stokes} equation.
\newblock {\em Journal of Numerical Mathematics}, 29(3):245--267, 2021.

\bibitem{brandner2022finite}
Philip Brandner, Thomas Jankuhn, Simon Praetorius, Arnold Reusken, and Axel
  Voigt.
\newblock Finite element discretization methods for velocity--pressure and
  stream function formulations of surface {Stokes} equations.
\newblock {\em SIAM Journal on Scientific Computing}, 44(4):A1807--A1832, 2022.

\bibitem{hardering2024parametric}
Hanne Hardering and Simon Praetorius.
\newblock Parametric finite-element discretization of the surface {Stokes}
  equations: inf-sup stability and discretization error analysis.
\newblock {\em IMA Journal of Numerical Analysis}, 45(5):2948--2987, 2025.

\bibitem{bonito2020divergence}
Andrea Bonito, Alan Demlow, and Martin Licht.
\newblock A divergence-conforming finite element method for the surface
  {S}tokes equation.
\newblock {\em SIAM Journal on Numerical Analysis}, 58(5):2764--2798, 2020.

\bibitem{lederer2020divergence}
Philip~L Lederer, Christoph Lehrenfeld, and Joachim Sch{\"o}berl.
\newblock Divergence-free tangential finite element methods for incompressible
  flows on surfaces.
\newblock {\em International Journal for Numerical Methods in Engineering},
  121(11):2503--2533, 2020.

\bibitem{demlow2024tangential}
Alan Demlow and Michael Neilan.
\newblock A tangential and penalty-free finite element method for the surface
  {S}tokes problem.
\newblock {\em SIAM Journal on Numerical Analysis}, 62(1):248--272, 2024.

\bibitem{demlow2025taylor}
Alan Demlow and Michael Neilan.
\newblock A {Taylor}--{Hood} finite element method for the surface {Stokes}
  problem without penalization.
\newblock {\em SIAM Journal on Numerical Analysis}, 64(2):565--600, 2026.

\bibitem{nochetto2025surface}
Ricardo~H. Nochetto and Mansur Shakipov.
\newblock Surface {Stokes} without inf-sup condition.
\newblock arXiv preprint arXiv:2508.13342, 2025.

\bibitem{reusken2020stream}
Arnold Reusken.
\newblock Stream function formulation of surface {S}tokes equations.
\newblock {\em IMA Journal of Numerical Analysis}, 40(1):109--139, 2020.

\bibitem{brueers2026releasing}
Tim Br{\"u}ers, Christoph Lehrenfeld, Tim van Beeck, and Max Wardetzky.
\newblock Releasing the pressure: High-order surface flow discretizations via
  discrete {Helmholtz}--{Hodge} decompositions.
\newblock arXiv preprint arXiv:2603.27714, 2026.

\bibitem{brandner2020finite}
Philip Brandner and Arnold Reusken.
\newblock Finite element error analysis of surface {Stokes} equations in stream
  function formulation.
\newblock {\em ESAIM: Mathematical Modelling and Numerical Analysis},
  54(6):2069--2097, 2020.

\bibitem{neilan2024c}
Michael Neilan and Hongzhi Wan.
\newblock A {$C^0$} interior penalty method for the stream function formulation
  of the surface {Stokes} problem.
\newblock {\em ESAIM: Mathematical Modelling and Numerical Analysis},
  59(2):1177--1211, 2025.

\bibitem{larsson2017continuous}
Karl Larsson and Mats Larson.
\newblock A continuous/discontinuous {G}alerkin method and a priori error
  estimates for the biharmonic problem on surfaces.
\newblock {\em Mathematics of Computation}, 86(308):2613--2649, 2017.

\bibitem{cai2024continuous}
Ying Cai, Hailong Guo, and Zhimin Zhang.
\newblock Continuous linear finite element method for biharmonic problems on
  surfaces.
\newblock {\em Journal of Scientific Computing}, 107:39, 2026.

\bibitem{elliott2019second}
Charles Elliott, Hans Fritz, and Graham Hobbs.
\newblock Second order splitting for a class of fourth order equations.
\newblock {\em Mathematics of Computation}, 88(320):2605--2634, 2019.

\bibitem{stein2019mixed}
Oded Stein, Eitan Grinspun, Alec Jacobson, and Max Wardetzky.
\newblock A mixed finite element method with piecewise linear elements for the
  biharmonic equation on surfaces.
\newblock {\em arXiv preprint arXiv:1911.08029}, 2019.

\bibitem{walker2022kirchhoff}
Shawn~W Walker.
\newblock The {K}irchhoff plate equation on surfaces: the surface
  {H}ellan--{H}errmann--{J}ohnson method.
\newblock {\em IMA Journal of Numerical Analysis}, 42(4):3094--3134, 2022.

\bibitem{morley1968triangular}
L.~S.~D. Morley.
\newblock The triangular equilibrium element in the solution of plate bending
  problems.
\newblock {\em The Aeronautical Quarterly}, 19(2):149--169, 1968.

\bibitem{wang2006morley}
Ming Wang and Jinchao Xu.
\newblock The {Morley} element for fourth order elliptic equations in any
  dimensions.
\newblock {\em Numerische Mathematik}, 103(1):155--169, 2006.

\bibitem{wu2025stabilized}
Shuonan Wu and Hao Zhou.
\newblock A stabilized nonconforming finite element method for the surface
  biharmonic problem.
\newblock {\em SIAM Journal on Numerical Analysis}, 63(4):1642--1665, 2025.

\bibitem{Yakov2025blowup}
Yakov {Berchenko-Kogan} and Evan~S. Gawlik.
\newblock Blow-up {{Whitney}} forms, shadow forms, and {{Poisson}} processes.
\newblock {\em Results in Applied Mathematics}, 25:100529, 2025.

\bibitem{demlow2009higher}
Alan Demlow.
\newblock Higher-order finite element methods and pointwise error estimates for
  elliptic problems on surfaces.
\newblock {\em SIAM Journal on Numerical Analysis}, 47(2):805--827, 2009.

\bibitem{bonito2020finite}
Andrea Bonito, Alan Demlow, and Ricardo~H Nochetto.
\newblock Finite element methods for the {L}aplace--{B}eltrami operator.
\newblock In {\em Handbook of Numerical Analysis}, volume~21, pages 1--103.
  Elsevier, 2020.

\bibitem{dziuk1988finite}
Gerhard Dziuk.
\newblock {\em Finite elements for the {Beltrami} operator on arbitrary
  surfaces}.
\newblock Springer, 1988.

\bibitem{hansbo2020analysis}
Peter Hansbo, Mats~G Larson, and Karl Larsson.
\newblock Analysis of finite element methods for vector laplacians on surfaces.
\newblock {\em IMA Journal of Numerical Analysis}, 40(3):1652--1701, 2020.

\bibitem{chen2008ifem}
Long Chen.
\newblock i{FEM}: an innovative finite element methods package in {MATLAB}.
\newblock {\em Preprint, University of Maryland}, 2008.

\end{thebibliography}

\end{document}